\theoremstyle{definition}
\newtheorem{definition}{Definition}[section]
\newtheorem{example}[definition]{Example}
\newtheorem{remark}[definition]{Remark}
\theoremstyle{plain}
\newtheorem{lemma}[definition]{Lemma}
\newtheorem{corollary}[definition]{Corollary}
\newtheorem{theorem}[definition]{Theorem}
\newtheorem{fact}[definition]{Fact}
\Crefname{fact}{Fact}{Facts}
\theoremstyle{remark}
\newtheorem{claim}{Claim}
\Crefname{claim}{Claim}{Claims}
\newenvironment{claimproof}[1][Proof of Claim]{\begin{proof}[#1] }{ \end{proof}}
\setlist[enumerate, 1]{font=\upshape, noitemsep, nolistsep}
\setlist[enumerate, 2]{font=\upshape, noitemsep, nolistsep}
\setlist[itemize, 1]{noitemsep, nolistsep,font=\upshape}
\setlist[itemize, 2]{noitemsep, nolistsep,font=\upshape}
\newcommand{\abs}[1]{\left| #1 \right|}
\renewcommand\phi\varphi
\renewcommand\epsilon\varepsilon
\DeclareMathOperator{\soe}{soe}
\DeclareMathOperator{\spn}{span}
\DeclareMathOperator{\tr}{tr}
\DeclareMathOperator{\id}{id}
\DeclareMathOperator{\gca}{ca}
\renewcommand{\vec}{\boldsymbol}
\newcommand{\matvec}{\mathsf}
\newcommand{\NN}{\mathbb{N}}
\newcommand{\calB}{\mathcal{B}}
\newcommand{\allones}{\boldsymbol{1}}
\DeclareMathOperator{\End}{End}
\newsavebox{\fminibox}
\newlength{\fminilength}
\newenvironment{fminipage}[1][\linewidth]{
  \setlength{\fminilength}{#1-2\fboxsep-2\fboxrule}\begin{lrbox}{\fminibox}\begin{minipage}{\fminilength}}{
    \end{minipage}\end{lrbox}\noindent\fbox{\usebox{\fminibox}}
}
\newcommand{\Fiso}[1][]{\mathsf{F}_{\textnormal{iso}}^{#1}}
\newcommand{\Liso}[1][]{\mathsf{L}^{#1}_{\textnormal{iso}}}
\newcommand{\PW}{\mathsf{PW}}
\newcommand{\norm}[1]{\left\lVert #1 \right\rVert}
\begin{document}

	\begin{frontmatter}[classification=text]

\title{Homomorphism Tensors and\\ Linear Equations\titlefootnote{This work was presented at the \textit{49th International Colloquium on Automata, Languages, and Programming} (\textsmaller{ICALP}~2022)~\cite{grohe_homomorphism_icalp}. Views and opinions expressed are however those of the author(s) only and do not necessarily reflect those of the European Union or the European Research Council Executive Agency. Neither the European Union nor the granting authority can be held responsible for them.}} 

\author[magr]{Martin Grohe\thanks{Supported by the European Union (\textsmaller{ERC}, SymSim, 101054974).}}
		\author[gara]{Gaurav Rattan\thanks{Supported by the European Research Council (\textsmaller{ERC}) under the European Union's Horizon 2020 research and innovation programme (EngageS: grant agreement No. 820148) and by the German Research Foundation \textsmaller{DFG} (Research Grants Program, \textsmaller{RA} 3242/1-1 via project number 411032549).}}
		\author[tise]{Tim Seppelt\thanks{Supported by the German Research Foundation (\textsmaller{DFG}) within Research Training Group 2236/2 (\textsmaller{UnRAVeL}) and by the European Union (\textsmaller{ERC}, SymSim, 101054974, CountHom, 101077083).}}
		
\begin{abstract}
			Lovász~(1967) showed that two graphs $G$ and $H$ are isomorphic if and only if
			they are \emph{homomorphism indistinguishable} over the class of all graphs, i.e.\@ for every graph $F$, the number of homomorphisms from~$F$ to~$G$ equals the number of homomorphisms from~$F$ to~$H$. Recently, homomorphism indistinguishability over restricted classes of graphs such as bounded treewidth, bounded treedepth and planar graphs, has emerged as a surprisingly powerful framework for capturing diverse equivalence relations on graphs arising from logical equivalence and algebraic equation systems. 
			
			In this paper, we provide a unified algebraic framework for such results by examining the linear-algebraic structure of tensors counting homomorphisms from labelled graphs. The existence of certain linear transformations between such homomorphism tensor subspaces can be interpreted both as homomorphism indistinguishability over a graph class and as feasibility of an equational system. Following this framework, we obtain characterisations of homomorphism indistinguishability over several natural graph classes, namely trees of bounded degree, graphs of bounded pathwidth (answering a question of Dell et~al.\@ (2018)), and graphs of bounded treedepth.
		\end{abstract}
	\end{frontmatter}

\section{Introduction}

Representations in terms of homomorphism counts provide a surprisingly rich view on graphs and their properties. Homomorphism counts have direct connections to logic~\cite{dvorak_recognizing_2010,Grohe20,lovasz_operations_1967}, category theory~\cite{dawar_lovasz_2021,MontacuteS21}, the graph isomorphism problem~\cite{dell_lovasz_2018,dvorak_recognizing_2010,lovasz_operations_1967}, algebraic characterisations of graphs~\cite{dell_lovasz_2018}, and quantum groups~\cite{mancinska_quantum_2020}. Counting subgraph patterns in graphs has a wide range of applications, for example in graph kernels (see \cite{KriegeJM20}) and motif counting (see~\cite{AlonDHHS08,milo2002network}). Homomorphism counts can be used as a flexible basis for counting all kinds of substructures \cite{CurticapeanDM17}, and their complexity has been studied in great detail (e.g.~\cite{Bulatov13,BulatovZ20,CurticapeanDM17,RothW20}). It has been argued in \cite{Grohe-x2vec} that homomorphism counts are well-suited as a theoretical foundation for analysing graph embeddings and machine learning techniques on graphs, both indirectly through their connection with graph neural networks via the Weisfeiler--Leman algorithm \cite{dvorak_recognizing_2010,MorrisRFHLRG19,XuHLJ19} and directly as features for machine learning on graphs. The latter has also been confirmed experimentally \cite{BeaujeanSY21,Kuehner21,NguyenM20}.

The starting point of the theory is an old result due to Lov\'asz~\cite{lovasz_operations_1967}: two
graphs $G$, $H$ are isomorphic if and only if for every graph $F$, the
number $\hom(F,G)$ of homomorphisms from $F$ to $G$ equals
$\hom(F,H)$. For a class $\mathcal F$ of graphs, we say that $G$ and
$H$ are \emph{homomorphism indistinguishable} over $\mathcal F$ if and
only if $\hom(F,G)=\hom(F,H)$ for all $F\in\mathcal F$. A beautiful picture that has only emerged in the last few years shows that homomorphism indistinguishability over natural graph classes, such as paths, trees, or planar graphs, characterises a variety of natural equivalence relations on graphs.

Broadly speaking, there are two types of such results, the first relating homomorphism indistinguishability to logical equivalence, and the second giving algebraic characterisations of homomorphism equivalence derived from systems of linear (in)equalities for graph isomorphism. Examples of logical characterisations of homomorphism equivalence are the characterisation of homomorphism indistinguishability over graphs of treewidth at most~$k$ in terms of the $(k+1)$-variable fragment of first-order logic with counting \cite{dvorak_recognizing_2010} and the characterisation of homomorphism indistinguishability over graphs of treedepth at most~$k$ in terms of the quantifier-rank-$k$ fragment of first-order logic with counting \cite{Grohe20}. Results of this type have also been described in a general category-theoretic framework \cite{dawar_lovasz_2021, MontacuteS21}. Examples of equational characterisations are the characterisation of homomorphism indistinguishability over trees in terms of fractional isomorphism \cite{dell_lovasz_2018,dvorak_recognizing_2010,tinhofer_note_1991}, which may be viewed as the \textsmaller{LP} relaxation of a natural \textsmaller{ILP} for graph isomorphism, and a generalisation to homomorphism indistinguishability over graph of bounded treewidth in terms of the Sherali--Adams hierarchy over that \textsmaller{ILP} \cite{atserias_sherali-adams_2013,dell_lovasz_2018,grohe_pebble_2015,dvorak_recognizing_2010,mal14}. Further examples include a characterisation of homomorphism indistinguishability over paths in terms of the same system of equalities by dropping the non-negativity constraints of fractional isomorphism \cite{dell_lovasz_2018}, and a characterisation of homomorphism indistinguishability over planar graphs in terms of quantum isomorphism~\cite{mancinska_quantum_2020}. Remarkably, quantum isomorphism is derived from interpreting the same system of linear equations over C*-algebras \cite{AtseriasMRS19}.

\subsection{Results}
Two questions that remained open in \cite{dell_lovasz_2018} are (1) whether the equational characterisation of homomorphism indistinguishability over paths can be generalised to graphs of bounded pathwidth in a similar way as the characterisation of homomorphism indistinguishability over trees can be generalised to graphs of bounded treewidth, and (2) whether homomorphism indistinguishability over graphs of bounded degree suffices to characterise graphs up to isomorphism. In this paper, we answer the first question affirmatively. 

\begin{theorem}\label{thm:pw-informal}
For every $k \geq 1$, the following are equivalent for simple graphs $G$ and $H$:
\begin{enumerate}
\item $G$ and $H$ are homomorphism indistinguishable over the graphs of pathwidth at most~$k$,
\item the level-$(k+1)$ relaxation $\Liso[k+1](G, H)$ of the standard \textsmaller{ILP} for graph isomorphism has a rational solution.
\end{enumerate}
\end{theorem}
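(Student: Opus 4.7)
My plan is to work within the homomorphism tensor framework outlined in the abstract. For a $k$-labelled graph $\vec F$ (an unlabelled graph together with a tuple of $k$ distinguished vertices, repetitions allowed) and a host graph $G$, let $\matvec T_G(\vec F) \in \RR^{V(G)^k}$ be the tensor whose $\vec v$-entry counts homomorphisms $F \to G$ sending the $i$-th labelled vertex to $v_i$. Summing along the diagonal recovers $\hom(F, G)$. The general idea is to identify a family of operations on $(k+1)$-labelled graphs whose closure generates precisely the pathwidth-$k$ graphs, and then to show that the existence of a linear isomorphism between the spans of $\{\matvec T_G(\vec F)\}$ and $\{\matvec T_H(\vec F)\}$ compatible with these operations is equivalent to solvability of $\Liso[k+1](G,H)$.

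The first step is combinatorial: characterise $(k+1)$-labelled graphs of pathwidth at most $k$ via an inductive \emph{non-branching} construction that mirrors sweeping along a path decomposition. Starting from atomic labelled graphs (a single labelled vertex or edge), one allows edge addition between existing labels, relabelling or forgetting a label, and extension by a fresh vertex attached to some subset of labels. Crucially, the construction must forbid any operation that combines two independently built labelled graphs in parallel, since such a combination would correspond to branching in the decomposition and leave the pathwidth regime. I would verify, by scanning a path decomposition from one end to the other, that every pathwidth-$k$ graph is obtained in this way, and conversely that no such construction produces a graph of pathwidth exceeding $k$.

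The second, algebraic step is to apply the general equivalence developed earlier in the paper: the existence of a linear map between homomorphism tensor subspaces intertwining a prescribed family of operations is equivalent to solvability of a corresponding linear equation system. For the direction (1) $\Rightarrow$ (2), homomorphism indistinguishability over pathwidth-$k$ graphs yields such an intertwiner, from which a rational solution to $\Liso[k+1](G,H)$ can be read off. For the converse, a solution to $\Liso[k+1](G,H)$ supplies an intertwiner with which $\hom(F, G) = \hom(F, H)$ is proved by induction on the construction of $F$. I expect the main obstacle to be pinning down exactly why the non-negativity-free system $\Liso$ corresponds to the non-branching operations while the Sherali--Adams system $\Fiso$ (with non-negativity) accommodates the full branching needed for treewidth. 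Intuitively, non-negativity enforces that intermediate quantities arise from convex combinations of actual partial isomorphisms, which is what permits parallel gluing of two labelled graphs; making this correspondence airtight between the combinatorial and equational sides is the crux of the argument, and will likely require verifying that the purely linear tensor span generated by the non-branching operations does not secretly contain a parallel-composition-like operation arising from linear combinations.
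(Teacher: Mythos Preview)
Your combinatorial step is essentially what the paper does: the ``non-branching'' operations you describe are precisely the $k$-basal graphs (identity, adjacency, identification, forgetting), and the observation that series composition of these generates exactly the $(k{+}1,k{+}1)$-bilabelled graphs of pathwidth at most $k$ is \cref{lem:basal}/\cref{prop:paths}. Your instinct that (2)$\Rightarrow$(1) proceeds by induction along such a construction is also correct and was already known from \cite{DellGR18}.

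The gap is in the other direction. You write that ``homomorphism indistinguishability over pathwidth-$k$ graphs yields such an intertwiner,'' deferring to a ``general equivalence developed earlier in the paper,'' but that equivalence \emph{is} the heart of the argument and is not a black box you can assume. Concretely, homomorphism indistinguishability gives you $\soe(w_{\matvec A}) = \soe(w_{\matvec B})$ for every word $w$ in the basal matrices; what you need is a pseudo-stochastic $X$ with $X A_i = B_i X$ for all $i$. This is a sum-of-entries analogue of the Specht--Wiegmann theorem (\cref{thm:soe}), and its proof goes through representation theory of involution monoids: one passes to the subrepresentation generated by the all-ones vector, uses semisimplicity to split off a complement, and builds $X$ via a Gram--Schmidt argument on that subrepresentation (\cref{lem:invmodsoe}). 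None of this is visible in your plan. Moreover, once you have $X$ satisfying the $\PW^{k+1}$ system, ``reading off'' an $\Liso[k+1]$ solution is itself nontrivial: it requires symmetrising $X$ over $\mathfrak S_{k+1}$ and then inductively marginalising to obtain variables indexed by partial bijections of all sizes $\le k{+}1$ (\cref{lem:symmetrisation,lem:induction}). Finally, the obstacle you flag --- that linear combinations of non-branching operations might secretly encode parallel composition --- is a red herring: the Schur product is simply not expressible in the matrix algebra generated by the basal graphs, and the correspondence between non-negativity and Schur closure is a separate (and easier) story handled in \cref{cor:trees}/\cref{thm:schur}, not the crux of \cref{thm:pw-informal}.
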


The detailed description of the system $\Liso[k+1](G, H)$ is provided in \cref{sec:lkiso}.
In fact, we also devise an alternative system of linear equations $\PW^{k+1}(G,H)$ characterising homomorphism indistinguishability over graphs of pathwidth at most~$k$. The definition of this system turns out to be very natural from the perspective of homomorphism counting, and as we explain later, it forms a fruitful instantiation of a more general representation-theoretic framework for homomorphism indistinguishability. 

Moreover, we obtain an equational characterisation of homomorphism indistinguishability over graphs of bounded treedepth. The resulting system $\mathsf{TD}^k(G, H)$ is very similar to $\Liso[k](G, H)$ and $\PW^k(G,H)$, except that variables are indexed by (ordered) $k$-tuples of variables rather than sets of at most $k$ variables, which reflects the order induced by the recursive definition of treedepth. 

\begin{restatable}{theorem}{treedepth}\label{thm:treedepth}
For every $k \geq 1$, the following are equivalent for simple graphs $G$ and $H$:
\begin{enumerate}
\item $G$ and $H$ are homomorphism indistinguishable over the graphs of treedepth at most~$k$,\label{thm:treedepth1} 
\item the linear system of equations $\mathsf{TD}^k(G, H)$ has a non-negative rational solution,\label{thm:treedepth2}
\item the linear system of equations  $\mathsf{TD}^k(G, H)$ has a rational solution.\label{thm:treedepth3}
\end{enumerate}
\end{restatable}

Along with \cite{Grohe20}, the above theorem implies that the logical equivalence of two graphs~$G$ and~$H$ over the quantifier-rank-$k$ fragment of first-order logic with counting can be characterised by the feasibility of the system $\mathsf{TD}^k(G, H)$ of linear equations.

We cannot answer the second open question from \cite{dell_lovasz_2018}, but we prove a partial negative result: homomorphism indistinguishability over trees of bounded degree is strictly weaker than homomorphism indistinguishability over all trees.
\begin{restatable}{theorem}{degreeGH}
\label{thm:degreeGH}
For every integer $d \geq 1$, there exist simple graphs $G$ and $H$
such that $G$ and $H$ are homomorphism indistinguishable over trees of degree at most $d$, but $G$ and $H$ are not homomorphism indistinguishable over the class of all trees. 
\end{restatable}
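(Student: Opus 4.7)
The plan is to construct, for each $d$, an explicit pair $(G, H)$ witnessing the separation. By the standard argument of \cite{dvorak_recognizing_2010}, homomorphism indistinguishability over the class of all trees is equivalent to fractional isomorphism, i.e.\ to the stable colour refinement partitions of $G$ and $H$ coinciding. Homomorphism indistinguishability over the class $\mathcal{T}_d$ of trees of maximum degree at most $d$ corresponds to a $d$-bounded variant of colour refinement: at each iteration one retains only the first $d$ power sums of the multiset of neighbour colours rather than the full multiset. The goal is therefore to construct graphs whose $d$-bounded CR partitions coincide while their full stable CR partitions differ. A natural distinguishing functional is $\hom(K_{1,d+1}, G) = \sum_v \deg(v)^{d+1}$, since $K_{1,d+1}$ has maximum degree $d+1$ and hence lies outside $\mathcal{T}_d$.

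For the base case $d = 1$, the class $\mathcal{T}_1$ contains only $K_1$ and $K_2$, so indistinguishability over $\mathcal{T}_1$ merely demands matching vertex and edge counts. Take $G = P_4$ and $H = K_{1,3}$: both have four vertices and three edges, so $\hom(K_1, \cdot)$ and $\hom(K_2, \cdot)$ agree. Their degree sequences $(2,2,1,1)$ and $(3,1,1,1)$ differ, hence $\hom(P_3, G) = 10 \neq 12 = \hom(P_3, H)$, witnessing distinguishability by a tree in $\mathcal{T}_2 \setminus \mathcal{T}_1$.

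For $d \geq 2$ the construction is substantially more delicate, because $\mathcal{T}_d$ already contains every path, so matching hom counts from $\mathcal{T}_d$ forces agreement of all walk counts $\allones^\top A_G^k \allones = \allones^\top A_H^k \allones$. The plan is to exploit the classical Prouhet--Tarry--Escott phenomenon: there exist multisets $A, B \subseteq \mathbb{N}$ with $\sum_{a \in A} a^k = \sum_{b \in B} b^k$ for $k = 0, 1, \ldots, d$ but $\sum_{a \in A} a^{d+1} \neq \sum_{b \in B} b^{d+1}$. One plants such a PTE pair as the neighbour-colour multiset at a suitable stage of CR in $G$ and $H$, embedded inside a globally symmetric (e.g.\ regular) backbone so that no other hom count from $\mathcal{T}_d$ can detect it.

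The main obstacle is certifying that the PTE perturbation does not leak into path hom counts or hom counts from other bounded-degree trees. Since $\mathcal{T}_d$ is infinite, the verification is carried out at the level of the $d$-bounded CR partition itself rather than tree by tree. A clean way to achieve this is via Lovász's linear independence of $\hom(F,-)$ for distinct connected $F$: the functional $\hom(K_{1,d+1}, -)$ is not a finite rational linear combination of $\{\hom(T, -) : T \in \mathcal{T}_d\}$, so by linear-algebraic duality there exists a formal $\mathbb{Z}$-combination $v = \sum_i c_i [G_i]$ of graphs annihilated by every hom functional indexed by $\mathcal{T}_d$ while $\sum_i c_i \hom(K_{1,d+1}, G_i) \neq 0$. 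Splitting into positive and negative parts then yields the desired graphs $G := \bigsqcup_{c_i > 0} c_i \cdot G_i$ and $H := \bigsqcup_{c_i < 0} (-c_i) \cdot G_i$, using additivity of $\hom(T, -)$ over disjoint unions for connected $T$.
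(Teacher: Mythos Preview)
Your base case $d=1$ is correct, and your instinct to reach for the Prouhet--Tarry--Escott phenomenon is exactly right---that is precisely the number-theoretic input the paper uses. The problem is the final paragraph, where you abandon the constructive PTE route in favour of a ``linear-algebraic duality'' argument. That argument has a genuine gap.

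From Lov\'asz's linear independence you correctly deduce that $\hom(K_{1,d+1},-)$ is not a \emph{finite} $\mathbb{Q}$-linear combination of $\{\hom(T,-):T\in\mathcal{T}_d\}$. But the dual statement you need---existence of a finitely supported $v=\sum_i c_i[G_i]$ annihilated by every $\hom(T,-)$ with $T\in\mathcal{T}_d$ yet not by $\hom(K_{1,d+1},-)$---is strictly stronger. In the pairing between $\mathbb{Q}^{\mathcal{G}}$ and $\mathbb{Q}^{(\mathcal{G})}$, a function $f$ admits such a separating $v$ iff there is a \emph{finite} set $S$ of graphs with $f|_S\notin\operatorname{span}\{\hom(T,-)|_S:T\in\mathcal{T}_d\}$. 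Linear independence tells you nothing about this: since $\mathcal{T}_d$ is infinite, the restrictions $\hom(T,-)|_S$ could a priori span all of $\mathbb{Q}^S$ for every finite $S$. Exhibiting an $S$ where they do not is exactly the content of the theorem, so the duality step is circular. (Equivalently: your argument, if valid, would show that for \emph{any} class $\mathcal F$ and any $F'\notin\mathcal F$ there exist graphs separated by $\hom(F',-)$ but not by $\mathcal F$; this is false in general, and whether it holds for a specific $\mathcal F$ is precisely what must be proved.)

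The paper carries out the constructive programme you sketch but then drop. Starting from a PTE pair $a,b\in\mathbb{N}_0^n$ with $\|a\|_i=\|b\|_i$ for $i\le 2d$ but not for all $i$, it builds explicit symmetric integer matrices $M,L$ whose eigenstructure is engineered (via an orthogonalisation of the Schur powers $\mathbf 1,a,a^{\circ2},\ldots$) so that the action of $\boldsymbol{A}$ and of $d$-fold Schur products stays inside a controlled two-dimensional subspace. These multigraphs are then blown up to simple graphs $G,H$. The matching power sums up to $2d$ yield a unitary intertwiner on the relevant homomorphism-tensor subspaces, which by \cref{thm:meta} (specifically \cref{cor:dary}) certifies homomorphism indistinguishability over $d$-ary trees; the first mismatched power sum produces a star $K_{1,\ell}$ on which $\hom$ differs. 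None of this follows from soft linear-algebra alone---the PTE input and the explicit spectral construction are doing the real work you attributed to duality.
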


In conjunction with \cite{dell_lovasz_2018}, the above theorem yields the following corollary: counting homomorphisms from trees of bounded degree is strictly less powerful than the classical Colour Refinement algorithm \cite{GroheKMS17}, in terms of their ability to distinguish non-isomorphic graphs.

To prove these results, we develop a general theory that enables us to derive some of the  existing results as well as the new results in a unified algebraic framework exploiting a duality between algebraic varieties of ``tensor maps'' derived from homomorphism counts over families of rooted graphs and equationally defined equivalence relations, which are based on transformations of graphs in terms of unitary/orthogonal or, more often, pseudo-stochastic or doubly stochastic matrices. (We call a matrix over the complex numbers \emph{pseudo-stochastic} if its row and column sums are all $1$, and we call it \emph{doubly stochastic} if it is pseudo-stochastic and all its entries are non-negative reals.) 
The foundations of this theory have been laid in \cite{dell_lovasz_2018,dvorak_recognizing_2010} and, mainly,~\cite{mancinska_quantum_2020}. 
Some ideas can also be traced back to the work on homomorphism functions and connection matrices \cite{frelovsch07,lovasz_large_2012,LovaszS08,Schrijver09}, and a similar duality, called Galois connection there, that is underlying the algebraic theory of constraint satisfaction problems \cite{buljeakro05,bul17,thaziv16,zhu17}.

\subsection{Techniques}
To explain our core new ideas, let us start from a simple and well-known result: 
two symmetric real matrices $A$, $B$ are co-spectral if and only if for every $k\ge 1$ the matrices $A^k$ and $B^k$ have the same trace. 
If $A$, $B$ are the adjacency matrices of two graphs $G$, $H$, the latter can be phrased graph theoretically as: 
for every $k$, $G$ and $H$ have the same number of closed walks of length~$k$, or equivalently, the numbers of homomorphisms from a cycle $C_k$ of length~$k$ to~$G$ and to~$H$ are the same. 
Thus, $G$ and $H$ are homomorphism indistinguishable over the class of all cycles if and only if they are co-spectral. 
Note next that the graphs, or their adjacency matrices $A$, $B$, are co-spectral if and only if there is an orthogonal matrix $U$ such that $UA=BU$. 
Now, in \cite{dell_lovasz_2018} it was proved that $G$, $H$ are homomorphism indistinguishable over the class of all paths if and only if there is a pseudo-stochastic matrix $X$ such that $XA=BX$, and they are homomorphism indistinguishable over the class of all trees if and only if there is a doubly stochastic matrix $X$ such that $XA=BX$. 
From an algebraic perspective, the transition from an orthogonal matrix in the cycle result to a pseudo-stochastic in the path result is puzzling: 
where orthogonal matrices are very natural, pseudo-stochastic matrices are much less so from an algebraic point of view. 
Moving on to the tree result, we suddenly add non-negativity constraints---where do they come from? 
Our theory presented in \cref{sec:graphclasses} provides a uniform and very transparent explanation for the three results. 
It also allows us to analyse homomorphism indistinguishability over $d$-ary trees, for every $d\ge 1$, and to prove that it yields a strict hierarchy of increasingly finer equivalence relations.

Now suppose we want to extend these results to edge coloured graphs. Each edge-coloured graph corresponds to a family of matrices, one for each colour. Theorems due to Specht~\cite{specht_zur_1940} and Wiegmann~\cite{wiegmann_necessary_1961} characterise families of matrices that are simultaneously equivalent with respect to a unitary transformation. Interpreted over coloured graphs, the criterion provided by these theorems can be interpreted as homomorphism indistinguishability over coloured cycles. 
One of our main technical contributions (\cref{thm:soe,thm:soe-pos}) are variants of these theorems that establishes a correspondence between simultaneous equivalence with respect to pseudo-stochastic (doubly stochastic) transformations and homomorphism indistinguishability over coloured paths (trees). 
The proof is based on basic representation theory, in particular the character theory of semisimple algebras.

Interpreting graphs of bounded pathwidth in a ``graph-grammar style'' over coloured paths using graphs of bounded size as building blocks, we give an equational characterisation of homomorphism indistinguishability over graphs of pathwidth at most $k$. After further manipulations, we even obtain a characterisation in terms of a system of equations that are derived by lifting the basic equations for paths in a Sherali--Adams style. (The basic idea of these lifted equations goes back to \cite{atserias_sherali-adams_2013}.) This answers the open question from \cite{dell_lovasz_2018} stated above. In the same way, we can lift the characterisations of homomorphism indistinguishability over trees to graphs of treewidth $k$, and we can also establish a characterisation of homomorphism indistinguishability over graphs of ``cyclewidth''~$k$, providing a uniform explanation for all these results. 
Finally, we combine these techniques to prove a characterisation of homomorphism indistinguishability over graphs of treedepth~$k$ in terms of a novel system of linear equations.

\section{Preliminaries}

We write $\mathbb{N} = \{0,1,2,\dots\}$ for the set of non-negative integers.
All graphs in this article are finite, undirected, and without multiple edges.
A graph is \emph{simple} if it does not contain loops.

\subsection{Linear Algebra}
\label{sec:linalg}

The vector spaces considered in this article are over the rationals $\mathbb{Q}$, the reals~$\mathbb{R}$, or complex numbers~$\mathbb{C}$ and finite-dimensional. 
Thus, they carry an inner-product denoted by $\left< \cdot, \cdot \right>$.
We write $\mathbb{K} \in \{\mathbb{Q}, \mathbb{R}, \mathbb{C}\}$ as place holders for any of the respective fields.

For a vector space $V$ of dimension $n$, write $\End(V)$ for the vector space of \emph{endomorphisms} of $V$, that is the set of linear maps $V \to V$.
Let $\id_V \in \End(V)$ denote the identity map.
By standard linear algebra, $\End(V)$ can be identified with $\mathbb{K}^{n \times n}$. Any $A \in \End(V)$ corresponds to a matrix $(a_{ij}) \in \mathbb{K}^{n \times n}$ and $\tr A = \sum_{i = 1}^n a_{ii}$ where $\tr A$ denotes the \emph{trace} of the endomorphism~$A$. Note that the trace is a property of an endomorphism as it does not depend on the concrete basis chosen.

Contrarily, the \emph{sum-of-entries} of a matrix $A = (a_{ij}) \in \mathbb{K}^{n \times n}$ denoted by $\soe (a_{ij}) = \sum_{ij} a_{ij}$ is not a property of the endomorphism encoded by $(a_{ij})$ but of the matrix itself. For example, $\soe(B^{-1}AB ) \neq \soe(A)$ for $A = \left( \begin{smallmatrix} 3&0\\0&5 \end{smallmatrix} \right)$ and $B = \left( \begin{smallmatrix} 1&1\\0&1 \end{smallmatrix} \right)$.
Nevertheless, the vector spaces considered in this article always have a \enquote{natural} basis since they arise as subspaces of $\mathbb{K}^I$ for some finite index set $I$.

Of particular interest are maps between vector spaces.
Let $V$ and $W$ be $\mathbb{K}$-vector spaces.
A map $U\colon V \to W$ is \emph{unitary} if $U^*U = \id_V$ and $UU^* = \id_W$ for $U^*\colon W \to V$ the \emph{adjoint} of $U$, cf.\@ \cite[p.\@~185]{lang_linear_1987}.
For a matrix $A = (a_{ij}) \in \mathbb{K}^{n \times n}$, $A^*$ is the conjugate transpose of $A$ when the field is $\mathbb{C}$ and the transpose of $A$ when it is $\mathbb{R}$ or $\mathbb{Q}$. In the latter case, we write $A^T$ for $A^*$ and call a unitary matrix \emph{orthogonal}.
Note that a map $U\colon V \to W$ is unitary if and only if  
$\left<Ux,Uy\right> = \left<x,y\right>$  for all $x,y \in V$, i.e.\@ $U$ preserves the inner-product.

Let $I$ and $J$ be finite index sets. Fix vector spaces $V \leq \mathbb{K}^I$ and $W \leq \mathbb{K}^J$ such that the all-ones vectors $\boldsymbol{1}_I \in V$ and $\boldsymbol{1}_J \in W$. Then a map $X\colon V \to W$ is \emph{pseudo-stochastic} if $X\boldsymbol{1}_I = \boldsymbol{1}_J$ and $X^* \boldsymbol{1}_J = \boldsymbol{1}_I$ for $X^*$ the adjoint of $X$.

The Gram--Schmidt orthogonalisation procedure is a well-known method in linear algebra~\cite[Theorem~2.1]{lang_linear_1987}.
For the purpose of this article, a variant of it is required to construct unitary linear transformations between vector spaces spanned by possibly infinite sequences of vectors.

\begin{lemma} \label{lemma:gs}
	Let $V$ and $W$ be finite-dimensional inner-product spaces over a field $\mathbb{K}$.
	Let $\left<-,-\right>_V$ and $\left<-,-\right>_W$ denote their respective inner-products.
	Let $I$ be a possibly infinite set.
	Let $(v_i)_{i \in I}$ and $(w_i)_{i \in I}$ be two sequences of vectors such that $v_i \in V$ and $w_i \in W$ for all $i \in I$.
	Suppose that 
	\begin{enumerate}
		\item the $v_i$ for $i \in I$ span $V$, the $w_i$ for $i \in I$ span $W$, and
		\item $\langle v_i, v_j \rangle_V = \langle w_i, w_j \rangle_W$ for all $i, j \in I$.
	\end{enumerate}
	Then there exists a unitary linear map $\Phi \colon V \to W$ such that $\Phi(v_i) = w_i$ for all $i \in I$.
\end{lemma}
\begin{proof}
	Since the $v_i$ for $i \in I$ span $V$, there exists a finite set $I' \subseteq I$ such that the $v_i$ for $i \in I'$ are a basis for $V$.
	We define $\Phi$ on this basis by $v_i \mapsto w_i$ for $i \in I'$.

	The map $\Phi$ is such that $\Phi(v_k) = w_k$ for all $k \in I$. Indeed, let $k \in I$ be arbitrary.
	Then there exist $\alpha_i \in \mathbb{K}$, $i \in I'$, such that $v_k = \sum_{i \in I'} \alpha_i v_i$. Then for arbitrary $j \in I$,
	\[
	\left< w_j, \Phi(v_k) \right>_W
	= \sum_{i \in I'} \alpha_i \left< w_j,\Phi(v_i) \right>_W
	= \sum_{i \in I'} \alpha_i \left< w_j,w_i \right>_W
= \left< v_j, v_k \right>_V
	= \left< w_j, w_k \right>_W.
	\] 
	Thus, $\Phi(v_k) = w_k$.
	It follows that $\Phi$ is unitary. 
	Indeed, 
	$\left< \Phi v_k, \Phi v_j \right>_W
	= \left<w_k,  w_j\right>_W
	= \left<v_k, v_j\right>_V$ for all $k, j \in I$. 
\end{proof}

A recurring theme in this article is the study of $\mathbb{K}$-vector spaces over a finite set $I$. On such a space, the \emph{Schur product} of two vectors $v, w \in \mathbb{K}^I$ is defined via $(v \odot w)(x) \coloneqq v(x) w(x)$ for all $x \in I$. Write furthermore $v^{\odot i}$, $i \in \NN$, for the $i$-fold Schur product of the vector $v \in \mathbb{K}^I$ with itself. Conventionally, $v^{\odot 0} \coloneqq \allones_I$ where $\allones_I$ denotes the all-ones vector on $I$.
The following fact is classical:

\begin{fact}[{Vandermonde Determinant~\cite[p.\@~155]{lang_linear_1987}}] \label{fact:vd}
Let $I$ be a finite set of size $n$.
If $v \in \mathbb{K}^I$ has distinct entries then $\allones, v, v^{\odot 2}, \dots, v^{\odot (n - 1)}$ is a basis of $\mathbb{K}^I$.
\end{fact}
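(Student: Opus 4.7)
The plan is to show that the $n$ vectors $\allones, v, v^{\circ 2}, \dots, v^{\circ(n-1)}$ are linearly independent. Since $\dim_\CC \CC^I = n$, linear independence immediately upgrades to spanning, so the vectors form a basis.

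First I would fix an enumeration $I = \{x_1, \dots, x_n\}$, identify $\CC^I$ with $\CC^n$ accordingly, and assemble the candidate vectors into an $n \times n$ matrix $M$ whose $j$-th column (for $j = 0, 1, \dots, n-1$) is $v^{\circ j}$. Concretely, $M_{ij} = v(x_i)^{\,j}$ under $0$-indexed columns, which is precisely the Vandermonde matrix associated with the scalars $v(x_1), \dots, v(x_n)$.

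The key step is then to invoke the classical Vandermonde determinant formula
\[
\det M = \prod_{1 \leq i < j \leq n} \bigl( v(x_j) - v(x_i) \bigr).
\]
By hypothesis, the entries of $v$ are pairwise distinct, so every factor is nonzero and hence $\det M \neq 0$. This establishes linear independence of the columns and concludes the proof.

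Since the Vandermonde formula is genuinely classical (and cited directly in the statement), there is no real obstacle. If a fully self-contained derivation were wanted instead, I would argue as follows: a nontrivial relation $\sum_{j=0}^{n-1} \alpha_j v^{\circ j} = 0$ produces a nonzero polynomial $p(t) = \sum_{j=0}^{n-1} \alpha_j t^j$ of degree at most $n-1$ that vanishes at all $n$ distinct values $v(x_1), \dots, v(x_n)$, contradicting the fundamental fact that a nonzero univariate polynomial over $\CC$ of degree less than $n$ has fewer than $n$ roots.
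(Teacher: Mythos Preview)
Your proposal is correct and is exactly the standard argument: form the Vandermonde matrix, use the determinant formula (or the polynomial-root-count argument) to get linear independence, and conclude by a dimension count. The paper does not supply its own proof of this fact at all; it simply states it with a reference to Lang, so there is nothing to compare against beyond noting that your write-up is the expected textbook justification.
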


\Cref{fact:vd} will be applied in the following form:

\begin{lemma}[Vandermonde Interpolation] \label{lemma:vd}
Fix a finite set $I$.
Let $V \leq \mathbb{K}^I$ be a vector space closed under Schur product that contains $\boldsymbol{1}$.
Define an equivalence relation $\sim$ on $I$ by $x \sim x'$ iff $v(x) = v(x')$ for all $v \in V$. Then the indicator vectors $\boldsymbol{1}_{C_1}, \dots, \boldsymbol{1}_{C_r}$ of the equivalence classes form a basis of $V$.
\end{lemma}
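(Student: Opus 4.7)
The plan is to verify the three basis properties one by one. First, linear independence of $\boldsymbol{1}_{C_1}, \dots, \boldsymbol{1}_{C_r}$ is immediate, since the classes are pairwise disjoint and hence the indicator vectors have disjoint supports. Second, the inclusion $V \subseteq \langle \boldsymbol{1}_{C_1}, \dots, \boldsymbol{1}_{C_r} \rangle$ follows directly from the definition of $\sim$: every $v \in V$ takes a common value $\beta_i$ on each class $C_i$, so $v = \sum_{i=1}^r \beta_i \boldsymbol{1}_{C_i}$.

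The core of the proof is therefore to show that each $\boldsymbol{1}_{C_i}$ lies in $V$. For this, I will first construct a single vector $v^\star \in V$ that takes $r$ pairwise distinct values $\alpha_1, \dots, \alpha_r$ on $C_1, \dots, C_r$. For every pair $i \neq j$, the definition of $\sim$ supplies some $v_{ij} \in V$ that separates $C_i$ from $C_j$. I consider the parametric family $v_\lambda \coloneqq \sum_{i < j} \lambda_{ij} v_{ij} \in V$ for $\lambda \in \mathbb{C}^{\binom{r}{2}}$. For each fixed pair $(k, l)$ with $k \neq l$, the locus where $v_\lambda$ takes the same value on $C_k$ and $C_l$ is a proper linear subspace of $\mathbb{C}^{\binom{r}{2}}$: it is proper because the choice $\lambda_{kl} \neq 0$ with all other coordinates zero already yields separation. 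Since $\mathbb{C}$ is infinite, a finite union of proper subspaces cannot cover the whole space, so any $\lambda$ avoiding all $\binom{r}{2}$ forbidden subspaces produces the desired $v^\star$.

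Given $v^\star$, the Schur powers $\boldsymbol{1} = (v^\star)^{\circ 0}, v^\star, (v^\star)^{\circ 2}, \dots, (v^\star)^{\circ (r-1)}$ all lie in $V$ by closure under the Schur product, and each is constant on every class $C_i$ with value $\alpha_i^j$. Evaluating them on a system of representatives of $C_1, \dots, C_r$ yields the $r \times r$ Vandermonde matrix $(\alpha_i^{j-1})_{i,j \in [r]}$, which is invertible by \cref{fact:vd} since the $\alpha_i$ are pairwise distinct. Hence for each $i$ there exist coefficients $c_0, \dots, c_{r-1}$ such that $\sum_{j=0}^{r-1} c_j (v^\star)^{\circ j}$ agrees with $\boldsymbol{1}_{C_i}$ on the representatives, and since all vectors involved are constant on each class, the identity extends to all of $I$. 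This places $\boldsymbol{1}_{C_i}$ in $V$, completing the proof. The only mild obstacle is the construction of $v^\star$ in the middle paragraph; everything else reduces to standard Vandermonde interpolation.
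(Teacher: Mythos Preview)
Your proof is correct and follows essentially the same Vandermonde-interpolation idea as the paper: once you have a vector in $V$ that takes pairwise distinct values on the equivalence classes, its Schur powers span all class-constant vectors and in particular each $\boldsymbol{1}_{C_i}$.

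The one genuine difference is how the separating vector is obtained. The paper's proof fixes a class $C$ and picks a single $v\in V$ that separates some point of $C$ from some point outside, then applies Vandermonde to the Schur powers of this $v$; as written this is a bit loose, since such a $v$ need not take a value on $C$ different from \emph{every} other class, so a polynomial in $v$ alone cannot in general produce $\boldsymbol{1}_C$. Your construction of a global separating vector $v^\star$ via a generic linear combination of the pairwise separators $v_{ij}$ closes this gap cleanly: the ``bad'' loci where two classes coincide are finitely many proper linear subspaces of the parameter space, and over $\mathbb{C}$ their union is proper. This is a standard and perfectly acceptable refinement of the paper's argument.
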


\begin{proof}
Write $W$ for the space spanned by the $\boldsymbol{1}_{C_1}, \dots, \boldsymbol{1}_{C_r}$. Clearly, $V \leq W$.
Conversely, fix an equivalence class indicator vector $\boldsymbol{1}_C$. Then there exists $v \in V$ such that $v(i) \neq v(j)$ for some $i \in C$ and $j \in I \setminus C$. Furthermore, $v$ is constant on $C$. Let $\ell$ denote the number of different values that $v$ exhibits. Reduce $v$ to a length-$\ell$ vector $v'$ keeping only one entry for every occurring value. The space spanned by $\boldsymbol{1}, v', v'^{\odot 2}, \dots, v'^{\odot(\ell-1)}$ is $\ell$-dimensional by \cref{fact:vd}. Hence, every length-$\ell$ vector is a linear combination of iterated Schur products of $v'$. Lifting the reduced vectors by replicating entries, shows that $\boldsymbol{1}_C$ is in the space spanned by iterated Schur products of $v$. Hence $W \leq V$.
\end{proof}

For the sake of completeness, we give a proof of the following lemma on doubly stochastic matrices.
Recall that a matrix $X \in \mathbb{R}^{I \times J}$ is doubly stochastic if it is pseudo-stochastic and its entries are non-negative.

\begin{lemma}[{\cite[Lemma~1]{tinhofer_graph_1986}}] \label{lem:ds}
	Let $I$ and $J$ be finite sets.
Let $v \in \mathbb{R}^I$ and $w \in \mathbb{R}^J$.
If $X \in \mathbb{R}^{J \times I}$ is doubly stochastic such that $Xv = w$ and $X^T w = v$ then $v(i) = w(j)$ 
for all $i \in I$ and $j \in J$ such that $X(j, i) > 0$.
\end{lemma}
\begin{proof}
In virtue of a classical theorem of Hardy, Littlewood, and Polya \cite[Theorem~1a]{mirsky_results_1963}, $Xv = w$ and $X^Tw = v$ imply that $v$ and $w$ have the same multisets of entries.

Let $A \subseteq I$, respectively $B \subseteq J$, denote the set of indices on which $v$, respectively $w$, assumes its least value $r$.
It is claimed that if $i \in I \setminus A$ and $j \in B$ or if $i \in A$ and $j \in J \setminus B$ then $X(j,i) = 0$.
By induction on the number of different values in $v$ and $w$, this yields the claim.
First observe that for $j \in B$,
\begin{align*}
r
= w(j) 
= 
(Xv)(j) 
= r \sum_{i \in A} X(j, i)
+ \sum_{i \in I \setminus A} X(j,i)v(i) 
\geq r  \sum_{i \in I} X(j,i)
 = r (X\boldsymbol{1})(j)
 = r.
\end{align*}
Hence, equality holds throughout.
This implies that $\sum_{i \in I \setminus A} X(j,i) = 0$ as $v(i) > r$ for all $i \in I \setminus A$.
It follows that $X(j,i) = 0$ for $j \in B$ and $i \in I \setminus A$.
The same holds when $i \in A$ and $j \in J \setminus B$. 
\end{proof}

\subsection{Representation Theory of Involution Monoids}

A \emph{monoid} $\Gamma$ is a possibly infinite set equipped with an associative binary operation and an identity element denoted by~$1_\Gamma$.
An example for a monoid is the \emph{endomorphism monoid} $\End V$ for a vector space~$V$ over~$\mathbb{K}$ with composition as binary operation and $\id_V$ as identity element. A \emph{monoid representation} of $\Gamma$ is a map $\phi \colon \Gamma \to \End V$ such that $\phi(1_\Gamma) = \id_V$ and $\phi(gh) = \phi(g)\phi(h)$ for all $g,h \in \Gamma$. The representation is \emph{finite-dimensional} if $V$ is finite-dimensional. 
For every monoid $\Gamma$, there exists a representation, for example the \emph{trivial representation} $\Gamma \to \End \{0\}$ given by $g \mapsto \id_{\{0\}}$.

Let $\phi \colon \Gamma \to \End(V)$ and $\psi \colon \Gamma \to \End(W)$ be two representations over $\mathbb{K}$.
Then $\phi$ and $\psi$ are \emph{equivalent} if there exists a $\mathbb{K}$-vector space isomorphism $X \colon V \to W$ such that $X \phi(g) = \psi(g) X$ for all $g \in \Gamma$.
Moreover, $\phi$ is a \emph{subrepresentation} of $\psi$ if $V \leq W$ and $\psi(g)$ restricted to $V$ equals $\phi(g)$ for all $g \in \Gamma$.
A representation~$\phi$ is \emph{simple} if its only subrepresentations are the trivial representation and $\phi$ itself.
The \emph{direct sum} of $\phi$ and $\psi$ denoted by $\phi \oplus \psi \colon \Gamma \to \End(V \oplus W)$ is the representation that maps $g \in \Gamma$ to $\phi(g) \oplus \psi(g) \in \End(V) \oplus \End(W) \leq \End(V \oplus W)$.
A representation $\phi$ is \emph{semisimple} if it is the direct sum of simple representations.

Let $\phi \colon \Gamma \to \End V$ be a representation with subrepresentations $\psi' \colon \Gamma \to \End V'$ and $\psi'' \colon \Gamma \to \End V''$. 
The representation $\phi' \colon \Gamma \to \End(V' \cap V'')$ sending $g \in \Gamma$ to the restriction $\phi(g)|_{V' \cap V''} \in \End(V' \cap V'')$ of $\phi(g)$ to $V' \cap V''$ is called the \emph{intersection of $\psi'$ and $\psi''$}. For a set $S \subseteq V$, define the \emph{subrepresentation of $\phi$ generated by $S$} as the intersection of all subrepresentations $\psi' \colon \Gamma \to \End V'$ of $\phi$ such that $S \subseteq V'$.

The \emph{character} of a representation $\phi$ is the map $\chi_\phi \colon \Gamma \to \mathbb{K}$ defined as $g \mapsto \tr(\phi(g))$.
Its significance stems from the following theorem, which can be traced back to Frobenius and Schur~\cite{frobenius_uber_1906}. For a contemporary proof, consult \cite[Theorem~7.19]{lam_first_2001}.

\begin{theorem}[{Frobenius--Schur \cite{frobenius_uber_1906}}]
\label{thm:frobenius-schur}
Let $\Gamma$ be a monoid. Let $\phi \colon \Gamma \to \End(V)$ and $\psi \colon \Gamma \to \End(W)$ be finite-dimensional semisimple representations over $\mathbb{K}$. 
Then $\phi$ and $\psi$ are equivalent if and only if $\chi_\phi = \chi_\psi$.
\end{theorem}

The monoids studied in this work are equipped with an additional structure which ensures that their finite-dimensional representations are always semisimple: 
An \emph{involution monoid}\footnote{Involution monoids correspond to $*$-algebras: A $*$-algebra is an involution monoid with additional structure. Conversely, an involution monoid $\Gamma$ gives rise to the $*$-algebra $\mathbb{C}\Gamma$ of formal finite $\mathbb{C}$-linear combinations of elements in $\Gamma$. 
	Although $*$-algebras are much more studied than involution monoids,
	we choose to work with the latter in order to maintain a clear separation between algebraic and combinatorial  arguments in \cref{sec:three-variants,sec:graphclasses}, respectively.} is a monoid $\Gamma$ with a unary operation ${}^* \colon \Gamma \to \Gamma$ such that
\begin{equation} \label{eq:involution-laws}
	(gh)^* = h^* g^*  \quad \text{and} \quad (g^*)^* = g \quad \text{for all } g, h \in \Gamma.
\end{equation}
Note that $\End V$ is an involution monoid with the adjoint operation $X \mapsto X^*$.
Representations of involution monoids must preserve the involution operations.

\begin{lemma} \label{prop:semisimple}
	Every finite-dimensional representation of an involution monoid~$\Gamma$ over $\mathbb{K}$ is semisimple.
\end{lemma}
\begin{proof}
	Let $\phi \colon \Gamma \to \End V$ be a finite-dimensional representation of $\Gamma$. It suffices to show that for every subrepresentation $\psi \colon \Gamma \to \End W$ of $\phi$ there exists a subrepresentation $\psi' \colon \Gamma \to \End W'$ of $\phi$ such that $\phi = \psi \oplus \psi'$, i.e.\@ $\phi$ acts as $\psi$ on $W$ and as $\psi'$ on $W'$.
	Set $W'$ to be the orthogonal complement of $W$ in $V$. It has to be shown that $\phi(g) \in \End V$ for every $g \in \Gamma$ can be restricted to an endomorphism of $W'$. Let $w \in W$ and $w' \in W'$ be arbitrary. Then
	$
	\left< \phi(g) w', w \right>
	= \left< w', \phi(g)^* w \right>
	= \left< w', \phi(g^*)  w \right>
	= 0
	$
	since $\phi(g^*)$ maps $W \to W$ and $W \perp W'$. Hence, the image of $W'$ under $\phi(g)$ is contained in the orthogonal complement of $W$, which equals $W'$. Clearly, $\phi = \psi \oplus \psi'$.
\end{proof}

\subsection{Tensors, Tensor Maps, and Algebraic Operations}
\label{sec:tensors}
In this section, we define tensors and tensor maps as well as algebraic operations involving them.

Fix $\mathbb{K} \in \{\mathbb{Q}, \mathbb{R}, \mathbb{C}\}$.
For a finite set $I$ and $k, \ell \in \mathbb{N}$, the set of all functions $X\colon I^k \times I^\ell \to \mathbb{K}$ forms a $\mathbb{K}$-vector space denoted by $\mathbb{K}^{I^k \times I^\ell}$. 
We call the elements of $\mathfrak{T}_I(k, \ell) \coloneqq \mathbb{K}^{I^k \times I^\ell}$ the \emph{$(k,\ell)$-shaped tensors over~$I$}.
A $(k,0)$-shaped tensor is also called \emph{$k$-shaped} and $\mathfrak{T}_I(k)$ is defined as $\mathfrak{T}_I(k,0)$.
We identify $0$-shaped tensors with scalars, i.e.\@ $\mathfrak{T}_I(0,0) = \mathfrak{T}_I(0) = \mathbb{K}$. Furthermore, $1$-shaped tensors are vectors in $\mathfrak{T}_I(1) = \mathbb{K}^{I}$, 
$(1,1)$-shaped tensors are matrices in $\mathbb{K}^{I\times I}$, et cetera.

The set $\mathfrak{T}_I(k, \ell)$ forms a $\mathbb{K}$-vector space under point-wise operations, i.e.\@ for $\phi, \psi \in \mathfrak{T}_I(k, \ell)$ and $a,b \in \mathbb{K}$, $(a\phi + b\psi)(\boldsymbol{u}, \boldsymbol{v}) \coloneqq a\phi(\boldsymbol{u}, \boldsymbol{v}) + b\psi(\boldsymbol{u}, \boldsymbol{v})$
for $\vec v\in I^k$ and $\boldsymbol{u} \in I^\ell$.

The \emph{sum-of-entries} of $\phi \in \mathfrak{T}_I(k, \ell)$ is the $(0,0)$-shaped tensor $\soe \phi$ defined via $(\soe \phi) \coloneqq \sum_{\boldsymbol{u} \in I^k , \boldsymbol{v} \in I^\ell} \phi(\boldsymbol{u},\boldsymbol{v})$.
The \emph{trace} of $\phi \in \mathfrak{T}_I(k,k)$ is the $(0,0)$-shaped tensor $\tr \phi$ defined via 
$(\tr \phi) \coloneqq \sum_{\boldsymbol{u}\in I^k} \phi(\boldsymbol{u},\boldsymbol{u})$.

The \emph{tensor product} $\phi \otimes \psi \in \mathfrak{T}_I(k_1+k_2, \ell_1+\ell_2)$ for two tensors $\phi \in \mathfrak{T}_I(k_1, \ell_1)$, $\psi\in \mathfrak{T}_I(k_2, \ell_2)$ where $k_1,k_2,\ell_1,\ell_2 \in \mathbb{N}$ is defined by the equation 
$(\phi \otimes \psi)(\boldsymbol{u}\boldsymbol{u}', \boldsymbol{v}\boldsymbol{v}') \coloneqq \phi(\boldsymbol{u},\boldsymbol{v}) \psi(\boldsymbol{u}',\boldsymbol{v}')$ for $\boldsymbol{u} \in I^{k_1}$, $\boldsymbol{u}' \in I^{k_2}$, $\boldsymbol{v} \in I^{\ell_1}$, $\boldsymbol{v}' \in I^{\ell_2}$. 

The \emph{matrix-vector product} $\phi \cdot \psi \in \mathfrak{T}_I(k)$ for two tensors  $\phi \in \mathfrak{T}_I(k,\ell)$ and $\psi \in \mathfrak{T}_I(\ell)$ is the tensor defined by $(\phi \cdot \psi)(\vec{v}) \coloneqq \sum_{\vec{w} \in I^\ell} \phi(\vec{v},\vec{w}) \psi(\vec{w})$ and $\boldsymbol{v} \in I^{k}$.
Analogously, the \emph{matrix product} of $\phi \in \mathfrak{T}_I(k,\ell)$ and $\psi \in \mathfrak{T}_I(\ell, m)$ for $m \in \mathbb{N}$ denoted by $\phi \cdot \psi \in \mathfrak{T}_I(k,m)$ is defined as $(\phi \cdot \psi)(\vec{v},\vec{u}) \coloneqq \sum_{\vec{w} \in I^k} \phi(\vec{v},\vec{w}) \psi(\vec{w}, \vec{u})$  for  $\boldsymbol{v} \in I^{k}$ and $\boldsymbol{u} \in I^m$.

The \emph{Schur product} of $\phi, \psi \in \mathfrak{T}_I(k)$ is defined as $(\phi \odot \psi)(\vec{v}) \coloneqq \phi(\vec{v})\psi(\vec{v})$  for $\boldsymbol{v} \in I^{k}$. The \emph{inner-product} of $\phi, \psi \in \mathfrak{T}_I(k)$ is $\left< \phi, \psi \right> \coloneqq \sum_{\boldsymbol{v} \in I^k} \overline{\phi(\boldsymbol{v})} \psi(\boldsymbol{v})$ if $\mathbb{K} = \mathbb{C}$ and 
$\left< \phi, \psi \right> \coloneqq \sum_{\boldsymbol{v} \in I^k} \phi(\boldsymbol{v}) \psi(\boldsymbol{v})$ otherwise.

A \emph{$(k,\ell)$-shaped tensor map on graphs} is a function $\phi$ that maps a graph $G$ to a $(k,\ell)$-shaped tensor $\phi_G \in \mathfrak{T}_{V(G)}(k,\ell)$ over the set of vertices $V(G)$ of $G$.
A $(k,\ell)$-shaped tensor map $\phi$ is \emph{equivariant} if for all isomorphic graphs $G$ and $H$, all isomorphisms $f$ from $G$ to $H$, and all $\vec v\in V(G)^k$ and $\boldsymbol{u} \in V(G)^\ell$, it holds that $\phi(\vec v, \vec u)=\phi_H(f(\vec v), f(\vec u))$ where $f$ acts entry-wise on tuples.
We write $\mathfrak{F}(k, \ell)$ for the set of all equivariant $(k,\ell)$-shaped  tensor maps on graphs.

The operation defined on $\mathfrak{T}_I(k, \ell)$ above induce operations on $(k,\ell)$-shaped tensor maps on graphs.
It is easy to see that these operation preserve equivariance and thereby induce operations on $\mathfrak{F}(k,\ell)$.
For example, for $\phi \in \mathfrak{F}(k,\ell)$ and $\psi \in \mathfrak{F}(\ell, m)$, 
it is $\phi \cdot \psi \in \mathfrak{F}(k, m)$ with $(\phi \cdot \psi)_G(\boldsymbol{v}, \boldsymbol{u}) \coloneqq (\phi_G \cdot \psi_G)(\boldsymbol{v}, \boldsymbol{u}) = \sum_{\boldsymbol{w} \in V(G)^\ell} \phi_G(\boldsymbol{v}, \boldsymbol{w}) \psi_G(\boldsymbol{w}, \boldsymbol{u})$ for every graph $G$ and $\boldsymbol{u} \in V(G)^k$, $\boldsymbol{w} \in V(G)^m$.

\section{Three Variants of a Theorem by Specht and Wiegmann}
\label{sec:three-variants}

In this section, two novel variants of a classical theorem by Specht~\cite{specht_zur_1940} and Wiegmann~\cite{wiegmann_necessary_1961} are derived.
All results give criteria for simultaneous similarity of sequences of matrices in terms of character-like functions.

We fix throughout a possibly infinite set of indices $M$.
Let $\Gamma_M$ denote the set of all finite words over the alphabet $\{x_i, x^*_i \mid i \in M\}$.
Clearly, $\Gamma_M$ forms a monoid under concatenation and with the empty word $\epsilon$ as unit element.
Furthermore, it can be endowed with an involution defined by extending $x_i \mapsto x^*_i$ to $\Gamma_M$ in accordance with \cref{eq:involution-laws}.
In this way, $\Gamma_M$ can be thought of as a \emph{free involution monoid}.

Let $\matvec{A} = (A_i)_{i \in M}$ be a sequence of matrices in $\mathbb{K}^{I \times I}$ for some finite index set $I$.
For a word $w \in \Gamma_{M}$, let $w_{\matvec{A}}$ denote the matrix obtained by substituting $x_i \mapsto A_i$ and $x^*_i \mapsto A_i^*$ for all $i \in M$ and evaluating the matrix product.
Furthermore, $\epsilon_{\matvec{A}}$ is set to be the identity matrix in $\mathbb{K}^{I \times I}$.
Crucially, the words in $\Gamma_M$ are finite despite that the underlying alphabet is infinite. Hence, this map is well-defined.
The substitution $w \mapsto w_{\matvec{A}}$ is an involution monoid representation of~$\Gamma_{M}$.

\subsection{Unitary and Orthogonal Similarity}

We first recall the classical Specht--Wiegmann Theorem.
See also \cite{jing_unitary_2015,futorny_spechts_2017} for more recent accounts.

\begin{theorem}[Specht \cite{specht_zur_1940}, Wiegmann \cite{wiegmann_necessary_1961}] \label{thm:wiegmann}
	Let $\mathbb{K} \in \{\mathbb{R}, \mathbb{C}\}$.
	Let $I$ and $J$ be finite index sets. Let $M$ be any set.
	Let $\matvec{A} = (A_i)_{i \in M}$ and $\matvec{B} = (B_i)_{i \in M}$  be two sequences of matrices such that $A_i \in \mathbb{K}^{I \times I}$ and $B_i \in \mathbb{K}^{J \times J}$  for all $i \in M$. Then the following are equivalent: 
	\begin{enumerate}
		\item there exists a unitary $U \in \mathbb{K}^{J \times I}$ such that $U A_i  =  B_i U$ and $U A_i^*  =  B_i^*U $ for every $i \in M$,
		\item for every word $w \in \Gamma_{M}$, $\tr(w_{\matvec{A}}) = \tr(w_{\matvec{B}})$.\label{it:spw3}
	\end{enumerate} 
\end{theorem}
\begin{proof}As observed above, the maps $w \mapsto w_\matvec{A}$ and $w \mapsto w_\matvec{B}$ yield two representations of the involution monoid $\Gamma_{M}$.
	By \cref{prop:semisimple}, these representations are semisimple.
	If $\tr(w_\matvec{A}) = \tr(w_\matvec{B})$ for every word $w \in \Gamma_{M}$, these two representations have the same character, 
	and hence, by \cref{thm:frobenius-schur}, they are equivalent. Therefore, there exists an invertible matrix $X$ such that 
	$X^{-1} w_\matvec{B} X = w_\matvec{A}$ for every word $w \in \Gamma_{M}$. 
	
	The desired unitary matrix $U$ can then be recovered from the polar decomposition of $X= HU$, where $U$ is unitary and $H$ is positive semi-definite, cf.\@ \cite[p.\@~292]{lang_linear_1987} and \cite[Corollary~2.3]{jing_unitary_2015}.
	Since $XX^*$ commutes with $B_i$ for $i \in M$, so does $H$ and hence, $X^{-1}B_iX = U^*(H^{-1}B_iH)U = U^*B_iU = A_i$ for $i \in M$.
\end{proof}

In fact, it suffices to compare traces from finitely many words in $\Gamma_M$ to establish the assertions of \cref{thm:wiegmann}.
The following result is due to \cite{pearcy_complete_1962}. Tighter bounds are known \cite{pappacena_upper_1997}, a linear bound is conjectured. 
We include the following proof for completeness.

\begin{theorem}[{\cite[Theorem~1]{pearcy_complete_1962}}]	\label{lem:pearcy}
	Writing $n \coloneqq |I| = |J|$,
	the conditions of \cref{thm:wiegmann} are equivalent to the following: For every word $w \in \Gamma_{M}$ of length at most $2n^2 - 1$, $\tr(w_{\matvec{A}}) = \tr(w_{\matvec{B}})$.
\end{theorem}
\newcommand{\bothmat}{{\matvec{A} \oplus \matvec{B}}}
\begin{proof} 
	To ease notation, we suppose wlog that $I$ and $J$ are disjoint.
	For a word $w \in \Gamma_M$, define the block matrix $w_{\bothmat} \coloneqq \left( \begin{smallmatrix}
		w_{\matvec A} & 0 \\ 0 & w_{\matvec B}
	\end{smallmatrix}\right) \in \mathbb{K}^{(I \cup J) \times (I \cup J) }$.
	Observe that $w \mapsto w_{\bothmat}$ is an involution monoid representation. In particular, $(xy)_{\bothmat} = x_{\bothmat}y_{\bothmat}$ and $(x^*)_{\bothmat} = (x_\bothmat)^*$ for all $x,y\in \Gamma_M$.
	
	Let $S_{\bothmat} \leq \mathbb{K}^{(I \cup J) \times (I \cup J)}$ denote the vector space spanned by the $w_{\bothmat}$ for $w \in \Gamma_M$.
	Clearly, $S_{\bothmat}$ has dimension at most $2n^2$.
	Furthermore, for $\ell \geq 0$, write $S_{\bothmat}^{\leq \ell} \leq S_{\bothmat}$ for the vector space spanned by the $w_{\bothmat}$ for words $w \in \Gamma_M$ of length at most $\ell$.
	The space $S_{\bothmat}^{\leq 0}$ containing the identity matrix, has dimension~$1$.
	
	\begin{claim} \label{cl:space-collapse}
		If $S_{\bothmat}^{\leq \ell} = S_{\bothmat}^{\leq \ell +1}$ for some $\ell \in \mathbb{N}$ then $S_{\bothmat} = S_{\bothmat}^{\leq \ell}$.
		In particular, $S_{\bothmat}^{\leq 2n^2-1} = S_{\bothmat}$.
	\end{claim}
	\begin{claimproof}
		By induction on~$j \geq 1$, it is shown that $S_{\bothmat}^{\leq \ell+j} \leq S_{\bothmat}^{\leq \ell}$. The base case $j=1$ holds by assumption.
		Let $x \in \Gamma_{M}$ be a word of length $\ell+j+1$.
		Let $y$ denote the first character of $x$ and write $z$ for the  length-$(\ell+j)$ suffix of $x$, i.e.\@ $x = yz$.
		By assumption, there exist words $z^1, \dots, z^r$ of length at most $\ell$ and coefficients $\alpha_1,\dots, \alpha_r \in \mathbb{K}$ such that $z_\bothmat = \sum_{i=1}^r \alpha_i z_\bothmat^i$. 
		Hence, $x_\bothmat = y_\bothmat z_\bothmat = \sum_{i=1}^r \alpha_i y_\bothmat z_\bothmat^i = \sum_{i=1}^r \alpha_i (y z^i)_{\bothmat} \in S_\bothmat^{\leq \ell + 1}$.
		Thus, $S_\bothmat^{\leq \ell+j+1} \leq S_\bothmat^{\leq \ell + 1} \leq S_{\bothmat}^{\leq \ell}$, as desired.
	\end{claimproof}	
		
	Equipped with \cref{cl:space-collapse}, we prove the main claim.
	For a matrix $C \in  \mathbb{K}^{(I \cup J) \times (I \cup J)}$, write $\tr_{\matvec{A}} C \coloneqq \sum_{i \in I} C(i,i)$
	and analogously $\tr_{\matvec{B}} C \coloneqq \sum_{j \in J} C(j,j)$.
	Let $w \in \Gamma_{M}$ be arbitrary.
	By \cref{cl:space-collapse}, there exist $w^1, \dots, w^r \in \Gamma_M$ of length at most $2n^2-1$ and coefficients $\alpha_1, \dots, \alpha_r \in \mathbb{K}$ such that
	\( w_{\bothmat} = \sum_{i=1}^r \alpha_i w^i_{\bothmat}\).
	Hence,
	\[
	\tr(w_{\matvec{A}}) 
	= \tr_{\matvec{A}}(w_\bothmat) 
	= \sum \alpha_i\tr_{\matvec{A}}(w^i_{\bothmat})
	= \sum \alpha_i\tr(w^i_{\matvec{A}})
	= \sum \alpha_i\tr(w^i_{\matvec{B}})
	= \tr_{\matvec{B}}(w_\bothmat) 
	= \tr(w_{\matvec{B}}). 
	\]
	Thus, the assertion in \cref{lem:pearcy} implies \cref{it:spw3} of \cref{thm:wiegmann}.
\end{proof}

\subsection{Pseudo-Stochastic Similarity}

Our first variant of \cref{thm:wiegmann} establishes a criterion for simultaneous similarity w.r.t.\@ a pseudo-stochastic matrix. 
In this case, instead of traces, sums-of-entries have to be considered.

\begin{theorem}\label{thm:soe}
	Let $\mathbb{K} \in \{\mathbb{Q}, \mathbb{R}, \mathbb{C}\}$.
	Let $I$ and $J$ be finite index sets. Let $M$ be any set.
	Let $\matvec{A} = (A_i)_{i \in M}$ and $\matvec{B} = (B_i)_{i \in M}$  be two sequences of matrices such that $A_i \in \mathbb{K}^{I \times I}$ and $B_i \in \mathbb{K}^{J \times J}$  for $i \in M$. Then the following are equivalent: 
	\begin{enumerate}
		\item there exists a pseudo-stochastic matrix $X \in \mathbb{K}^{J \times I}$ such that $X A_i  =  B_iX $ and $X A_i^*  =  B_i^* X$ for all $i \in M$,
		\item for every word $w \in \Gamma_{M}$, $\soe(w_{\matvec{A}}) = \soe(w_{\matvec{B}})$.\label{it:thm:soe2} 
	\end{enumerate} 
\end{theorem}

\Cref{thm:soe} is implied by \cref{lem:invmodsoe}, which provides a sum-of-entries analogue of \cref{thm:frobenius-schur}. As it establishes a character-theoretic interpretation of the function $\soe$, it may be of independent interest.

\begin{lemma} \label{lem:invmodsoe}
	Let $\mathbb{K} \in \{\mathbb{Q}, \mathbb{R}, \mathbb{C}\}$.
	Let $\Gamma$ be an involution monoid.
	Let $I$ and $J$ be finite index sets.
	Let $\phi \colon \Gamma \to \mathbb{K}^{I \times I}$ and $\psi \colon \Gamma \to \mathbb{K}^{J \times J}$ be representations of $\Gamma$.
	Let $\phi' \colon \Gamma \to \End(V)$ and $\psi' \colon \Gamma \to \End(W)$ denote the subrepresentations of $\phi$ and of $\psi$ generated by $\allones_I$ and $\allones_J$, respectively.
	Then the following are equivalent:
	\begin{enumerate}
		\item for all $g \in \Gamma$, $\soe \phi(g) = \soe \psi(g)$,\label{rep1}
		\item there exists a unitary pseudo-stochastic $U \colon V \to W$ such that $U \phi'(g) = \psi'(g) U$ for all $g\in \Gamma$,\label{rep2}
		\item there exists a pseudo-stochastic $X \in \mathbb{K}^{J \times I}$ such that $X \phi(g) = \psi(g) X$ for all $g \in \Gamma$.\label{rep3}
	\end{enumerate}
\end{lemma}
\begin{proof}
	Suppose that \cref{rep1} holds.
	The space $V$ is spanned by the vectors $\phi(g) \allones_I$ for $g \in \Gamma$ while $W$ is spanned by the $\psi(g) \allones_J$ for $g \in \Gamma$. For $g, h \in \Gamma$, it holds that
	\[
	\left< \phi(g) \allones_I, \phi(h) \allones_I \right>
	= \left< \allones_I, \phi(g^* h) \allones_I \right>
	= \soe \phi(g^* h)
	= \soe \psi(g^* h)
	= \left< \psi(g) \allones_J, \psi(h) \allones_J \right>.
	\]
	Hence, $V$ and $W$ are spanned by vectors whose pairwise inner-products are respectively the same. Thus, by \cref{lemma:gs}, there exists a unitary $U \colon V \to W$ such that $U \phi(g) \allones_I = \psi(g)\allones_J$ for all $g \in \Gamma$.
	This immediately implies that $U \phi'(g) = \psi'(g) U$ for $g \in \Gamma$. 
	Furthermore, $U \allones_I = U \phi(1_\Gamma) \allones_I = \psi(1_\Gamma) \allones_J = \allones_J$ and $U^* \allones_J = \allones_I$ since $U$ is unitary.
	Thus, \cref{rep2} holds.
	
	Suppose now that \cref{rep2} holds. By \cref{prop:semisimple}, write $\phi = \phi' \oplus \phi''$ and $\psi = \psi' \oplus \psi''$. By assumption, there exists  a unitary $U \colon V \to W$ such that $U \phi'(g) = \psi'(g) U$ for all $g \in \Gamma$. Extend $U$ to $X$ by letting it annihilate $V^\perp$. Then
	$X \phi(g) = (U \oplus 0)(\phi' \oplus \phi'')(g) = U\phi'(g) \oplus 0 = \psi'(g) U \oplus 0 = \psi(g) X$ for all $g \in \Gamma$. Since $U$ is pseudo-stochastic and $\allones_I \in V$ and $\allones_J \in W$, $X$ is pseudo-stochastic as well. Hence, \cref{rep3} holds.
	That \cref{rep3} implies \cref{rep1} is immediate.
\end{proof}

The following \cref{prop:soe-quadratic-length} parallels the polynomial bound  from \cref{lem:pearcy} on the length of the words which need to be inspected.

\begin{theorem} \label{prop:soe-quadratic-length}
	Writing $n \coloneqq |I| = |J|$,
	the conditions of \cref{thm:soe} are equivalent to the following: For every word $w \in \Gamma_{M}$ of length at most $2n-1$, $\soe(w_{\matvec{A}}) = \soe(w_{\matvec{B}})$.
\end{theorem}
\begin{proof}
	Suppose wlog that $I$ and $J$ are disjoint.
	As in the proof of \cref{lem:pearcy}, write $w_{\bothmat} \coloneqq \left( \begin{smallmatrix}
		w_{\matvec A} & 0 \\ 0 & w_{\matvec B}
	\end{smallmatrix}\right) \in \mathbb{K}^{(I \cup J) \times (I \cup J) }$ for $w \in \Gamma_M$.
	Furthermore, write $\boldsymbol{1}$ for the all-ones vector in $\mathbb{K}^{I \cup J}$.
	Write $V_{\bothmat} \leq \mathbb{K}^{I \cup J}$ for the space spanned by the $w_{\bothmat} \boldsymbol{1}$ for all $w \in \Gamma_M$.
	Write $V_{\bothmat}^{\leq \ell} \leq V_{\bothmat}$ for $\ell \geq 0$ for the subspace spanned by the $w_{\bothmat} \boldsymbol{1}$ for $w \in \Gamma_M$ of length~$\leq \ell$.
	Clearly, $V_{\bothmat}^{\leq 0}$ containing $\boldsymbol{1}$ is one-dimensional.
	The space $V_{\bothmat}$ is at most $2n$-dimensional.
	
	\begin{claim} \label{cl:space-collapse-soe}
		If $V_{\bothmat}^{\leq \ell} = V_{\bothmat}^{\leq \ell +1}$ for some $\ell \in \mathbb{N}$ then $V_{\bothmat} = V_{\bothmat}^{\leq \ell}$.
		In particular, $V_{\bothmat}^{\leq 2n-1} = V_{\bothmat}$.
	\end{claim}
	\begin{claimproof}
		By induction on $j \geq 1$, we show that $V_\bothmat^{\leq \ell + j} \leq V_\bothmat^{\leq \ell}$.
		The base case $j=1$ holds by assumption.
		Let $x \in \Gamma_{M}$ be a word of length $\ell+j+1$. 
		Let $y$ denote the first character of $x$ and write $z$ for the  length-$(\ell+j)$ suffix of $x$, i.e.\@ $x = yz$.
		By assumption, there exist words $z^1, \dots, z^r$ of length at most~$\ell$ and coefficients $\alpha_1,\dots, \alpha_r \in \mathbb{K}$ such that $z_\bothmat \boldsymbol{1} = \sum_{i=1}^r \alpha_i z_\bothmat^i \boldsymbol{1}$. 
		Hence, $x_\bothmat\boldsymbol{1} = y_\bothmat z_\bothmat\boldsymbol{1} = \sum_{i=1}^r \alpha_i y_\bothmat z_\bothmat^i\boldsymbol{1} = \sum_{i=1}^r \alpha_i (y z^i)_{\bothmat}\boldsymbol{1} \in V_\bothmat^{\leq \ell + 1}$.
		Thus, $V_\bothmat^{\leq \ell +j+1} \leq V_\bothmat^{\leq \ell +1}  \leq V_\bothmat^{\leq \ell} $, as desired.
	\end{claimproof}
	
	Let $w \in \Gamma_M$ be arbitrary. By \cref{cl:space-collapse-soe}, there exist $w^i \in \Gamma_M$ of length at most $2n-1$ and coefficients $\alpha_i \in \mathbb{K}$ such that $w_{\bothmat} \allones = \sum \alpha_i w^i_{\bothmat} \allones$. Hence, writing $\boldsymbol{1}_A$ and $\boldsymbol{1}_B$ for the indicator vectors on $I$ and $J$ in $\mathbb{K}^{I \cup J}$,
	\[
	\soe(w_{\matvec{A}}) 
= \boldsymbol{1}_{\matvec{A}}^T w_{\bothmat} \boldsymbol{1}
		= \sum \alpha_i\boldsymbol{1}_{\matvec{A}}^T w^i_{\bothmat} \boldsymbol{1}
		= \sum \alpha_i \soe(w^i_{\matvec{A}})
		= \sum \alpha_i \soe(w^i_{\matvec{B}})
		= \soe(w_{\matvec{B}}). \qedhere
		\]
	\end{proof}

\subsection{Doubly Stochastic Similarity}

Our second variant of the Specht--Wiegmann Theorem gives a criterion for simultaneous doubly stochastic similarity. 
Recall that a real matrix is doubly stochastic if it is pseudo-stochastic and has non-negative entries. 
Since double stochasticity makes no sense over complex numbers, we restrict our attention to representations of involution monoids over real vector spaces.
In contrast to \cref{thm:wiegmann,thm:soe}, the criterion derived in this section does not involve words over some set of matrices but trees, defined as follows:

\begin{definition}[Trees over a monoid]
	Let $\Gamma$ be a monoid. A \emph{tree over $\Gamma$} is a tuple $t = (T, r, e)$ where $T$ is a finite tree, $r \in V(T)$, and $e \colon E(T) \to \Gamma$ is a map which assigns an element of $\Gamma$ to every edge of $T$. Write $T(\Gamma)$ for the set of trees over $\Gamma$.
\end{definition}

Two trees $t = (T, r, e)$ and $t' = (T', r', e')$ over $\Gamma$ are \emph{isomorphic}
if there is a graph isomorphism $h \colon T \to T'$ such that  $h(r) = r'$ and $e'(h(u)h(v))) = e(uv)$ for all $uv \in E(T)$.
We tacitly identify isomorphic trees over $\Gamma$ and
write $T(\Gamma)$ for the set of (isomorphism types) of trees over $\Gamma$.
We consider the following operations on $T(\Gamma)$.

\begin{definition} \label{def:trees-monoid-ops}
	Let $t = (T, r, e)$ and $t' = (T', r', e')$ be elements of $T(\Gamma)$. Let $g \in \Gamma$.
	\begin{enumerate}
		\item Define $t \odot t' \coloneqq (T'', r'', e'') \in T(\Gamma)$ where $T''$ is the tree obtained from taking the disjoint union of $T$ and $T'$ and identifying the vertices $r$ and $r'$. Furthermore, $r''$ is the identified vertex and $e'' \colon E(T'') \to \Gamma$ is such that $e''|_{E(T)} = e$ and $e''|_{E(T')} = e'$.
		\item Define $gt \coloneqq (T'', r'', e'') \in T(\Gamma)$ where $V(T'') \coloneqq V(T) \sqcup \{r''\}$, $E(T'') \coloneqq E(T) \sqcup \{r r''\}$, and $e'' \colon E(T'') \to \Gamma$ is such that $e''|_{E(T)} = e$  and $e''(rr'') = g$.
	\end{enumerate}
\end{definition}

The elements of $T(\Gamma)$ can be constructed from the tree over $\Gamma$ with only one vertex by the operations in \cref{def:trees-monoid-ops}, 
i.e.\ by gluing and by attaching a new root $s$ to a tree $t = (T, r, e)$ and picking an element $g \in \Gamma$ to associate with the new edge $sr$.

The set $T(\Gamma)$ forms a monoid under the operation $\odot$ of gluing two of its elements together at their roots.
Since isomorphic trees are identified in $T(\Gamma)$, the monoid is commutative.
Its unique neutral element is the one-vertex tree.
Endowing trees over monoids with a monoid structure is not a novel idea, cf.\ e.g.\ \cite{bojanczyk_forest_2008}.

A representation of a monoid $\Gamma$ induces a representation\footnote{In \cref{def:tree-rep},  $\mathbb{K}^I$ is understood as the monoid whose binary operation is the Schur product~$\odot$ and whose neutral element is the all-ones vector $\boldsymbol{1}_I$. Crucially, $\mathbb{K}^I$ is commutative. The map $\widehat{\phi} \colon T(\Gamma) \to \mathbb{K}^I$ defined there is strictly speaking only a monoid homomorphism and not a monoid representation as $\mathbb{K}^I$ is not an endomorphism monoid. However, $\mathbb{K}^I$ can be identified with the set of diagonal matrices in $\mathbb{K}^{I \times I}$ in which case the Schur product in $\mathbb{K}^I$ corresponds to the standard matrix product in $\mathbb{K}^{I \times I}$. Thus, calling $\widehat{\phi}$ a monoid representation is only slightly abusive. It allows us to make a distinction between matrices and vectors, which will in \cref{sec:graphclasses} amount to a distinction between bilabelled and labelled graphs.} of the monoid $T(\Gamma)$.

\begin{definition}\label{def:tree-rep}
	Let $\mathbb{K}$ be a field.
	Let $\Gamma$ be a monoid and let $I$ be a finite set.
	A monoid representation $\phi \colon \Gamma \to \mathbb{K}^{I \times I}$ of $\Gamma$ induces a monoid representation $\widehat{\phi} \colon T(\Gamma) \to \mathbb{K}^I$ of $T(\Gamma)$ defined inductively as follows:
	\begin{enumerate}
		\item $\widehat{\phi}(t) \coloneqq \boldsymbol{1}_I$ if $t$ has only one vertex,
		\item $\widehat{\phi}(t) \coloneqq \widehat{\phi}(t') \odot \widehat{\phi}(t'')$ if $t \coloneqq t' \odot t''$ for $t', t'' \in T(\Gamma)$ on more than one vertex,
		\item $\widehat{\phi}(t) \coloneqq \phi(g) \cdot \widehat{\phi}(t')$ if $t = gt'$ for $t' \in T(\Gamma)$ and $g \in \Gamma$.
	\end{enumerate}
\end{definition}

\begin{figure}
	\centering
	\includegraphics[page=22]{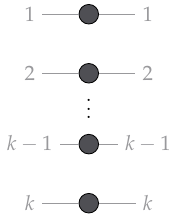}
	\caption[Example for a tree over an involution monoid.]{Example for a tree $t \in \Gamma_M$ for $M = [5]$. The root is depicted in grey. Here, $t_{\matvec A} = (A_2^*((A_1\boldsymbol{1}) \odot (A_5\boldsymbol{1}))) \odot (A_3 \boldsymbol{1})$.}
	\label{fig:example-tree-over-matrix}
\end{figure}

For the involution monoid $\Gamma_M$, we abbreviate the representation of $T(\Gamma_M)$ induced by $w \mapsto w_{\matvec{A}}$ as $t \mapsto t_{\matvec{A}}$.  
See \cref{fig:example-tree-over-matrix} for an example. 
The main result of this section is the following:

\begin{theorem}\label{thm:soe-pos}
	Let $\mathbb{K} \in \{\mathbb{Q}, \mathbb{R}\}$.
	Let $I$ and $J$ be finite index sets. Let $M$ be any set.
	Let $\matvec{A} = (A_i)_{i \in M}$ and $\matvec{B} = (B_i)_{i \in M}$  be two sequences of matrices such that $A_i \in \mathbb{K}^{I \times I}$ and $B_i \in \mathbb{K}^{J \times J}$  for $i \in M$. Then the following are equivalent: 
	\begin{enumerate}
		\item there exists a doubly stochastic matrix $X \in \mathbb{K}^{J \times I}$ such that $X A_i  =  B_iX $ and $X A_i^*  =  B_i^* X$ for all $i \in M$.
		\item for every $t \in T(\Gamma_{M})$, $\soe(t_{\matvec{A}}) = \soe(t_{\matvec{B}})$.\label{it:thm:soe-pos2}
	\end{enumerate} 
\end{theorem}

\Cref{thm:soe-pos} is implied by the following \cref{lem:invmodsoe-pos}.
Inspecting the proof of this lemma, shows that $\mathbb{K}$ can be replaced by $\mathbb{Q}$ in both \cref{thm:soe-pos,lem:invmodsoe-pos}.

\begin{lemma} \label{lem:invmodsoe-pos}
	Let $\mathbb{K} \in \{\mathbb{Q}, \mathbb{R}\}$.
	Let $\Gamma$ be an involution monoid.
	Let $I$ and $J$ be finite index sets.
	Let $\phi \colon \Gamma \to \mathbb{K}^{I \times I}$ and $\phi \colon \Gamma \to \mathbb{K}^{J \times J}$ be representations of $\Gamma$.
	Let $\widehat{\phi}$ and $\widehat{\psi}$ be the induced representations of $T(\Gamma)$.
	Then the following are equivalent:
	\begin{enumerate}
		\item there exists a doubly stochastic $X \in \mathbb{K}^{J \times I}$ such that $X \phi(g) = \psi(g) X$ for all $g \in \Gamma$,\label{p1}
		\item there exists a pseudo-stochastic $X \in \mathbb{K}^{J \times I}$ such that $X \phi(g) = \psi(g) X$ for all $g\in \Gamma$ and $X\widehat{\phi}(t) = \widehat{\psi}(t)$ for all $t \in T(\Gamma)$,\label{p2}
		\item for all $t \in T(\Gamma)$, $\soe \widehat{\phi}(t) = \soe \widehat{\psi}(t)$.\label{p3}
	\end{enumerate}
\end{lemma}

\begin{proof}
\Cref{p1} implies \cref{p2}: Let $X$ be as in \cref{p1}. 
It has to be shown that $X\widehat{\phi}(t) = \widehat{\psi}(t)$ for all $t \in T(\Gamma)$.
The proof of the following slightly stronger claim is guided by \cite[Lemma~1]{tinhofer_graph_1986}.

\begin{claim} \label{claim:trees}
For all $t \in T(\Gamma)$,
if $X(j, i) > 0$ for $i \in I$ and $j \in J$ then 
$\widehat{\phi}(t)(i) = \widehat{\psi}(t)(j)$.
\end{claim}

\Cref{claim:trees} implies that $X\widehat{\phi}(t) = \widehat{\psi}(t)$ for all $t \in T(\Gamma)$.
Indeed, for $t \in T(\Gamma)$ and $j \in J$,
\begin{align} 
(X\widehat{\phi}(t))(j) 
& = \sum_{\substack{i \in I, \\ X(j,i) > 0}}  X(j,i) \widehat{\phi}(t)(i) 
= \sum_{\substack{i \in I, \\ X(j,i) > 0}} X(j,i) \widehat{\psi}(t)(j) 
= \widehat{\psi}(t)(j) (X\boldsymbol{1}_I)(j)
= \widehat{\psi}(t)(j).\label{eq:consequence}
\end{align}
One may similarly see that $X^T\widehat{\psi}(t) = \widehat{\phi}(t)$ for all $t \in T(\Gamma)$.
This statement is applied in the inductive proof of \cref{claim:trees}.

\begin{claimproof}[Proof of \cref{claim:trees}]
The proof is by induction on the structure of the elements of $T(\Gamma)$, cf.\@ \cref{def:tree-rep}. For the single-vertex tree, the claim is vacuous.

For the induction step, the
two means of constructing more complex elements $t = (T, r, e) \in T(\Gamma)$ from \cref{def:trees-monoid-ops}  are considered. 
If $t = t' \odot t''$ for two non-trivial $t', t'' \in T(\Gamma)$, the claim is readily verified.
It remains to consider the case in which $r$ has a unique child $s$ in $T$. 
In this case,
write $t = gt'$ for some $t' \in T(\Gamma)$ and $g \in \Gamma$.

The vectors $\widehat{\phi}(t)$ and $\widehat{\psi}(t)$ satisfy the assumptions of \cref{lem:ds}. Indeed, by \cref{p1,eq:consequence},
\begin{equation}
X \widehat{\phi}(t) 
= X\phi(g)\widehat{\phi}(t') 
= \psi(g) X \widehat{\phi}(t') 
= \psi(g) \widehat{\psi}(t') 
= \widehat{\psi}(t),  \label{eq:xrg}
\end{equation}
and, alluding to the assumption that $\Gamma$ is an involution monoid,
\begin{align}
X^T \widehat{\psi}(t) 
&= X^T \psi(g)\widehat{\psi}(t') 
= (\psi(g^*) X)^T \widehat{\psi}(t') 
= (X \phi(g^*))^T \widehat{\psi}(t') \notag \\
&= \phi(g) X^T \widehat{\psi}(t') 
= \phi(g) \widehat{\phi}(t') 
= \widehat{\phi}(t). \label{eq:xrg2}
\end{align}
\Cref{eq:xrg,eq:xrg2} in conjunction with \cref{lem:ds} imply \cref{claim:trees}.
\end{claimproof}

\Cref{p2} implies \cref{p1}: 
Write $X' \in \mathbb{K}^{J \times I}$ for the matrix in \cref{p2}. 
That is, $X'$ is pseudo-stochastic and satisfies $X' \phi(g) = \psi(g) X'$ for all $g \in \Gamma$ 
as well as $X' \widehat{\phi}(t) = \widehat{\psi}(t)$ for all $t \in T(\Gamma)$.
Write $V \leq \mathbb{K}^I$ for the vector spaced spanned by the $\widehat{\phi}(t)$ for $t \in T(\Gamma)$
and $P \in \mathbb{K}^{I \times I}$ for the projection onto $V$.

We claim that the matrix $X \coloneqq X'P \in \mathbb{K}^{J \times I}$ is doubly stochastic and satisfies $X \phi(g) = \psi(g) X$ for all $g \in \Gamma$.
To that end, first consider the following claim:
\begin{claim} \label{cl:commute-p}
	For $g \in \Gamma$, it holds that $P \phi(g) = \phi(g) P$.
\end{claim}
\begin{claimproof}
	For $t \in T(\Gamma)$ and $g \in \Gamma$,
	it is $\phi(g) \widehat{\phi}(t) = \widehat{\phi}(gt)$.
	Hence,
	the image of $V$ under $\phi(g)$ is contained in $V$.
	Moreover, the image of the orthogonal complement $V^\perp$ of $V$ under $\phi(g)$ is contained in $V^\perp$.
	Indeed, for $t \in T(\Gamma)$ and $v \in V^\perp$,
	$
		\left< \phi(g) v, \widehat{\phi}(t) \right>
		=
		\left< v, \phi(g)^* \widehat{\phi}(t) \right>
		= \left< v, \widehat{\phi}(g^*t) \right> 
		= 0
	$
	since $\widehat\phi(g^*t) \in V$. 
\end{claimproof}
Given \cref{cl:commute-p}, it remains to argue that $X$ is doubly stochastic.
To that end, note that $V$ is closed under Schur products by construction. 
Consider the equivalence relation $\sim$ on~$I$ induced by $V$ via $i \sim i'$ iff $v(i) = v(i')$ for all $v \in V$.
By \cref{lemma:vd}, the indicator vectors of the equivalence classes of this relation are a basis of $V$.
Let $i \in I$ and let $C \subseteq I$ denote its equivalence class. Then $P e_{i} = \boldsymbol{1}_C/|C|$ for $e_{i} \in \mathbb{K}^{I}$ the standard basis vector corresponding to~$i$. It remains to compute $Xe_{i}$. Write $\boldsymbol{1}_C = \sum_r \alpha_r \widehat{\phi}(t_r)$ for some $t_r \in T(\Gamma)$ and $\alpha_r \in \mathbb{K}$. 
Then
\begin{align*}
(X \boldsymbol{1}_C) \odot (X \boldsymbol{1}_C) 
&= \sum_{r,s} \alpha_{r}\alpha_s (X \widehat{\phi}(t_r)) \odot (X \widehat{\phi}(t_s)) \\
&= \sum \alpha_{r}\alpha_s \widehat{\psi}(t_r) \odot \widehat{\psi}(t_s)\\
&= \sum \alpha_{r}\alpha_s \widehat{\psi}(t_r\odot t_s)\\
&= \sum \alpha_{r}\alpha_s X(\widehat{\phi}(t_r\odot t_s))\\
&= \sum \alpha_{r}\alpha_s X(\widehat{\phi}(t_r)\odot \widehat{\phi}(t_s))\\
&= X( \boldsymbol{1}_C \odot \boldsymbol{1}_C)\\
&= X\boldsymbol{1}_C
\end{align*}
Hence, $X\boldsymbol{1}_C$ has entries in $\{0,1\}$. 
Finally, observe that $X(j, i) = e_{j}^T X e_{i} = e_j^T X P e_i = e_j^T X \boldsymbol{1}_C/|C|$ is non-negative for  every $j \in J$.
This yields \cref{p1}. 

\Cref{p2} implies \cref{p3}: Let $t \in T(\Gamma)$. Then
\[
\soe \widehat{\phi}(t) 
= \left< \boldsymbol{1}_I, \widehat{\phi}(t) \right>
= \left< \boldsymbol{1}_J, X\widehat{\phi}(t) \right>
= \left< \boldsymbol{1}_J, \widehat{\psi}(t) \right>
= \soe \widehat{\psi}(t). 
\]

\Cref{p3} implies \cref{p2}: Write $V \leq \mathbb{K}^I$ and $W \leq \mathbb{K}^J$ for the vector spaced spanned by the $\widehat{\phi}(t)$ and $\widehat{\psi}(t)$ for $t \in T(\Gamma)$ respectively. Since for all $t, s \in T(\Gamma)$
\[
\left< \widehat{\phi}(t), \widehat{\phi}(s) \right>
= \soe(\widehat{\phi}(t) \odot \widehat{\phi}(s))
= \soe(\widehat{\phi}(t \odot s))
= \soe(\widehat{\psi}(t \odot s))
= \left< \widehat{\psi}(t), \widehat{\psi}(s) \right>,
\]
\Cref{lemma:gs} implies the existence of an orthogonal map $U \colon V \to W$ such that $U \widehat{\phi}(t) = \widehat{\psi}(t)$ for all $t \in T(\Gamma)$.
Extend $U$ to $X \in \mathbb{K}^{J \times I}$ by letting it annihilate $V^\perp$. This matrix satisfies all desired properties.
\end{proof}

In stark contrast to \cref{lem:pearcy,prop:soe-quadratic-length}, we give an exponential bound on the size of the trees in \cref{prop:soe-pos-exp-size}.
Lower bounds are discussed in \cref{rem:tight}.
First we bound the depth of the trees.
The \emph{depth} of a tree $t = (T, r, e) \in T(\Gamma_M)$ is defined as the maximal number of edges of any path on $T$ starting in~$r$.

\begin{lemma} \label{lem:soe-pos-exp-size}
	Writing $n \coloneqq |I| = |J|$,
	the conditions of \cref{thm:soe-pos} are equivalent to the following: For every tree $t \in T(\Gamma_{M})$ of depth $\leq n$, $\soe(t_{\matvec{A}}) = \soe(t_{\matvec{B}})$.
\end{lemma}
\begin{proof}
	For $d \geq 0$, write $T_{\matvec{A}}^{\leq d} \leq \mathbb{K}^I$ for the vector space spanned by the $t_{\matvec{A}}$ for all $t \in T(\Gamma_M)$ of depth $\leq d$.
	Write $T_{\matvec{A}}$ for the space spanned by all $t_{\matvec{A}}$ for $t \in T(\Gamma_M)$.
	Clearly, $T_{\matvec{A}}$ is at most $n$ dimensional. The space $T_{\matvec{A}}^{\leq 0}$ containing the all-ones vector is one-dimensional.
	\begin{claim} \label{cl:chain-tree}
		If $T_{\matvec{A}}^{\leq d} = T_{\matvec{A}}^{\leq d +1}$ for some $d \in \mathbb{N}$ then $T_{\matvec{A}} = T_{\matvec{A}}^{\leq d}$.
		In particular, $T_{\matvec{A}}^{\leq n - 1} = T_{\matvec{A}}$.
	\end{claim}
	\begin{claimproof}
		By induction on $j \geq 1$, it is shown that $T_{\matvec{A}}^{\leq d+j} \leq T_{\matvec{A}}^{\leq d}$.
		The base case $j=1$ holds by assumption.
		Let $t \in T(\Gamma_{M})$ be a tree of depth $d+j+1$. 
		If $t$ can be written as $t = t^1 \odot \dots \odot t^\ell$ for some trees $t^1,\dots, t^\ell \in T(\Gamma_M)$ whose roots have degree one then it suffices to show that $t^i_{\matvec{A}} \in T_{\matvec{A}}^{\leq d}$ for all $i \in [\ell]$ since $T_{\matvec{A}}^{\leq d}$ is closed under Schur products.
		Hence, it may be supposed that the root $r$ of $t$ has a single child $s$. Write $t'$ for the subtree of $t$ rooted at $s$ and $g \in \Gamma_M$ for the element associated with the edge $rs$.
		
		By assumption, there exist trees $x^1, \dots, x^m \in T(\Gamma_M)$ of depth at most $d$ and coefficients $\alpha_1,\dots, \alpha_r \in \mathbb{K}$ such that $t'_{\matvec{A}} = \sum_{i=1}^m \alpha_i x_{\matvec{A}}^i$. Then $t_{\matvec{A}} = \sum_{i=1}^m \alpha_i g_{\matvec{A}} x_{\matvec{A}}^i \in T_{\matvec{A}}^{\leq d+1}$.
		Thus, $T_{\matvec{A}}^{\leq d+j+1} \leq T_{\matvec{A}}^{\leq d +1} \leq T_{\matvec{A}}^{\leq d}$, as desired.
	\end{claimproof}
	
	By \cref{cl:chain-tree}, every $t_{\matvec{A}} \in T_{\matvec{A}}$ can be written as linear combination of some $t'_{\matvec{A}} \in T_{\matvec{A}}^{\leq n - 1}$. It remains to show that the coefficients in this linear combination are the same for $\matvec{A}$ and $\matvec{B}$.
	
	\begin{claim} \label{cl:reduce-depth}
		For every $t \in T(\Gamma_M)$, there exist coefficients $\alpha_1, \dots, \alpha_m \in \mathbb{K}$ 
		and trees $t^1, \dots, t^m \in T(\Gamma_M)$ of depth at most $n-1$
		such that
		\begin{equation} \label{eq:condTree}
			t_{\mathsf{A}} = \sum_i \alpha_i t^i_{\mathsf{A}} \quad \text{and} \quad
			t_{\mathsf{B}} = \sum_i \alpha_i t^i_{\mathsf{B}}.
		\end{equation}
	\end{claim}
	\begin{claimproof}
		By induction on the structure of $t$. If $t$ is the single vertex tree then the claim is vacuously true.
		If $t$ has a unique child, write $s$ for the subtree rooted at this child and $g \in \Gamma_M$ for the element associated to the edge~$ts$.
		Observe that $t_{\matvec A} = g_{\matvec A} s_{\matvec A}$
		and $t_{\matvec B} = g_{\matvec B} s_{\matvec B}$.
		The inductive hypothesis applies to $s$ yielding coefficients $\alpha_1, \dots, \alpha_m$ and trees $s^1, \dots, s^m \in T(\Gamma_M)$ of depth at most $n-1$ such that \cref{eq:condTree} holds.
		By \cref{cl:chain-tree}, for every $i \in [m]$, there exist coefficients $\beta_{ij}$ and trees $r^{ij}$ of depth at most $n-1$ such that
		$g_{\matvec{A}}s^i_{\matvec{A}} = \sum_j \beta_{ij} r^{ij}_{\matvec{A}}$.
		In order to conclude that the same identity holds for $\matvec{B}$,
		observe that 
		the tree represented by $g s^i$ has depth at most $n$.
		The same holds for all trees occurring in the following calculation:		
		\begin{align*}
			&\left< g_{\matvec{B}}s^i_{\matvec{B}} - \sum \beta_{ij} r^{ij}_{\matvec{B}}, g_{\matvec{B}}s^i_{\matvec{B}} - \sum \beta_{ij} r^{ij}_{\matvec{B}} \right> \\
			&= \soe(g_{\matvec{B}}s^i_{\matvec{B}} \odot g_{\matvec{B}}s^i_{\matvec{B}}) - 2 \sum \beta_{ij} \soe(g_{\matvec{B}}s^i_{\matvec{B}} \odot r^{ij}_{\matvec{B}})
			+ \sum \beta_{ij} \beta_{ik} \soe(r^{ij}_{\matvec{B}} \odot r^{ik}_{\matvec{B}}) \\
			&= \soe(g_{\matvec{A}}s^i_{\matvec{A}} \odot g_{\matvec{A}}s^i_{\matvec{A}}) - 2 \sum \beta_{ij} \soe(g_{\matvec{A}}s^i_{\matvec{A}} \odot r^{ij}_{\matvec{A}})
			+ \sum \beta_{ij} \beta_{ik} \soe(r^{ij}_{\matvec{A}} \odot r^{ik}_{\matvec{A}}) \\
				&=\left< g_{\matvec{A}}s^i_{\matvec{A}} - \sum \beta_{ij} r^{ij}_{\matvec{A}}, g_{\matvec{A}}s^i_{\matvec{A}} - \sum \beta_{ij} r^{ij}_{\matvec{A}} \right> \\			
			&= 0.
		\end{align*}
		Thus, $g_{\matvec{B}}s^i_{\matvec{B}} = \sum_j \beta_{ij} r^{ij}_{\matvec{B}}$, as desired.
		If $t$ is of the form $t^1 \odot \dots \odot t^r$ for some trees $t^1,\dots, t^r$ whose roots have degree one then the first case applies to each of these subtrees.
		The claim follows readily.
	\end{claimproof}
	Finally, for every tree $t \in T(\Gamma_M)$ of arbitrary depth, let  $\alpha_1, \dots, \alpha_m \in \mathbb{K}$ 
	and $t^1, \dots, t^m \in T(\Gamma_M)$  be as in \cref{cl:reduce-depth}. Then
	$
	\soe(t_\matvec{A}) = \sum \alpha_i \soe(t^i_\matvec{A}) = \sum \alpha_i \soe(t^i_\matvec{B}) = \soe(t_{\matvec{B}}).
	$
\end{proof}

We conclude this section by bounding the degree of the trees which need to be considered in \cref{thm:soe-pos}.
In particular, 
it suffices to consider trees on at most $\sum_{d = 0}^n (2n-1)^d 
\leq (2n)^{n+1}
$ vertices.
In \cref{rem:tight}, we comment on the tightness of the bounds in \cref{prop:soe-pos-exp-size}.

\begin{theorem} \label{prop:soe-pos-exp-size}
	Writing $n \coloneqq |I| = |J|$,
	the conditions of \cref{thm:soe-pos} are equivalent to the following: For every tree $t \in T(\Gamma_{M})$ of depth $\leq n$ and out-degree $\leq 2n-1$, $\soe(t_{\matvec{A}}) = \soe(t_{\matvec{B}})$.
\end{theorem}
\begin{proof}
	Given \cref{lem:soe-pos-exp-size}, it suffices to show that $\soe(t_{\matvec{A}}) = \soe(t_{\matvec{B}})$ for all trees $t \in T(\Gamma_{M})$ of depth at most~$n$.	
	To ease notation, we suppose wlog that $I$ and $J$ are disjoint.
	Similar to the set-up of the proof of \cref{lem:pearcy},
	define 
	for a tree $t \in T(\Gamma_M)$  the block vector $t_{\bothmat} \coloneqq \left( \begin{smallmatrix}
		t_{\matvec A} & t_{\matvec B}
	\end{smallmatrix}\right)^T \in \mathbb{K}^{I \cup J}$.
	We construct for every $d \geq 0$ a spanning set of vectors for the space $T_{\bothmat}^{\leq d}$ spanned by the $t_{\bothmat}$ for all $t \in T(\Gamma_M)$ of depth $\leq d$ and out-degree $\leq 2n-1$.
	For a tree $t \in T(\Gamma_M)$ and $i \geq 1$, write $t^{\odot i}$ for the tree obtained by gluing $i$ copies of $t$. Furthermore, let $t^{\odot 0} \coloneqq \boldsymbol{1}$.
	\begin{claim} \label{cl:tree-spanning}
		For every $d \geq 0$, there exist a set $T \subseteq  T(\Gamma_M)$ of trees of depth $\leq d$,
		whose roots have out-degree $\leq 1$, and all whose out-degrees are $\leq 2n-1$
		such that 
		\[
			T^{\leq d}_{\bothmat} = \spn \{t_\bothmat^{\odot i} \mid t \in T, 0 \leq i \leq 2n-1\}.
		\]
	\end{claim}
	\begin{claimproof}
		For $d = 0$, the singleton containing the one-vertex tree is as desired.
		For $d \geq 1$, 
		consider the following equivalence relation on $I \cup J$:
		Let $i \sim_d j$ if and only if $t_\bothmat(i) = t_\bothmat(j)$ for all $t \in T(\Gamma_M)$ of depth $\leq d$.
		Observe that if $i \not\sim_d j$ then there exists $s \in T(\Gamma_M)$ of depth $\leq d$ and with root of degree $1$ such that $s_\bothmat(i) \neq s_\bothmat(j)$.
		Indeed, if the root of a tree $t$ such that $t_\bothmat(i) \neq t_\bothmat(j)$ has higher degree then $t$ is the gluing product of multiple trees with root of degree one and one of these factors is as desired.
		
		Let $S \subseteq T(\Gamma_M)$ be a set of trees of depth at most~$d$ and with roots of degree~$1$ such that $i \sim_{d} j$ if and only if $s_\bothmat(i) = s_\bothmat(j)$ for all $s \in S$.
		By \cref{fact:vd},
		\[
		T^{\leq d}_{\bothmat} = \spn \{s_\bothmat^{\odot i} \mid s \in S, 0 \leq i \leq 2n-1\}.
		\]
		For $s \in S$, write $s'$ for the tree rooted at the unique child of the root of $s$. Moreover, write $g^s \in \Gamma_M$ for the element associated to the edge incident to the root of $s$.
		The tree $s'$ is of depth at most~$d-1$.
		Write $Q \subseteq T(\Gamma_M)$ for the set of trees of depth at most $d-1$ with roots of out-degree $\leq 1$, and all whose out-degrees are $\leq 2n-1$, which is guaranteed to exist by induction. Then, by linearity,
		\begin{align*}
			T^{\leq d}_{\bothmat} &= \spn \{s_\bothmat^{\odot i} \mid s \in S, 0 \leq i \leq 2n-1\} \\
			& = \spn \{(g^ss')_\bothmat^{\odot i} \mid s \in S, 0 \leq i \leq 2n-1\} \\
			& \leq \spn \{(g( q^{\odot j}))_\bothmat^{\odot i} \mid q \in Q, g \in \Gamma_M, 0 \leq i \leq 2n-1, 0 \leq j \leq 2n-1\}\\
			& \leq T^{\leq d}_{\bothmat}.
		\end{align*}
		Note that all trees $g(q^{\odot j})$ appearing in the final set are of depth $\leq d$, have roots of out-degree $\leq 1$, and only vertices of out-degree $\leq 2n-1$.
	\end{claimproof}
	
		For a vector $v \in  \mathbb{K}^{I \cup J}$, write $\soe_{\matvec{A}} v \coloneqq \sum_{i \in I} v(i)$
	and analogously $\soe_{\matvec{B}} v \coloneqq \sum_{j \in J} v(j)$.
	Let $t \in T(\Gamma_{M})$ be a tree of depth at most~$n$.
	By \cref{cl:tree-spanning}, there exist $t^1, \dots, t^r \in \Gamma_M$ of depth at most~$n$ and out-degrees at most~$2n-1$ and coefficients $\alpha_1, \dots, \alpha_r \in \mathbb{K}$ such that
	\( t_{\bothmat} = \sum_{i=1}^r \alpha_i t^i_{\bothmat}\).
	Hence,
	\[
	\soe(t_{\matvec{A}}) 
	= \soe_{\matvec{A}}(t_\bothmat) 
	= \sum \alpha_i\soe_{\matvec{A}}(t^i_{\bothmat})
	= \sum \alpha_i\soe(t^i_{\matvec{A}})
	= \sum \alpha_i\soe(t^i_{\matvec{B}})
	= \soe(t_{\matvec{B}}). 
	\]
	Thus, the assertion in \cref{prop:soe-pos-exp-size} implies the assertion of \cref{lem:soe-pos-exp-size}.
\end{proof}

\section{Bilabelled Graphs and Homomorphism Tensors}

In this section, bilabelled graphs and their homomorphism tensors are introduced.
These are the objects to which the results from \cref{sec:three-variants} are applied in the subsequent sections.
Bilabelled graphs and homomorphism tensors provide a language for constructing more complicated graphs from simple building blocks while keeping track of homomorphism counts.
They have also been used in the seminal paper \cite{mancinska_quantum_2020}.

\subsection{Bilabelled Graphs and Combinatorial Operations}

\label{sec:lblgraphs}
For $\ell \in \mathbb{N}$, an \emph{$\ell$-labelled}\footnote{Note that this definition is dual to what what is known as a  \emph{graph labelling}, where labels are assigned to vertices of a graph \cite{gallian_graph_2018}, rather than vertices to labels as here. However, our definition is in line with \cite{mancinska_quantum_2020,lovasz_operations_1967}.} graph $\boldsymbol{F}$ is a tuple $\boldsymbol{F} = (F, \vec{v})$ where $F$ is a graph and $\vec{v} \in V(F)^\ell$. The vertices in $\vec v$ are not necessarily distinct, i.e.\@ vertices may have several labels.
Write $\mathcal{G}(\ell)$ for the class of all $\ell$-labelled graphs.

The operation of \emph{gluing} two $\ell$-labelled graphs
$\boldsymbol{F} =(F,\vec{u})$ and $\boldsymbol{F}' = (F', \vec{u}')$ yields the $\ell$-labelled graph $\boldsymbol{F} \odot \boldsymbol{F}'$ obtained by taking the disjoint union of $F$ and $F'$ and pairwise identifying the vertices $u_i$ and $v_i$ to become the $i$-th labelled vertex, for $i \in [\ell]$, and removing any multiedges in the process. 
In fact, since we consider homomorphisms into simple graphs, multiedges can always be omitted. 
Likewise, self-loops can also be disregarded since the number of homomorphisms $F \to G$ where $F$ has a self-loop, and $G$ does not is always zero. 
We henceforth tacitly assume that all graphs are simple.

For $\ell_1, \ell_2 \in \NN$, an \emph{$(\ell_1,\ell_2)$-bilabelled graph} $\boldsymbol{F}$ is a tuple $(F, \vec{u}, \vec{v})$ for $\vec{u} \in V(F)^{\ell_1}$, $\vec{v} \in V(F)^{\ell_2}$. If $\vec{u} = (u_1, \dots, u_{\ell_1})$ and $\vec{v} = (v_1, \dots, v_{\ell_2})$, it is usual to say that the vertex $u_i$, resp.\@ $v_i$, is labelled with the $i$-th \emph{in-label}, resp.\@ \emph{out-label}.
Write $\mathcal{G}(\ell_1, \ell_2)$ for the class of all $(\ell_1, \ell_2)$-bilabelled graphs.

The \emph{reverse} of an $(\ell_1, \ell_2)$-bilabelled
graph $\boldsymbol{F} =(F,\vec{u}, \vec{v})$ is defined to be the $(\ell_2,\ell_1)$-bilabelled graph $\boldsymbol{F}^* = (F, \vec{v}, \vec{u})$ with roles of in- and out-labels interchanged. 
The \emph{concatenation} or \emph{series composition} of an $(\ell_1, \ell_2)$-bilabelled graph $\boldsymbol{F} = (F, \vec{u}, \vec{v})$ and an $(\ell_2, \ell_3)$-bilabelled graph $\boldsymbol{F}' = (F', \vec{u}', \vec{v}')$, $\ell_3 \in \mathbb{N}$, denoted by $\boldsymbol{F} \cdot \boldsymbol{F}'$ is the $(\ell_1, \ell_3)$-bilabelled graph obtained by taking the disjoint union of $F$ and $F'$ and identifying for all $i \in [\ell_2]$ the vertices $v_i$ and $u'_i$. The in-labels of $\boldsymbol{F} \cdot \boldsymbol{F}'$ lie on $\vec{u}$ while its out-labels are positioned on $\vec{v}'$. 
The \emph{parallel composition} of $(\ell_1, \ell_2)$-bilabelled graphs $\boldsymbol{F} = (F, \vec{u}, \vec{v})$ and $\boldsymbol{F}' = (F', \vec{u}', \vec{v}')$ denoted by $\boldsymbol{F} \odot \boldsymbol{F}'$ is obtained by taking the disjoint union of $F$ and $F'$ and identifying $u_i$ with $u'_i$, and $v_j$ with with $v'_j$ for $i \in [\ell_1]$ and $j \in [\ell_2]$.

\subsection{Homomorphism Tensors and Homomorphism Tensor Maps}

For graphs $F$ and $G$, let $\hom(F, G)$ denote the number of homomorphisms from $F$ to~$G$, i.e.\@ the number of mappings $h \colon V(F) \to V(G)$ such that $v_1v_2 \in E(F)$ implies $h(v_1)h(v_2) \in E(G)$. 
For a graph class $\mathcal{F}$ and graphs $G$ and $H$, write $G \equiv_{\mathcal{F}} H$ if $G$ and $H$ are \emph{homomorphism indistinguishable over $\mathcal{F}$}, i.e.\@ $\hom(F, G) = \hom(F, H)$ for all $F \in \mathcal{F}$.

For an $\ell$-labelled graph $\boldsymbol{F} = (F, \vec{v})$ and $\vec{w} \in V(G)^\ell$, let $\hom(\boldsymbol{F},G,\vec{w})$ denote the number of homomorphisms~$h$ from $F$ to $G$ such that $h(v_i) = w_i$ for all $i \in [\ell]$. Analogously, for an $(\ell_1,\ell_2)$-bilabelled graph $\boldsymbol{F}' = (F', \vec{u},\vec{v})$ and $\vec{x} \in V(G)^{\ell_1}$, $\vec{y} \in V(G)^{\ell_2}$, let $\hom(F',G,\vec{x},\vec{y})$ denote the number of homomorphisms $h \colon F' \to G$ such that $h(u_i) = x_i$  and $h(v_j) = y_j$ for all $i \in [\ell_1]$, $j \in [\ell_2]$.
More succinctly, we write $\boldsymbol{F}_G \in \mathbb{N}^{V(G)^\ell}$ for the \emph{homomorphism tensor} defined by letting $\boldsymbol{F}_G(\vec{w}) \coloneqq \hom(\boldsymbol{F},G,\vec{w})$ for all $\vec{w} \in V(G)^\ell$. 
Similarly, for a bilabelled graph~$\boldsymbol{F}'$, $\boldsymbol{F}'_G \in \mathbb{N}^{V(G)^{\ell_1} \times V(G)^{\ell_2}}$ is the matrix defined as 
$\boldsymbol{F}'_G(\vec{x},\vec{y}) \coloneqq \hom(\boldsymbol{F},G,\vec{x},\vec{y})$ for all $\vec{x} \in V(G)^{\ell_1}$, $\vec{y} \in V(G)^{\ell_2}$. 

Letting this construction range over all right-hand side graphs $G$, the map $G \mapsto \boldsymbol{F}_G$ becomes a tensor map, the \emph{homomorphism tensor map} induced by $\boldsymbol{F}$. It is easy to see that homomorphism tensor
maps are equivariant. 

	\begin{figure}
	\centering
	\begin{subfigure}[t]{.4\linewidth}
		\centering
		\includegraphics[page=16,scale=.9]{figure-basal}
		\caption{$k$-labelled graph $\boldsymbol{1} = \boldsymbol{1}^k$.}
		\label{fig:identity-graph-labelled}
	\end{subfigure}
	\begin{subfigure}[t]{.4\linewidth}
		\centering
		\includegraphics[page=17,scale=.9]{figure-basal}
		\caption{$(1,1)$-bilabelled graph $\boldsymbol{A}$.}
		\label{fig:adjacency11}
	\end{subfigure}
	\caption[The (bi)labelled graphs from \cref{ex:adjacency}]{The (bi)labelled graphs from \cref{ex:adjacency}  in wire notation of \cite{mancinska_quantum_2020}: A vertex carries in-label (out-label) $i$ if it is connected to the number $i$ on the left (right) by a wire. Actual edges and vertices of the graph are depicted in black.}
\end{figure}

\begin{example} \label{ex:adjacency}
	For $k \geq 1$, let $\boldsymbol{1}^k$ denote the labelled graph consisting of $k$~isolated vertices with distinct labels $(1, \dots, k)$.
	Then, $\boldsymbol{1}_G^k$ is the uniform tensor in $\mathbb{K}^{V(G)^k}$ where every entry is equal to $1$.
Let $\boldsymbol{A}$ denote the $(1,1)$-bilabelled graph consisting of a single edge whose endpoints each carry one label. For every graph $G$, the matrix $\boldsymbol{A}_G$ is the adjacency matrix of~$G$.
\end{example}

For $\mathbb{K} \in \{\mathbb{Q,R,C}\}$,
homomorphism tensors give rise to the $\mathbb{K}$-vector spaces of our main interest and their endomorphisms. For a set $\mathcal{R}$ of $\ell$-labelled graphs, the tensors $\boldsymbol{R}_G$ for $\boldsymbol{R} \in \mathcal{R}$ span a subspace of~$\mathbb{K}^{V(G)^\ell}$, which is denoted by $\mathbb{K}\mathcal{R}_G$. 
Moreover, the tensors $\boldsymbol{S}_G$ for an $(\ell,\ell)$-bilabelled graph~$\boldsymbol{S}$ induces an endomorphisms of $\mathbb{K}^{V(G)^\ell}$.

		\subsection{Algebraic and Combinatorial Operations on Homomorphism Tensor Maps}
		\label{sec:opcorrespond}

		As outlined in \cref{sec:tensors}, tensor maps naturally admit a plenitude of algebraic operations.
		Crucially, many operations when applied to homomorphism tensor maps correspond to operations on (bi)labelled graphs.
		This observation due to \cite{lovasz_contractors_2009,mancinska_quantum_2020} is illustrated by the following examples.
		\begin{description}[parsep=0pt]
			\item[Sum-of-Entries and dropping labels]  Given a $k$-labelled graph $\boldsymbol{F} = (F, \vec{u})$, let $\soe(\boldsymbol{F})$ denote the $0$-labelled graph $(F, ())$. Then, for every graph $G$, $\soe(\boldsymbol{F})_G  = \hom(F, G) = \sum_{\vec{v} \in V(G)^k} \boldsymbol{F}_G(\vec{v}) = \soe(\boldsymbol{F}_G)$.
			For an example, see \cref{fig:soe}.
			\item[Matrix Product and series composition] Let an $(\ell_1, \ell_2)$-bilabelled graph $\boldsymbol{F} = (F, \vec{u}, \vec{v})$ and an $(\ell_2, \ell_3)$-bilabelled graph $\boldsymbol{F}' = (F', \vec{u}', \vec{v}')$ be given. Then for every graph $G$, vertices $\vec{x} \in V(G)^{\ell_1}$, and $\vec{y} \in V(G)^{\ell_3}$,  $(\boldsymbol{F} \cdot \boldsymbol{F}')_G(\vec{x},\vec{y}) = \sum_{\vec{w} \in V(G)^{\ell_2}} \boldsymbol{F}_G(\vec{x}, \vec{w}) \boldsymbol{F}'_G(\vec{w}, \vec{y}) = (\boldsymbol{F}_G \cdot \boldsymbol{F}'_G)(\vec{x}, \vec{y})$.
			A similar operation corresponds to the matrix-vector product, where $\boldsymbol{F}'$ is assumed to be $\ell_2$-labelled.
			For an example, see \cref{fig:seriesprod}.
			\item[Schur Product and gluing] The gluing product $\boldsymbol{F} \odot \boldsymbol{F}'$ of two $k$-labelled graphs $\boldsymbol{F} = (F, \vec{u})$ and $\boldsymbol{F}' = (F', \vec{u}')$ corresponds to the Schur product of the homomorphism tensors. That is, for every graph $G$ and $\vec{v} \in V(G)^k$, $(\boldsymbol{F} \odot \boldsymbol{F}')_G(\vec{v}) = \boldsymbol{F}_G(\vec{v}) \boldsymbol{F}'_G(\vec{v}) = (\boldsymbol{F}_G \odot \boldsymbol{F}'_G)(\vec{v})$.
			Moreover, the \emph{inner-product} of $\ell$-labelled graphs $\boldsymbol{F}$, $\boldsymbol{F}'$ can be defined by $\left< \boldsymbol{F}, \boldsymbol{F}' \right> = \soe(\boldsymbol{F} \odot \boldsymbol{F}')$. It corresponds to the standard inner-product on the tensor space.
			\item[Tensor product and disjoint union] For a $k$-labelled graph $\boldsymbol{F}=(F,(u_1,\ldots,u_k))$ and an $\ell$-labelled graph $\boldsymbol{F'} =(F',(v_1,\ldots,v_\ell))$, the tensor map $\boldsymbol{F} \otimes \boldsymbol{F}'$ is the homomorphism tensor map corresponding to the $(k+\ell)$-labelled graph $(F \otimes F',(u_1,\ldots,u_k,v_{1},\ldots,v_\ell))$, where $F\otimes F'$ is the disjoint union of $F$ and $F'$. 
			\item[Traces and identifying and dropping labels] Given a $(k,k)$-bilabelled graph $\boldsymbol{F} = (F, \vec{u}, \vec{v})$, let $\tr(\boldsymbol{F})$ denote the $0$-labelled graph obtained from $\boldsymbol{F}$ by identifying $u_i$ with $v_i$ for $i \in [k]$ and dropping the labels. 
			Then, for every graph $G$, $\tr(\boldsymbol{F})_G = \hom(\tr \boldsymbol{F}, G) =  \sum_{\vec{v} \in V(G)^k} \boldsymbol{F}_G(\vec{v},\vec{v})$.	
			For an example, see \cref{fig:traces}.
		\end{description}

\section{Cycles, Paths, and Trees}

Two graphs $G$ and $H$ with adjacency matrices $\boldsymbol{A}_G$ and $\boldsymbol{A}_H$ are isomorphic if and only if there is a matrix~$X$ over the non-negative integers such that $X\boldsymbol{A}_G=\boldsymbol{A}_H X$ and $X\boldsymbol{1}=X^T\boldsymbol 1=\boldsymbol 1$, where $\boldsymbol{1}$ is the all-ones vector. Writing the constraints as linear equations whose variables are the entries of $X$, we obtain a system $\Fiso(G, H)$ that has a non-negative integer solution if and only if $G$ and $H$ are isomorphic. A combination of results from  \cite{tinhofer_note_1991,dvorak_recognizing_2010} shows that $\Fiso(G, H)$ has a non-negative rational solution if and only if $G$ and $H$ are homomorphism indistinguishable over the class of trees, and by \cite{dell_lovasz_2018}, 
$\Fiso(G, H)$ has an arbitrary rational solution if and only if $G$ and $H$ are homomorphism indistinguishable over the class of paths.

In this section, we reprove these results showcasing our \cref{thm:wiegmann,thm:soe,thm:soe-pos}. These theorems contain the algebraic core of the arguments while the correspondence between (bi)labelled graphs and their homomorphism tensors (\cref{sec:opcorrespond}) provides the necessary combinatorial insights.

\begin{figure}
\begin{subfigure}{\linewidth}
	\[
		\raisebox{-.2\height}{\includegraphics[page=17,scale=.9]{figure-basal.pdf}} \cdot 
		\raisebox{-.2\height}{\includegraphics[page=18,scale=.9]{figure-basal.pdf}} = 
		\raisebox{-.2\height}{\includegraphics[page=19,scale=.9]{figure-basal.pdf}}
	\]
	\caption{Series products of bilabelled graphs correspond to matrix products of their homomorphism tensors. The left most $(1,1)$-bilabelled graph is $\boldsymbol{A}$, the one whose homomorphism tensor is the adjacency matrix. 
	The $(1,1)$-bilabelled paths with labels at vertices of degree at most $1$ form an involution monoid.}
	\label{fig:seriesprod}
\end{subfigure}
\begin{subfigure}{\linewidth}
	\[
	\tr\left(
	\raisebox{-.2\height}{\includegraphics[page=19,scale=.9]{figure-basal.pdf}} 
	\right)
	=
	\raisebox{-.4\height}{\includegraphics[page=21,scale=.9]{figure-basal.pdf}}
	\]
	\caption{Identifying opposing labels and unlabelling of a bilabelled graph corresponds to taking the trace of its homomorphism tensor.}
	\label{fig:traces}
\end{subfigure}
\begin{subfigure}{\linewidth}
	\[
	\soe\left(
	\raisebox{-.2\height}{\includegraphics[page=19,scale=.9]{figure-basal.pdf}} 
	\right)
	=
	\raisebox{-.2\height}{\includegraphics[page=20,scale=.9]{figure-basal.pdf}}
	\]
	\caption{Unlabelling a bilabelled graph corresponds to taking the sum-of-entries of its homomorphism tensor.}
	\label{fig:soe}
\end{subfigure}
\caption{Combinatorial operations on bilabelled graphs.}
\end{figure}

\begin{corollary}\label{cor:cycles}
	For graphs $n$-vertex simple graphs $G$ and $H$, the following are equivalent:
	\begin{enumerate}
		\item $G$ and $H$ are homomorphism indistinguishable over the class of cycles,
		\item $G$ and $H$ are homomorphism indistinguishable over the class of cycles on at most $2n^2-1$~vertices,
		\item there exists an orthogonal $X \in \mathbb{R}^{V(H) \times V(G)}$ such that $X\boldsymbol{A}_G = \boldsymbol{A}_H X$.
	\end{enumerate}
\end{corollary}
\begin{proof}
Apply \cref{thm:wiegmann} with $I \coloneqq V(G)$ and $J \coloneqq V(H)$, and $\matvec{A} \coloneqq (\boldsymbol{A}_G)$ and $\matvec{B} \coloneqq (\boldsymbol{A}_H)$. 
The series products of the $(1,1)$-bilabelled edge $\boldsymbol{A}$, cf.\ \cref{ex:adjacency}, with itself are precisely the $(1,1)$-bilabelled paths with labels at the vertices of degree~$\leq 1$, cf.\@ \cref{fig:seriesprod}. 
Taking the traces of their homomorphism matrices amounts to identifying the labels of these paths and thus counting homomorphisms from cycles into $G$ and $H$, cf.\@ \cref{fig:traces}.

By \cref{lem:pearcy}, it suffices to consider words in $\boldsymbol{A}_G$ and $\boldsymbol{A}_H$ of length at most $2n^2-1$. Each letter corresponds to an edge. 
Thus, homomorphism counts from cycles on at most $2n^2-1$ vertices suffice.
\end{proof}
By Newton's identities, cf.\ \cite[Proposition~1]{dawar_descriptive_2019}, considering cycles on at most~$n$ vertices suffices.
The bound in \cref{cor:cycles} is suboptimal as it is derived from the more general \cref{lem:pearcy}, which contrary to Newton's identities gives a criterion of simultaneous orthogonal similarity of multiple matrices, cf.\ \cite{pappacena_upper_1997}.
For paths, we obtain an analogous result:

\begin{corollary} \label{cor:paths}
	For $n$-vertex  simple graphs $G$ and $H$, the following are equivalent:
	\begin{enumerate}
		\item $G$ and $H$ are homomorphism indistinguishable over the class of paths,
		\item $G$ and $H$ are homomorphism indistinguishable over the class of paths on at most $2n$~vertices,
		\item there exists a pseudo-stochastic $X \in \mathbb{Q}^{V(H) \times V(G)}$ such that $X\boldsymbol{A}_G = \boldsymbol{A}_H X$.
	\end{enumerate}
\end{corollary}
\begin{proof}
Recall the  proof of \cref{cor:cycles} and invoke \cref{thm:soe}. Taking sums-of-entries of homomorphism matrices of $(1,1)$-bilabelled paths amounts to counting homomorphisms from the underlying unlabelled paths into $G$ and $H$, cf.\@ \cref{fig:soe}.
By \cref{prop:soe-quadratic-length}, it suffices  to consider words in $\boldsymbol{A}_G$ and $\boldsymbol{A}_H$ of length at most $2n-1$.
Each letter corresponds to an edge.
Thus, homomorphism counts from paths on at most $2n$ vertices suffice.
\end{proof}

The classical characterisation \cite{tinhofer_graph_1986} of homomorphism indistinguishability over trees involves a non-negativity condition on the matrix $X$.
While such an assumption appears natural from the viewpoint of solving the system of equations for fractional isomorphism, it lacks an algebraic or combinatorial interpretation. Using \cref{thm:soe-pos}, we reprove this known characterisation and give an alternative description that emphasises its graph-theoretic origin.
Here, the \emph{depth} of a rooted tree $(T,r)$, $r \in V(T)$,  is the maximum number of edges on any path from $r$ to a leaf.
\begin{corollary} \label{cor:trees}
	For $n$-vertex simple graphs $G$ and $H$, the following are equivalent:
	\begin{enumerate}
		\item $G$ and $H$ are homomorphism indistinguishable over the class of trees,
		\item $G$ and $H$ are homomorphism indistinguishable over all trees $T$ for which there exists $r \in V(T)$ such that $(T, r)$ is of depth at most~$n$ and maximum out-degree at most~$2n-1$,
		\item $G$ and $H$ are homomorphism indistinguishable over all trees on at most~$(2n)^{n+1}$ vertices,
		\item there exists a pseudo-stochastic matrix $X \in \mathbb{Q}^{V(H) \times V(G)}$ satisfying $X \boldsymbol{A}_G = \boldsymbol{A}_H X$  and one of the following equivalent conditions holds:
		\begin{enumerate}
			\item all entries of $X$ are non-negative,\label{trees}
			\item $X\boldsymbol{T}_G = \boldsymbol{T}_H$ for all $1$-labelled trees $\boldsymbol{T} \in \mathcal{T}$,\label{trees1}
			\item $X$ preserves the Schur product on $\mathbb{R}\mathcal{T}_G$, the space spanned by the  $\boldsymbol{T}_G$ for $\boldsymbol{T} \in \mathcal{T}$, i.e.\@ $X(u \odot v) = (Xu) \odot (Xv)$ for all $u, v \in \mathbb{R}\mathcal{T}_G$.\label{trees2}
		\end{enumerate}
	\end{enumerate}
\end{corollary}
\begin{proof}
	By \cref{prop:soe-pos-exp-size}, the first two assertions are equivalent. 
	A tree as in the second assertion has at most $\sum_{d=0}^n (2n-1)^d \leq (2n)^{n+1}$ vertices.
	Hence, the third assertion implies the second.
	Clearly, the first assertion implies the third.
	
	For the last assertion, consider the following argument:
	The equivalence of \cref{trees,trees1} is immediate from \cref{lem:invmodsoe-pos}. 
	Assuming \cref{trees1}, \cref{trees2}  follows since $X(\boldsymbol{T}_G \odot \boldsymbol{S}_G) = X((\boldsymbol{T} \odot \boldsymbol{S})_G) = (\boldsymbol{T} \odot \boldsymbol{S})_H = \boldsymbol{T}_H \odot \boldsymbol{S}_H$ for all $\boldsymbol{T}, \boldsymbol{S} \in \mathcal{T}$. 
	Conversely, by induction on the structure of $\boldsymbol{T} \in \mathcal{T}$, if $\boldsymbol{T} = \boldsymbol{A} \cdot \boldsymbol{S}$  for some $\boldsymbol{S} \in \mathcal{T}$  then $X \boldsymbol{T}_G = \boldsymbol{A}_H  X  \boldsymbol{S}_G = \boldsymbol{T}_H$ by the assumption $X \boldsymbol{A}_G = \boldsymbol{A}_H X$.
	If $\boldsymbol{T} = \boldsymbol{R} \odot \boldsymbol{S}$ for some $\boldsymbol{R}, \boldsymbol{S} \in \mathcal{T}$ then the claim follows immediately from \cref{trees2}.
\end{proof}

We finally comment on the optimality of the bounds in \cref{cor:trees}.

	\begin{remark} \label{rem:tight}
	In \cref{sec:bounded-degree-trees}, we argue that homomorphism counts of constant degree trees are not as expressive as homomorphism counts from all trees.
	In particular, by \cref{thm:degreeGH}, the bound on the maximum degree in \cref{cor:trees} cannot be replaced with a constant.
	Furthermore, by \cite{furer_weisfeiler-lehman_2001}, there exist graphs $G$ and $H$ on $n$~vertices which are distinguished by Colour Refinement but only in $\Theta(n)$ iterations.
	Thus, by \cite{dell_lovasz_2018}, these graphs are homomorphism indistinguishable over all trees $T$ for which there exists $r \in V(T)$ such that the rooted tree $(T, r)$ is of depth $\Theta(n)$.
	Thereby, the bound in \cref{cor:trees} on the depth of the trees is asymptotically tight.
\end{remark}

\section{Cyclewidth, Pathwidth, Treewidth, Treedepth, and Trees of Bounded Degree}
\label{sec:graphclasses}

In \cref{cor:cycles,cor:paths,cor:trees}, the machinery from \cref{sec:three-variants} was applied to involution monoids which are generated by a single non-trivial generator, namely the bilabelled edge, cf.\ \cref{fig:adjacency11}.
In this section, we consider involution monoids which are generated by more than one non-trivial generator.
In the language of \cref{thm:wiegmann,thm:soe,thm:soe-pos}, this amounts to considering multiple matrices.
As before, the matrices are homomorphism matrices of bilabelled graphs. 
Using multiple such graphs permits the treatment of more complicated graph classes such the classes of graphs of bounded cycle-, path-, treewidth, and treedepth.

The following subsections feature four different algebro-combinatorial setups, which are summarised in \cref{fig:interplay}. The algebraic structure of the considered class of (bi)labelled graphs determines the domain of the matrix variables in the matrix equations whose feasibility is equivalent to homomorphism indistinguishability over the family of underlying unlabelled graphs.
Domains covered by our results are unitary matrices, pseudo-stochastic matrices, and doubly stochastic matrices.
In some cases, feasibility over two of these possible domains coincides (\cref{sec:treedepth}).

\begin{figure}
	\centering
	\begin{tikzpicture}
		\draw [rounded corners, draw=lightgray, dashed, ultra thick] (-6, 2) rectangle (-.1, -4.5);
		\draw [rounded corners, draw=lightgray, dashed, ultra thick] (6, 2) rectangle (.1, -4.5);
		\node [align=center] at (-3, 1.5) {\textbf{pseudo-stochastic solutions}};
		\node [align=center] at (3, 1.5) {\textbf{doubly stochastic solutions}};
		
		\draw [rounded corners, draw=lightgray, dashed, ultra thick] (3, 2.3) rectangle (-3, 4);
		\node [align=center] at (0, 3.7) {\textbf{orthogonal solutions}};
		\node (cyc) [text width=6cm, align=center] at (0, 2.9) {cyclic $\tr(\mathcal{S})$ \\ \smaller e.g.\  cycles, cycle\-width  (\cref{sec:path-cycle})};
		
		\node (e) [text width=5cm, align=center, fill=white] at (0, .5) {$\mathcal{R} = \mathcal{S}\boldsymbol{1}$ gluing-closed  \\ \smaller e.g.\  treedepth  (\cref{sec:treedepth})};
		\node (a) [text width=5cm, align=center] at (-3, -1.5) {$\mathcal{R} = \mathcal{S}\boldsymbol{1}$ \\ \smaller e.g.\  paths, pathwidth (\cref{sec:path-cycle}) };
		\node (c) [text width=5cm, below of=a, align=center, yshift=-1cm] {$\mathcal{R}$ inner-product compatible \\ \smaller e.g.\  bounded degree trees  (\cref{sec:bounded-degree-trees})};
		
		\node (b) [text width=5cm, align=center] at (3, -1.5) {$\mathcal{R}$ agglutinatively generated by $\mathcal{S}$ \\ \smaller e.g.\  trees, treewidth (\cref{sec:trees})};
		\node (d) [text width=5cm, below of=b, yshift=-1cm, align=center] {$\mathcal{R}$ gluing-closed \\ \smaller e.g.\  Eilenberg--Moore categories of comonads  (\cref{sec:treedepth})};
		\draw [->, thick] (a) -- (c);
		\draw [->, thick] (d) -- (c);
		\draw [->, thick] (b) -- (d);
		\draw [->, thick] (e) -- (a); 
		\draw [->, thick] (e) -- (b);
	\end{tikzpicture}
	\caption[Interplay of a family of labelled graphs $\mathcal{R} \subseteq \mathcal{G}(k)$ and a family of bilabelled graphs~$\mathcal{S} \subseteq \mathcal{G}(k,k)$.]{Interplay of a family of labelled graphs $\mathcal{R} \subseteq \mathcal{G}(k)$ and a family of bilabelled graphs~$\mathcal{S} \subseteq \mathcal{G}(k,k)$ yielding matrix equations with orthogonal, pseudo-stochastic, or doubly stochastic solutions. Arrows indicate implications, e.g.\ every gluing-closed family of labelled graphs $\mathcal{R}$ is inner-product compatible.}
	\label{fig:interplay}
\end{figure}

\subsection{Preliminaries: Tree, Path, and Cycle Decompositions}

In preparation for the following sections,
we recall the well-known notion of a tree decomposition in the following slightly more general form.

\begin{definition} \label{def:decomposition}
	Let $F$ be a graph.
	An \emph{$F$-decomposition} of a graph $G$ is a pair $(F, \beta)$ and $\beta$ is map $V(F) \to 2^{V(G)}$ such that
	\begin{enumerate}
		\item the union of the $\beta(v)$ for $v \in V(F)$ is equal to $V(G)$,
		\item for every edge $e \in E(G)$ there exists $v \in V(F)$ such that $e \subseteq \beta(v)$,
		\item for every vertex $u \in V(G)$ the set of vertices $v \in V(F)$ such that $u \in \beta(v)$ is connected in~$F$.
	\end{enumerate}
\end{definition}
The sets $\beta(v)$ for $v \in V(F)$ are called the \emph{bags of $(F, \beta)$}.
The \emph{width} of $(F, \beta)$ is the maximum over all $\abs{\beta(v)} - 1$ for $v \in V(F)$.
An $F$-decomposition is called a \emph{tree decomposition} if $F$ is a tree, a \emph{path decomposition} if $F$ is a path, and a \emph{cycle decomposition} if $F$ is a cycle.\footnote{For convenience, we regard the cliques $K_1$ and $K_2$ as cycles.}
The \emph{tree-} / \emph{path-} / \emph{cyclewidth} of a graph $G$ is the minimum width of a tree/path/cycle decomposition of~$G$.
The following \cref{lem:bodlaender8} generalises \cite[Lemma~8]{bodlaender_partial_1998}.

\begin{lemma} \label{lem:bodlaender8}
	Let $k \geq 1$ and $F$ be a connected graph.
	If a graph $G$ has an $F$-decomposition of width at most $k$ and $|V(G)| \geq k+1$
	then there is an $F'$-decomposition $\beta \colon F' \to 2^{V(G)}$ of $G$ such that
	\begin{enumerate}
		\item $|\beta(t)| = k+1$ for all $t \in V(F')$, and
		\item $|\beta(s) \cap \beta(t)| = k$ for all $st \in E(F')$.
	\end{enumerate}
	The graph $F'$ can be obtained from $F$ by contracting and/or subdividing edges.
\end{lemma}
\begin{proof}
	If $|V(G)| = k+1$ then $F'$ can be taken to be the single vertex graph.
	If $|V(G)| > k+1$ then $F$ must contain at least one edge.
	Let $\beta \colon F \to 2^{V(G)}$ be the $F$-decomposition of width at most $k$.
	We repeatedly apply the following steps:
	\begin{itemize}
		\item If $st \in E(F)$ is such that $\beta(s) \subseteq \beta(t)$ or $\beta(t) \subseteq \beta(s)$ then the edge $st$ in $F$ can be contracted and the set $\beta(s) \cup \beta(t)$  can be taken to be the bag at the vertex obtained by contraction.
		\item If $st \in E(F)$ and $|\beta(s)| < k+1$ and $\beta(t) \not\subseteq \beta(s)$ then $\beta(s)$ can be enlarged by a vertex $v \in \beta(t) \setminus \beta(s)$.
		\item If $st \in E(F)$ and $|\beta(s)| = |\beta(t)| = k+1$ and $|\beta(s) \cap \beta(t)| < k$ then subdivide the edge $st$ in $F$ by introducing a fresh vertex $r$.
		Choose vertices $v \in \beta(s) \setminus \beta(t)$ and $w \in \beta(t) \setminus \beta(s)$ and let $\beta(r) \coloneqq (\beta(s) \setminus \{v\}) \cup \{w\}$.
	\end{itemize}
	If none of these operations can be applied, the decomposition is as desired.
\end{proof}

Since the class of trees (paths) is closed under contracting and subdividing edges, tree (path) decompositions satisfying the assertions of \cref{lem:bodlaender8} can be found for all graphs of bounded treewidth (pathwidth).

\subsection{Pathwidth and Cyclewidth: Generators for Involution Monoids}
\label{sec:path-cycle}

The families of bilabelled graphs considered in this section all are involution monoids in the following sense.
For $k \geq 1$, the class $\mathcal{G}(k,k)$ of all $(k,k)$\nobreakdash-bilabelled graphs forms an involution monoid whose binary operation is series composition, whose involution operation is reversal, and whose neutral element is the \emph{identity graph} $\boldsymbol{I} = (I, (1,\dots, k), (1, \dots, k))$ with $V(I) = [k]$ and $E(I) = \emptyset$, cf.\@ \cref{fig:identitygraph}.
This section features  subclasses  $\mathcal{S} \subseteq \mathcal{G}(k,k)$ 
which also form  involution monoids.
That is, they satisfy the following properties:
\begin{enumerate}
	\item $\boldsymbol{I} \in \mathcal{S}$,
	\item $\boldsymbol{S}^* \in \mathcal{S}$ for all $\boldsymbol{S} \in \mathcal{S}$,
	\item $\boldsymbol{S} \cdot \boldsymbol{S}' \in \mathcal{S}$ for all $\boldsymbol{S},\boldsymbol{S}' \in \mathcal{S}$.
\end{enumerate}

An example of an involution monoid of $(1,1)$-bilabelled graphs is the \emph{path monoid} of all $(1,1)$-bilabelled paths with labels at opposing ends, cf.\@ \cref{fig:seriesprod}.
In order to derive systems of equations with finitely many equations, we consider finite generating sets of involution monoids:

\begin{definition} \label{def:basal} \label{def:generates}
	Let $\mathcal{S}$ be an involution monoid. A set $\mathcal{B} \subseteq \mathcal{S}$ \emph{generates} $\mathcal{S}$ if
	\begin{enumerate}
		\item $\boldsymbol{I} \in \mathcal{B}$,\label{it:basal1}
		\item $\boldsymbol{B}^* \in \mathcal{B}$ for all $\boldsymbol{B} \in \mathcal{B}$,\label{it:basal2}
		\item for all $\boldsymbol{S} \in \mathcal{S}$ there exist $\boldsymbol{B}^1, \dots, \boldsymbol{B}^r \in \mathcal{B}$ such that $\boldsymbol{S} = \boldsymbol{B}^1 \cdots \boldsymbol{B}^r$.\label{it:basal3}
	\end{enumerate}
\end{definition}
For example, the path monoid is generated by the $(1,1)$-bilabelled graph $\boldsymbol{A}$ depicted in \cref{fig:adjacency11,fig:seriesprod} and the identity graph $\boldsymbol{I}$.

For a graph class $\mathcal{F}$ and $N \in \mathbb{N}$, 
write $\mathcal{F}_{\leq N} \coloneqq\{  F \in \mathcal{F} \mid  \lvert V(F) \rvert \leq N\}$.
For a class $\mathcal{S} \subseteq \mathcal{G}(k,k)$ of $(k,k)$-bilabelled graphs where $k \geq 1$, write
\(
	\soe(\mathcal{S}) \coloneqq \{\soe \boldsymbol{S} \mid \boldsymbol{S} \in \mathcal{S}\} 
\) and
\(
	\tr(\mathcal{S}) \coloneqq \{\tr \boldsymbol{S} \mid \boldsymbol{S} \in \mathcal{S}\}.
\)
Both \(\soe(\mathcal{S}) \) and \(\tr(\mathcal{S}) \)  are classes of unlabelled graphs.
The following \cref{thm:meta-paths-cycles} is immediate from \cref{thm:soe,thm:wiegmann,lem:pearcy,prop:soe-quadratic-length}.

\begin{theorem} \label{thm:meta-paths-cycles}
	Let $k \geq 1$.
	Let $\mathcal{S} \subseteq \mathcal{G}(k,k)$ be an involution monoid generated by $\mathcal{B} \subseteq \mathcal{S}$. 
	Let $G$ and $H$ be $n$-vertex graphs.
	Suppose that every graph in $\mathcal{B}$ has at most $b \in \mathbb{N} \cup \{\infty\}$ vertices 
	and let $N_1 \coloneqq 2n^{k}b \in \mathbb{N} \cup \{\infty\}$ and $N_2 \coloneqq 2n^{2k}b \in \mathbb{N} \cup \{\infty\}$.
	Then the following are equivalent:
	\begin{enumerate}
		\item $G$ and $H$ are homomorphism indistinguishable over $\soe(\mathcal{S})$,
		\item $G$ and $H$ are homomorphism indistinguishable over $\soe(\mathcal{S})_{\leq N_1}$,
		\item there exists a pseudo-stochastic $X \in \mathbb{Q}^{V(H)^k \times V(G)^k}$ such that $X\boldsymbol{B}_G = \boldsymbol{B}_H X$ for all $\boldsymbol{B} \in \mathcal{B}$.
	\end{enumerate}
	Furthermore, the following are equivalent:
	\begin{enumerate}
		\item $G$ and $H$ are homomorphism indistinguishable over $\tr(\mathcal{S})$,
		\item $G$ and $H$ are homomorphism indistinguishable over $\tr(\mathcal{S})_{\leq N_2}$,
		\item there exists an orthogonal $U \in \mathbb{R}^{V(H)^k \times V(G)^k}$ such that $U\boldsymbol{B}_G = \boldsymbol{B}_H U$ for all $\boldsymbol{B} \in \mathcal{B}$.
	\end{enumerate}
\end{theorem}
\begin{proof}
	In the set-up of \cref{thm:soe,thm:wiegmann}, let $I \coloneqq V(G)^k$, $J \coloneqq V(H)^k$, and $M \coloneqq \mathcal{B}$.
	Furthermore, let $\matvec{A}$ (respectively, $\matvec{B}$) be the sequence of homomorphism tensors $\boldsymbol{B}_G$ (respectively, $\boldsymbol{B}_H$) for $\boldsymbol{B} \in \mathcal{B}$.
	Words $w \in \Gamma_M$ corresponds to bilabelled graphs from $\mathcal{S}$ and vice-versa.
	The matrices $w_{\matvec{A}}$ and $w_{\matvec{B}}$ are homomorphism tensors of such a bilabelled graph.
	With these observations, \cref{thm:meta-paths-cycles} is immediate from \cref{thm:soe,thm:wiegmann,lem:pearcy,prop:soe-quadratic-length}.
\end{proof}

We remark that we are only interested in the order of magnitude of the parameters~$N_1$ and~$N_2$.
In order to state \cref{thm:meta-paths-cycles} more clearly, we chose to be slightly wasteful compared to \cref{prop:soe-quadratic-length,lem:pearcy}.

The remainder of this section features an application of \cref{thm:meta-paths-cycles} to homomorphism indistinguishability over graphs of bounded pathwidth and cyclewidth.
The prototypical example of an involution monoid is the family of graphs of pathwidth at most $k$.

\begin{definition} \label{def:pwk}
	Let $\mathcal{PW}^k$ denote the family of all $(k+1,k+1)$-bilabelled graphs $\boldsymbol{F} = (F, \vec{u}, \vec{v})$ such that $F$ admits a path decomposition $(P, \beta)$ of width at most $k$ with $u,v \in V(P)$ satisfying
	\begin{enumerate}
		\item $\beta(u) = \{u_1, \dots, u_{k+1}\}$ and $\beta(v) = \{v_1, \dots, v_{k+1}\}$,
		\item if $u \neq v$ then $\deg_P(u) = \deg_P(v) = 1$ and if $u = v$ then $\deg_P(u) = \deg_P(v) = 0$,
		\item $|\beta(s)| = k+1$ for all $s \in V(P)$ and $|\beta(s) \cap \beta(t)| = k$ for all $st \in E(P)$.\label{def:pwk4}
	\end{enumerate}
\end{definition}

The first two axioms of \cref{def:pwk} prescribe where in a path decomposition the labelled vertices have to be placed.
The last axiom makes subsequent arguments easier and does not constitute a loss of generality:
	It is easy to see that $\mathcal{PW}^k$ contains $\boldsymbol{I}$, is closed under reversal and series composition. 
Hence, it is an involution monoid.

\begin{lemma} \label{lem:pw-soe-tr}
	Let $k \geq 1$.
	\begin{enumerate}
		\item The class $\soe(\mathcal{PW}^k)$ is the class of all graphs of pathwidth at most $k$ on at least $k+1$ vertices.
		\item The class $\tr(\mathcal{PW}^k)$ is the class of all graphs of cyclewidth at most $k$ on at least $k+1$ vertices.
	\end{enumerate}
\end{lemma}
\begin{proof}
	By \cref{def:pwk}, every $F \in \soe(\mathcal{PW}^k)$ has pathwidth at most $k$ and at least $k+1$ vertices.
	Conversely, if $F$ has pathwidth at most $k$ and at least $k+1$ vertices then, by \cref{lem:bodlaender8}, there exists a path decomposition $(P, \beta)$ of $F$ satisfying \cref{def:pwk4} of \cref{def:pwk}. The labels can be arbitrarily placed on $F$ in accordance with \cref{def:pwk}.
	
	For the second claim,
	let $\boldsymbol{F} = (F, \vec{u}, \vec{v}) \in \mathcal{PW}^k$ be a bilabelled graph with path decomposition $(P, \beta)$ and $u, v\in V(P)$ as in \cref{def:pwk}.
	Let $C$ denote the cycle obtained from $P$ by making a fresh vertex $w$ adjacent to $u$ and $v$. Extend $\beta(w) \coloneqq \beta(u) \cup \beta(v)$. 
	In $\tr(\boldsymbol{F})$, the vertices $u_i$ and $v_i$ for $i \in [k+1]$ are respectively identified. 
	Hence, $(C,\beta)$ gives rise to a cycle decomposition of $\tr(\boldsymbol{F})$ of width at most $k$.
	
	Conversely, by \cref{lem:bodlaender8}, if $F$ has cyclewidth at most $k$ and at least $k+1$ vertices then there is a cycle decomposition $(C, \beta)$ of $F$ satisfying \cref{def:pwk4} of \cref{def:pwk}.
	Pick any vertex $w \in V(C)$. Let $P$ denote the path obtained from $C$ by replacing $w$ with two fresh vertices $u$ and $v$ adjacent to the two neighbours of $w$ respectively.
	Write $X \subseteq \beta(w)$ for the set of all vertices $x \in \beta(w)$ which do not appear in all bags of $(C, \beta)$, i.e.\@ $x \not\in \beta(y)$ for some $y \in V(C)$.
	For $x \in X$, let $c_1, \dots, c_r$ and $d_1, \dots, d_s$ denote the vertices of $C$ whose bags contain~$x$. 
	Suppose that $d_sd_{s-1}\dots d_1 w c_1 \dots c_r$ is a trail in $C$ and that $u$ was made adjacent to $c_1$, and $v$ to $d_1$.
	Construct a graph $F'$ from $F$ by replacing every $x \in X$ by two vertices $x'$ and $x''$ and making $x'$ adjacent to all neighbours of $x$ in $\beta(c_1) \cup \dots \cup \beta(c_r)$ and $x''$ adjacent to all neighbours of $x$ in $\beta(d_1) \cup \dots \cup \beta(d_s)$.
	Define a path decomposition $(P, \gamma)$ of $F'$ by letting $\gamma(u) \coloneqq (\beta(w) \setminus X) \cup \{x' \mid x\in X\}$ and $\gamma(v) \coloneqq (\beta(w) \setminus X) \cup \{x'' \mid x\in X\}$.
	Every other bag $\gamma(z)$ for $z \in V(P) \setminus \{u, v\}$ is obtained from $\beta(z)$ by replacing $x \in X$ by $x'$ or $x''$ depending on whether $z$ is among the $c_1, \dots, c_r$ or the $d_1, \dots, d_s$.
	
	Let $\vec{u}, \vec{v} \in V(F')^{k+1}$ be tuples comprised of the vertices of $\gamma(u)$ and $\gamma(v)$ such that if $u_i = x'$ for some $x \in X$ and $i \in [k+1]$ then $v_i = x''$.
	Also, if $u_i \in \beta(w) \setminus X$ for $i \in [k+1]$, then we require that $u_i = v_i$.
	Let $\boldsymbol{F}' \coloneqq (F', \vec{u}, \vec{v})$.
	Taking the trace of $\boldsymbol{F}'$ has the effect that for every $x \in X$ the copies $x'$ and $x''$ are identified. 
	Hence, $\tr \boldsymbol{F}' \cong F$, as desired.
\end{proof}

To apply \cref{thm:meta-paths-cycles}, it remains to give a set of generators for $\mathcal{PW}^k$, cf.\@ \cref{fig:tw-basal}.

\begin{figure}
	\centering
	\begin{subfigure}[b]{.24\linewidth}
		\centering
		\includegraphics[page=9,scale=.8]{figure-basal}
		\caption{identity graph $\boldsymbol{I}$.}
\end{subfigure}
	\begin{subfigure}[b]{.24\linewidth}
		\centering
		\includegraphics[page=8,scale=.8]{figure-basal}
		\caption{adjacency graph $\boldsymbol{A}^{ij}$.}
	\end{subfigure}
\begin{subfigure}[b]{.24\linewidth}
		\centering
		\includegraphics[page=10,scale=.8]{figure-basal}
		\caption{forgetting graph $\boldsymbol{J}^i$.}
	\end{subfigure}
	\begin{subfigure}[b]{.24\linewidth}
	\centering
	\includegraphics[page=15,scale=.8]{figure-basal}
	\caption{swap graph $\boldsymbol{S}^{ij}$.}
\end{subfigure}
	\caption{Bilabelled graphs in $\mathcal{B}^k$ as defined in \cref{lem:basal}.}
	\label{fig:tw-basal}
\end{figure}

\begin{lemma} \label{lem:basal} \label{ssec:basal1}
	The set $\calB^k$ consisting of the following $(k+1,k+1)$-bilabelled graphs generates $\mathcal{PW}^k$. For $1 \leq i \neq j \leq k+1$,
	\begin{itemize}
		\item the \emph{identity graph} $\boldsymbol{I} = (I, (1, \dots, k+1), (1, \dots, k+1))$ with $V(I) = [k+1]$, $E(I) = \emptyset$,
		\item the \emph{adjacency graphs} $\boldsymbol{A}^{ij} = (A^{ij}, (k+1), (k+1))$ with $V(A^{ij}) = [k+1]$ and $E(A) = \left\{ij \right\}$,
\item the \emph{forgetting graphs} $\boldsymbol{J}^i = (F^i, (1, \dots, k+1), (1, \dots, i-1, i', i+1, \dots, k+1))$ with $V(F^i) = [k+1] \cup \{i'\}$ and $E(F^i) = \emptyset$,
		\item the \emph{swap graphs} $\boldsymbol{S}^{ij} = (S^{ij}, (1, \dots, k+1), (1, \dots, i-1, j, i+1, \dots, j-1, i, j+1, \dots, k+1))$ with $V(S^{ij}) = [k+1]$ and $E(S^{ij}) = \emptyset$.
	\end{itemize}
\end{lemma}
\begin{proof}
	Clearly, $\mathcal{B}^k \subseteq \mathcal{PW}^k$.
	\Cref{it:basal1,it:basal2} of \cref{def:basal} are immediate.
	It remains to verify \cref{it:basal3}:
	To that end, let $F$ be an arbitrary graph with a path decomposition $(P, \beta)$ with vertices $u, v\in V(P)$ and $\vec{u}, \vec{v} \in V(F)^{k+1}$ as in \cref{def:pwk}.
	The proof is by induction on $|V(P)|$.
	
	If $|V(P)| = 1$ then $u = v$ and $\{{u}_1, \dots, {u}_{k+1}\} = \{{v}_1, \dots, {v}_{k+1}\}$. Hence, there exists a permutation $\sigma \colon [k+1] \to [k+1]$ such that ${u}_i = {v}_{\sigma(i)}$ for all $i \in [k+1]$. Write $\sigma = \tau_1 \cdots \tau_r$ as product of transpositions.
	Then $\boldsymbol{F} = (F, \vec{u}, \vec{v})$ is equal to
	\[
	\boldsymbol{S}^{\tau_1} \cdots \boldsymbol{S}^{\tau_r} \cdot 
	\prod_{\substack{1 \leq i \neq j \leq k+1\\ {v}_i{v}_j \in E(F)}} \boldsymbol{A}^{ij},
	\]
	a product of graphs in $\mathcal{B}^k$.
	
	If $|V(P)| \geq 2$, let $w \in V(P)$ denote the unique neighbour of $u$. Let $P' \coloneqq P - u$. 
	The subgraph $F'$ of $F$ induced by $\bigcup_{p \in V(P')} \beta(p)$ satisfies \cref{def:pwk} with the path decomposition $(P',\beta|_{V(P')})$, 
	the vertices $w, v$, the tuple $\vec{v}$ and some tuple $\vec{w} \in V(F')^{k+1}$ such that $\beta(w) = \{{w}_1,\dots, {w}_{k+1}\}$ and $w_i = u_i$ for all $i \in [k+1] \setminus \{\ell\}$ for some $\ell \in [k+1]$. 
	Let $\boldsymbol{F}' \coloneqq (F', \vec{w}, \vec{v})$.
	Then
	\[
		\boldsymbol{F} = \prod_{\substack{1 \leq i \neq j \leq k+1\\ {u}_i{u}_j \in E(F)}} \boldsymbol{A}^{ij} \cdot \boldsymbol{J}^\ell \cdot \boldsymbol{F}'.
	\]
	The claim follows inductively.
\end{proof}

In order to avoid the technicalities of working with small graphs, we record the following \cref{lem:padding}, which describes a padding trick.

\begin{lemma} \label{lem:padding}
	Let $\mathcal{F}' \subseteq \mathcal{F}$ be graph classes such that 
	$nK_1 \in \mathcal{F}'$ for some $n \geq 1$.
	Suppose that for all $F \in \mathcal{F} \setminus \mathcal{F}'$ it holds that $F + \ell K_1 \in \mathcal{F}'$ for some $\ell \geq 1$.
	Then for all graphs $G$ and $H$ it holds that $G \equiv_{\mathcal{F}} H$ if and only if $G \equiv_{\mathcal{F}'} H$.
\end{lemma}
\begin{proof}
	Since $\mathcal{F}' \subseteq \mathcal{F}$, it suffices to argue that the backward implication holds.
	Suppose that $G$ and $H$ are homomorphism indistinguishable over $\mathcal{F}'$. 
	Since $nK_1 \in \mathcal{F}'$ for some $n \geq 1$, it holds that $\lvert V(G) \rvert^n = \hom(nK_1, G) = \hom(nK_1, H) = \lvert V(H) \rvert^n$.
	Hence, $G$ and $H$ have the same number of vertices.
	Suppose wlog that this number is non-zero.
	Let $F \in \mathcal{F} \setminus \mathcal{F}'$. Then $F + \ell K_1 \in \mathcal{F}'$ for some $\ell \geq 1$. Hence, by \cite[(5.28)]{lovasz_large_2012},
	\[
		\hom(F, G) = \frac{\hom(F + \ell K_1, G)}{\hom(\ell K_1, G)} = \frac{\hom(F + \ell K_1, H)}{\hom(\ell K_1, H)} = \hom(F, H),
	\]
	which implies that $G$ and $H$ are homomorphism indistinguishable over $\mathcal{F}$.
\end{proof}

For the case of the classes of graphs of pathwidth at most~$k$ or cyclewidth at most~$k$, 
we apply \cref{lem:padding} with $\mathcal{F}$ being the respective graph class and
$\mathcal{F}_{\geq k} \coloneqq \{F \in \mathcal{F} \mid |V(F)| \geq k\}$ assuming the role of $\mathcal{F}'$.
With this choice of $\mathcal{F}'$,
the somewhat cumbersome assumptions of \cref{lem:padding} can be alleviated for graph classes which are minor-closed and closed under disjoint unions.
For example, by \cref{lem:pw-soe-tr}, it holds that two graphs $G$ and $H$ are homomorphism indistinguishable over the class of graphs of pathwidth at most~$k$ if and only if $G \equiv_{\soe(\mathcal{PW}^k)} H$.
In contrast, the class  of graphs of cyclewidth at most~$k$ is not closed under disjoint unions but satisfies the weaker assumptions of \cref{lem:padding}.
Indeed, if a graph $F$ has less at most $k+1$ vertices then it has cycle decomposition with a single bag and all graphs $F + nK_1$ for $n \in \mathbb{N}$ are also of cyclewidth at most~$k$.
Thus \cref{lem:padding,lem:pw-soe-tr} yield that two graphs $G$ and $H$ are homomorphism indistinguishable over the class of graphs of cyclewidth at most $k$ if and only if $G \equiv_{\tr(\mathcal{PW}^k)} H$.

This concludes the preparation for obtaining a system of matrix equations characterising homomorphism indistinguishability over graphs of bounded pathwidth and cyclewidth via \cref{thm:meta-paths-cycles}.
For later reference in \cref{sec:sa}, we denote the system of linear equations in \cref{it:pwk} of \cref{thm:hom-pw} by $\PW^{k+1}(G, H)$.

\begin{theorem} \label{thm:hom-pw}
	Let $k \geq 1$. For $n$-vertex simple graphs  $G$ and $H$, the following are equivalent:
	\begin{enumerate}
		\item $G$ and $H$ are homomorphism indistinguishable over graphs of pathwidth at most $k$,\label{it:pw1}
		\item $G$ and $H$ are homomorphism indistinguishable over graphs of pathwidth at most $k$ on at most $2n^{k+1}(k+2)$ vertices,\label{it:pw2}
		\item there exists a pseudo-stochastic $X \in \mathbb{Q}^{V(H)^{k+1} \times V(G)^{k+1}}$ such that $X \boldsymbol{B}_G = \boldsymbol{B}_H X$ for all $\boldsymbol{B} \in \mathcal{B}^k$.\label{it:pwk}
	\end{enumerate}
\end{theorem}
\begin{proof}
	In virtue of \cref{lem:basal},
	we apply \cref{thm:meta-paths-cycles} to the involution monoid $\mathcal{PW}^k$ with generating set $\mathcal{B}^k$.
	Write $N \coloneqq 2n^{k+1}(k+2)$. Consider the following additional assertions:
	\begin{enumerate}[start=4]
		\item $G$ and $H$ are homomorphism indistinguishable over graphs of pathwidth at most $k$ on at least~$k+1$ vertices,\label{it:pw4}
		\item $G$ and $H$ are homomorphism indistinguishable over graphs of pathwidth at most $k$ on at least~$k+1$ vertices and at most~$N$ vertices.\label{it:pw5}
	\end{enumerate}
	By \cref{thm:meta-paths-cycles,lem:pw-soe-tr}, \cref{it:pwk,it:pw4,it:pw5} are equivalent.
	By containment of the respective graph classes, \cref{it:pw1} implies \cref{it:pw2}, which implies \cref{it:pw5}.
	By \cref{lem:padding}, \cref{it:pw1,it:pw4} are equivalent. This closes a cycle of implications.
\end{proof}

Analogously, we obtain the following theorem:
\begin{theorem}
	Let $k \geq 1$. For $n$-vertex simple graphs  $G$ and $H$, the following are equivalent:
	\begin{enumerate}
		\item $G$ and $H$ are homomorphism indistinguishable over the class of graphs of cyclewidth at most $k$,
		\item $G$ and $H$ are homomorphism indistinguishable over the class of graphs of cyclewidth at most $k$ on at most $2n^{2(k+1)}(k+2)$ vertices,
		\item there exists an orthogonal $U \in \mathbb{R}^{V(H)^{k+1} \times V(G)^{k+1}}$ such that $U \boldsymbol{B}_G = \boldsymbol{B}_H U$ for all $\boldsymbol{B} \in \mathcal{B}^k$.
	\end{enumerate}
\end{theorem}

\subsection{Treewidth: Agglutinative Generation}
\label{sec:trees}

The class of graphs of treewidth at most $k$ is generated by the same generators as $\mathcal{PW}^k$, cf.\@ \cref{lem:basal}.
However, instead of only considering series composition, we require the gluing product as well.
To that end, recall the $k$-labelled version $\boldsymbol{1}$ of the $(k,k)$-bilabelled graph~$\boldsymbol{I}$ from \cref{ex:adjacency,fig:identity-graph-labelled}.

\begin{definition} \label{def:gluten-generation}
	Let $k \geq 1$.
	Let $\mathcal{S} \subseteq \mathcal{G}(k,k)$ be an involution monoid.
	The set $\mathcal{X} \subseteq \mathcal{G}(k)$ of \emph{graphs agglutinatively generated by $\mathcal{S}$} is inductively defined as follows:
	\begin{enumerate}
		\item $\boldsymbol{1} \in \mathcal{X}$,
		\item $\boldsymbol{S} \cdot \boldsymbol{X} \in \mathcal{X}$ for $\boldsymbol{S} \in \mathcal{S}$ and $\boldsymbol{X} \in \mathcal{X}$,
		\item $\boldsymbol{X} \odot \boldsymbol{X}' \in \mathcal{X}$ for $\boldsymbol{X},\boldsymbol{X}' \in \mathcal{X}$.
	\end{enumerate}
\end{definition}
For a class $\mathcal{R} \subseteq \mathcal{G}(k)$ where $k \geq 1$, write $\soe(\mathcal{R}) \coloneqq \{\soe \boldsymbol{R} \mid \boldsymbol{R} \in \mathcal{R}\}$.
For agglutinatively generated graph classes, the following general theorem follows from \cref{thm:soe-pos,prop:soe-pos-exp-size}:
\begin{theorem} \label{thm:meta-trees}
	Let $k \geq 1$.
	Let $\mathcal{S} \subseteq \mathcal{G}(k,k)$ be an involution monoid generated by $\mathcal{B} \subseteq \mathcal{S}$. 
	Let $\mathcal{X} \subseteq \mathcal{G}(k)$ be the class agglutinatively generated by $\mathcal{S}$.
	Let $G$ and $H$ be $n$-vertex graphs.
	Suppose every graph in $\mathcal{B}$ has at most $b \in \mathbb{N} \cup \{\infty\}$ vertices and let $N \coloneqq (2n^k)^{n^k +1 }b \in \mathbb{N} \cup \{\infty\}$.
	Then the following are equivalent:
	\begin{enumerate}
		\item $G$ and $H$ are homomorphism indistinguishable over $\soe(\mathcal{X})$,
		\item $G$ and $H$ are homomorphism indistinguishable over $\soe(\mathcal{X})_{\leq N}$,
		\item there exists a doubly stochastic $X \in \mathbb{Q}^{V(H)^k \times V(G)^k}$ such that $X\boldsymbol{B}_G = \boldsymbol{B}_H X$ for all $\boldsymbol{B} \in \mathcal{B}$.
	\end{enumerate}
\end{theorem}

Equipped with this general theorem, we now turn to establishing that the class of graphs of bounded treewidth is subject to it.

\begin{lemma} \label{lem:tw-soe}
	Let $k \geq 1$.
	Let $\mathcal{TW}^k \subseteq \mathcal{G}(k+1)$ denote the class which is agglutinatively generated by~$\mathcal{PW}^k$.
	Then $\soe(\mathcal{TW}^k)$ is the class of all graphs of treewidth at most $k$ on at least $k+1$ vertices.
\end{lemma}
\begin{proof}
	We first show that all graphs in $\boldsymbol{F} = (F, \boldsymbol{u}) \in \mathcal{TW}^k$ admit a tree decomposition $(T, \beta)$ of width at most $k$ such that there is $r \in V(T)$ with $\beta(r) = \{u_1, \dots, u_{k+1}\} $. We call $r$ the \emph{root} of the decomposition.
	The hypothesis clearly holds for $\boldsymbol{1}$.
	
	By structural induction, suppose that $\boldsymbol{F} = \boldsymbol{S} \cdot \boldsymbol{X}$ for $\boldsymbol{X} = (X, \boldsymbol{x}) \in \mathcal{TW}^k$ of lesser complexity and $\boldsymbol{S} = (S, \boldsymbol{u}, \boldsymbol{v}) \in \mathcal{PW}^k$.
	Let $(P, \beta)$ denote the path decomposition of $\boldsymbol{S}$ with vertices $u,v\in V(P)$ as stipulated in \cref{def:pwk}.
	Let $(T, \gamma)$ denote the tree decomposition of $\boldsymbol{X}$ with root $r$ whose existence is guaranteed by the inductive hypothesis.
	Define a tree $Q$ by taking the disjoint union of $P$ and $T$ and identifying $v$ and $r$. Define $\alpha \colon V(Q) \to 2^{V(F)}$ via
	\[
		\alpha(q) = \begin{cases}
			\beta(q), & \text{if } q \in V(P), \\
			\gamma(q), & \text{if } q \in V(T).
		\end{cases}
	\]
	Since $\beta(v) = \gamma(r)$ implicitly, the map $\alpha$ is a tree decomposition of width at most $k$ of $F$.
	By construction, all labelled vertices in $\boldsymbol{F}$ lie in the same bag. 
	
	If $\boldsymbol{F} = \boldsymbol{X} \odot \boldsymbol{X}'$, a tree decomposition for $\boldsymbol{F}$ can be constructed from the tree decompositions of $\boldsymbol{X}$ and $\boldsymbol{X}'$ by taking the disjoint union of the decomposition trees and identifying the roots.
	
	Conversely, we consider graphs of treewidth at most $k$ with tree decompositions as in \cref{lem:bodlaender8}.
	By induction on the size of decomposition tree~$T$, 
	we show that for every graph $F$ with tree decomposition $(T, \beta)$ of width at most $k$ as in \cref{lem:bodlaender8}
	and every $r \in V(T)$ and $\boldsymbol{u} \in V(F)^{k+1}$ such that $\beta(r) = \{u_1, \dots, u_{k+1}\}$, 
	it holds that $\boldsymbol{F} \coloneqq (F, \boldsymbol{u}) \in \mathcal{TW}^k$.
	
	If $\lvert V(T) \rvert = 1$ then $T$ is a path. 
	Clearly, $\boldsymbol{F}' \coloneqq (F, \boldsymbol{u}, \boldsymbol{u}) \in \mathcal{PW}^k$ and $\boldsymbol{F} = \boldsymbol{F}' \boldsymbol{1} \in \mathcal{TW}^k$, as desired.
	
	If $\lvert V(T) \rvert > 1$, distinguish two cases:
	First suppose that $r$ has only one neighbour $r'$.
	Define $T'$ as the tree obtained from $T$ by deleting $r$ and write $F'$ for the subgraph of $F$ induced by $\bigcup_{t' \in V(T')} \beta(t')$.
	Let $\beta'$ denote the restriction of $\beta$ to $V(T')$. 
	By \cref{lem:bodlaender8}, there is a unique index $i \in [k+1]$ such that $u_i \in \beta(r) \setminus \beta(r')$.
	Write $x$ for the unique vertex in $\beta(r') \setminus \beta(r)$.
	Then the inductive hypothesis applies to $F'$, $(T', \beta')$, $r'$ and $\boldsymbol{v} \coloneqq u_1 \dots u_{i-1} x u_{i+1} \dots u_{k+1} \in V(F')^{k+1}$.
	Then 
	\[
		\boldsymbol{F} = \prod_{\ell, j \in [k+1] \text{ s.t. } u_\ell u_j \in E(F)} \boldsymbol{A}^{\ell j} \cdot \boldsymbol{J}^i \cdot (F', \boldsymbol{v}) \in \mathcal{TW}^k. 
	\]
	
	Finally, suppose that $r$ has multiple neighbours $r'_1, \dots, r'_m$.
	For $i \in [m]$, write $T_i$ for the connected component of $r'_i$ in the forest obtained from $T$ by deleting $r'_1, \dots, r'_{i-1}, r'_{i+1}, \dots, r'_m$.
	Write $F'_i$ for the subgraph of $F$ induced by $\bigcup_{t' \in V(T_i)} \beta(t')$ and $\beta'_i$ for the restriction of $\beta$ to $V(T'_i)$.
	Observe that $r$ is of degree one in all graphs $T_1, \dots, T_m$. 
	By the previous case, $(F'_i, \boldsymbol{u}) \in \mathcal{TW}^k$.
	Clearly, $\boldsymbol{F} = \bigodot_{i=1}^m (F'_i, \boldsymbol{u}) \in \mathcal{TW}^k$.
\end{proof}

This concludes the preparation for the proof of the main theorem of this section.

\begin{theorem} \label{thm:trees}
	Let $k \geq 1$.
	Let $G$ and $H$ be $n$-vertex simple graphs.
	Then the following are equivalent:
	\begin{enumerate}
		\item $G$ and $H$ are homomorphism indistinguishable over the class of graphs of treewidth at most $k$,\label{it:tw1}
		\item $G$ and $H$ are homomorphism indistinguishable over the class of graphs of treewidth at most $k$ on at most $(2n^{k+1})^{n^{k+1} +1}(k+2)$ vertices,\label{it:tw2}
		\item there exists a doubly stochastic $X \in \mathbb{Q}^{V(H)^{k+1} \times V(G)^{k+1}}$ such that $X\boldsymbol{B}_G = \boldsymbol{B}_H X$ for all $\boldsymbol{B} \in \mathcal{B}^k$.\label{it:tw3}
	\end{enumerate}
\end{theorem}
\begin{proof}
	In virtue of \cref{lem:basal},
	we apply \cref{thm:meta-trees} to $\mathcal{TW}^k$, which is agglutinatively generated by the involution monoid $\mathcal{PW}^k$ with generating set $\mathcal{B}^k$.
	Write $N \coloneqq (2n^{k+1})^{n^{k+1} + 1}(k+2)$. Consider the following additional assertions:
	\begin{enumerate}[start=4]
		\item $G$ and $H$ are homomorphism indistinguishable over the class of graphs of treewidth at most $k$ on at least~$k+1$ vertices,\label{it:tw4}
		\item $G$ and $H$ are homomorphism indistinguishable over the class of graphs of treewidth at most $k$ on at least~$k+1$ vertices and at most~$N$ vertices.\label{it:tw5}
	\end{enumerate}
	By \cref{thm:meta-trees,lem:tw-soe}, \cref{it:tw3,it:tw4,it:tw5} are equivalent.
	By containment of the respective graph classes, \cref{it:tw1} implies \cref{it:tw2}, which implies \cref{it:tw5}.
	By \cref{lem:padding}, \cref{it:tw1,it:tw4} are equivalent. This closes a cycle of implications.
\end{proof}

\subsection{Treedepth: Generation by an Involution Monoid and Closure under Gluing}
\label{sec:treedepth}

In order to infer a system of equations characterising homomorphism indistinguishability over graphs of bounded treedepth, we consider another type of interaction between labelled and bilabelled graphs. Let $\mathcal{S} \subseteq \mathcal{G}(k,k)$ be an involution monoid. Write $\mathcal{S}\boldsymbol{1} \coloneqq \{\boldsymbol{S} \cdot \boldsymbol{1} \mid \boldsymbol{S} \in \mathcal{S}\} \subseteq \mathcal{G}(k)$. This section is concerned with the case when $\mathcal{S}\boldsymbol{1}$ is gluing-closed.

\begin{definition}
	Let $k \geq 1$.
	A set $\mathcal{R} \subseteq \mathcal{G}(k)$ is \emph{gluing-closed} if $\boldsymbol{R} \odot \boldsymbol{R}' \in \mathcal{R}$ for all $\boldsymbol{R}, \boldsymbol{R}' \in \mathcal{R}$.
\end{definition}

For example, the class of graphs which admit a coalgebra w.r.t.\@ a fixed comonad is gluing-closed \cite{rattan_weisfeiler_2023,dawar_lovasz_2021}.
Note that any agglutinatively generated graph class is gluing-closed.
If $\mathcal{S}\boldsymbol{1}$ is gluing-closed then pseudo-stochastic solutions exist if and only if doubly stochastic solutions exist:

\begin{theorem} \label{thm:meta-treedepth}
	Let $k\geq 1$.
	Let $\mathcal{S} \subseteq \mathcal{G}(k,k)$ be an involution monoid generated by $\mathcal{B} \subseteq \mathcal{S}$. 
	Suppose that $\mathcal{S} \boldsymbol{1}$ is gluing-closed.
	Suppose that every graph in $\mathcal{B}$ has at most $b \in \mathbb{N} \cup \{\infty\}$ vertices and let $N \coloneqq 2n^{k}b \in \mathbb{N} \cup \{\infty\}$.
	Then for graphs $G$ and $H$, the following are equivalent:
	\begin{enumerate}
		\item $G$ and $H$ are homomorphism indistinguishable over $\soe(\mathcal{S})$,\label{it:td1}
		\item $G$ and $H$ are homomorphism indistinguishable over $\soe(\mathcal{S})_{\leq N}$,\label{it:td2}
		\item there exists a pseudo-stochastic matrix $X \in \mathbb{Q}^{V(H)^k \times V(G)^k}$ such that $X \boldsymbol{B}_G = \boldsymbol{B}_H X$ for all $\boldsymbol{B} \in \mathcal{B}$,\label{it:td3}
		\item there exists a doubly stochastic matrix $X \in \mathbb{Q}^{V(H)^k \times V(G)^k}$ such that $X \boldsymbol{B}_G = \boldsymbol{B}_H X$ for all $\boldsymbol{B} \in \mathcal{B}$.\label{it:td4}
	\end{enumerate}
\end{theorem}
\begin{proof}
	By \cref{thm:meta-paths-cycles}, \cref{it:td1,it:td2,it:td3} are equivalent.
	Let $\mathcal{X} \subseteq \mathcal{G}(k)$ denote the class agglutinatively generated by~$\mathcal{S}$, cf.\@ \cref{def:gluten-generation}.
	Since $\mathcal{S} \boldsymbol{1}$ is gluing-closed, it follows inductively that $\mathcal{X} \subseteq \mathcal{S} \boldsymbol{1}$.
	Conversely, $\mathcal{S} \boldsymbol{1} \subseteq \mathcal{X}$ since taking product of bilabelled graphs in $\mathcal{S}$ is one particular operation listed in \cref{def:gluten-generation}.
	Hence, $\mathcal{S}\boldsymbol{1} = \mathcal{X}$.
	\Cref{thm:meta-trees} yields that \cref{it:td1,it:td4} are equivalent.
\end{proof}

In \cite{grohe_counting_2020}, it was shown that homomorphism indistinguishable over graphs of treedepth at most $k$ corresponds to equivalence over the quantifier-rank-$k$
fragment of first-order logic with counting quantifiers. 
We extend this characterisation by proposing a linear system of equations very similar to the one for bounded pathwidth. 

First, we recall the following notions including the definition of treedepth as introduced in \cite{nesetril_tree-depth_2006}. See also \cite{nesetril_sparsity_2012} for further background.
A \emph{forest order} on a set $X$ is a partial order~$\leq$ on $X$ such that, for every $x \in X$, the set $\{y \in X \mid y \leq x\}$ is finite and totally ordered \cite[Section~II.C]{dawar_lovasz_2021}.
Let $F$ be a graph.
An \emph{elimination forest} of $F$ is a forest order $\leq$ on $V(F)$ such that, for every edge $uv \in E(F)$, the vertices $u$ and $v$ are \emph{comparable} with respect to to $\leq$, 
i.e.\ it holds that $u \leq v$ or $v \leq u$.
The \emph{height} of $\leq$ is the size of the largest subset of $V(F)$ which is totally ordered under $\leq$.
The \emph{treedepth}  of a graph $F$ is the minimum height of an elimination forest of~$F$.

A \emph{leaf} of an elimination forest $\leq$ on $F$ is a vertex $v \in V(F)$ such that for all $w \in V(F)$ it holds that $v \leq w $ implies that $  v=w$.
Dually, a \emph{root} is a vertex $v \in V(F)$ such that for all $w \in V(F)$ it holds that $w \leq v $ implies that $  v = w$.

The following \cref{def:tdk} introduces a suitable class of labelled graphs of bounded treedepth.

\begin{definition} \label{def:tdk}
	Let $k \geq 1$.
	The set $\mathcal{TD}^k$ is the set of all $(k,k)$-bilabelled graph $\boldsymbol{F} = (F, \vec{u}, \vec{v})$ such that there exists a rooted forest $\leq$ on $F$ satisfying
	\begin{enumerate}
		\item every edge $uv \in E(F)$ is such that $u \leq v$ or $v \leq u$,\label{b1}
		\item for every leaf $x \in V(F)$ of $\leq$, the set $\{ z \in V(F) \mid z \leq x\}$ has size $k$, \label{b1a}
		\item $\vec{u}$ and $\vec{v}$ are paths from a root to a leaf,
		i.e.\@ $u_1 < u_2 < \dots < u_k$ and $v_1 < v_2 < \dots < v_k$.\label{b2}
		\item the leaves $u_{k}$ and $v_{k}$ have the least number of common ancestors among all pairs of leaves of $F$, i.e.\@ writing\label{b3}
		\begin{equation}\label{eq:gca}
			\gca(x,y) \coloneqq \abs{\{ z \in V(F) \mid z \leq x \land z \leq y\}}
		\end{equation}
		for $x, y \in V(F)$, it holds that $\gca(x, y) \geq \gca(u_{k}, v_{k})$ for all pairs of leaves $x,y \in V(F)$.
	\end{enumerate}
\end{definition}

\cref{b1,b1a} ensure that the underlying unlabelled graph $F$ is of treedepth at most~$k$.
\cref{b2} guarantees that this property is preserved under series composition.
The remaining \cref{b3} helps to establish that $\mathcal{TD}^k$ is finitely generated.

\begin{lemma} \label{lem:tdk-involution-monoid}
	Let $k \geq 1$.
	$\mathcal{TD}^k$ is an involution monoid.
\end{lemma}
\begin{proof}
	Clearly, $\mathcal{TD}^k$ is closed under taking reverses and contains $\boldsymbol{I}$.
	Given $\boldsymbol{F} = (F, \vec{u}, \vec{v})$ and $\boldsymbol{F}' = (F', \vec{u}', \vec{v}')$ with rooted forest $\leq$ and $\leq'$, define a rooted forest $\leq''$ on $\boldsymbol{F} \cdot \boldsymbol{F}'$ by
	letting $x \leq'' y$ if $x, y\in V(F)$ and $x \leq y$ or if $x, y \in V(F')$ and $x \leq' y$. Since $\vec{v}$ and $\vec{u}'$ form paths from roots to leaves, this is a well-defined rooted forest.
	Also, in $\leq''$, $\vec{u}$ and $\vec{v}'$ are paths from roots to leaves. 
	
	It remains to check that \cref{b3} is satisfied.
	First observe that $\gca(u_{k}, v_{k}), \gca(u'_{k}, v'_{k}) \geq \gca(u_{k}, v'_{k})$. Indeed, any common ancestor of $u_{k}$ and $v'_{k}$ must be an ancestor of $v_{k}$, which is identified with $u'_{k}$. Let $z$ denote the maximal vertex such that $z \leq'' u_{k}, v'_{k}$. 
	Note that $z \leq'' v_k, u'_k$.
	Any leaf $x$ of $\boldsymbol{F}$ satisfies $z \leq x$. Indeed, if $z$ and $x$ were incomparable w.r.t.\@ $\leq$ then $\gca(x, u_{k}) < \gca(u_{k}, v'_{k})$ which contradicts the previous observation since $\gca(x, u_{k}) \geq \gca(u_{k}, v_{k})$. 
	The same argument applies to leaves of $\boldsymbol{F}'$. 
	It follows that $\gca(x, y) \geq \gca(u_{k}, v'_{k})$ for every pair of leaves $x,y$ in $\boldsymbol{F} \cdot \boldsymbol{F}'$.
\end{proof}

The following \cref{lem:tdk-gluing-closed} establishes another assumption of \cref{thm:meta-treedepth}:

\begin{lemma} \label{lem:tdk-gluing-closed}
	Let $k \geq 1$.
	$\mathcal{TD}^k\boldsymbol{1}$ is gluing-closed.
\end{lemma}
\begin{proof}
	Given $\boldsymbol{F} = (F, \vec{u})$ and $\boldsymbol{F}' = (F', \vec{u}')$ with rooted forest $\leq$ and $\leq'$, define a rooted forest $\leq''$ on $\boldsymbol{F} \odot \boldsymbol{F}'$ by
	letting $x \leq'' y$ if $x, y\in V(F)$ and $x \leq y$ or $x, y \in V(F')$ and $x \leq' y$.
	Since $\vec{u}$ and $\vec{u}'$ form paths from roots to leaves, this is a well-defined rooted forest.
	The other conditions in \cref{def:tdk} are easily verified.
\end{proof}

It remains to define generators for $\mathcal{TD}^k$. The graphs featured in \cref{lem:tdk-generates} are depicted by \cref{fig:td-basal}.
\begin{figure}
	\centering
	\begin{subfigure}[b]{.3\linewidth}
		\centering
		\includegraphics[page=1,scale=.9]{figure-basal}
		\caption{identity graph $\boldsymbol{I}$}
		\label{fig:identitygraph}
	\end{subfigure}
	\begin{subfigure}[b]{.3\linewidth}
		\centering
		\includegraphics[page=2,scale=.9]{figure-basal}
		\caption{adjacency graph $\boldsymbol{A}^{ij}$}
	\end{subfigure}
	\begin{subfigure}[b]{.3\linewidth}
		\centering
		\includegraphics[page=4,scale=.9]{figure-basal}
		\caption{join graph $\boldsymbol{J}^\ell$}
	\end{subfigure}
\caption{The graphs in $\mathcal{TDB}^k$ as defined in \cref{lem:tdk-generates}.}
	\label{fig:td-basal}
\end{figure}

\begin{lemma} \label{lem:tdk-generates}
	Let $k \geq 1$. Define the set $\mathcal{TDB}^k$ as the set of the following $(k,k)$-bilabelled graphs:
	\begin{itemize}
		\item the \emph{identity graph} $\boldsymbol{I} = (I, (k), (k))$ with $V(I) = [k]$ and $E(I) = \emptyset$,
		\item the \emph{adjacency graphs} $\boldsymbol{A}^{ij} = (A^{ij}, (k), (k))$ with $V(A^{ij}) = [k]$ and $E(A^{ij}) = \{ij\}$ for $1 \leq i < j \leq k$,
		\item the \emph{join graphs} $\boldsymbol{J}^\ell = (J^\ell, (k), (1, \dots, \ell, (\ell+1)', \dots, k'))$ with  $V(J^\ell) = \{1, \dots, k, (\ell+1)', \dots, k'\}$ and $E(J^\ell) = \emptyset$ for $0 \leq \ell < k$,
\end{itemize}
	Then $\mathcal{TDB}^k$ generates $\mathcal{TD}^k$.
\end{lemma}
\begin{proof}
	Clearly, $\mathcal{TDB}^k$ is closed under taking reverses and contains $\boldsymbol{I}$.
	For the second assertions, let $\boldsymbol{F} = (F, \vec{u}, \vec{v}) \in \mathcal{TD}^k$. Let $\leq$ denote a rooted forest for $F$.
	The proof is by induction on the number of leaves in $\leq$.
	
	If there is only one leaf then $\boldsymbol{F}$ is the product of the $\boldsymbol{A}^{ij}$ such that $1 \leq i \neq j \leq k$ and $ u_iu_j \in E(F)$.
	
	Now suppose that there are at least two leaves in $\leq$.
	Write $X$ for the set of all leaves $x$ of $\leq$ other than $u_{k}$ such that $\gca(u_{k}, x)$ is maximal, i.e.\@ such that $\gca(u_{k}, x) \geq \gca(u_{k}, y)$ for all leaves $y \neq u_{k}$. Let $x \in X$ be such that $\gca(x, v_{k})$ is minimal, i.e.\@ $\gca(x, v_{k}) \leq \gca(y, v_{k})$ for all $y \in X$.
	Write $D$ for the set of vertices $z \in V(F)$ such that $z \leq u_{k}$ and $z$ and $x$ are incomparable. Note that $D$ forms a chain in $\leq$.

	Let $F'$ be the graph obtained from $F$ by deleting all vertices in $D$.
	The rooted forest $\leq$ restricts to a rooted forest $\leq'$ of $F'$.
	Let $F''$ be the subgraph of $F$ induced by the vertices $z \leq u_k$. Let $\vec{w} \in V(F')^k$ be the tuple satisfying $w_1 < w_2 < \dots < w_k = x$.
	
	For \cref{b3}, distinguish cases: If $\gca(u_{k}, x) > \gca(u_{k}, v_{k})$ then there is a vertex $y$ such that $y$ is a common ancestor of $u_{k}$ and $x$ but $y$ is not an ancestor of $v_{k}$. This implies that every common ancestor of $v_{k}$ and $x$ is comparable with $y$ and hence $\gca(x, v_{k}) = \gca(u_{k}, v_{k})$.
	Hence, $\gca(a, b) \geq \gca(u_{k}, v_{k}) = \gca(x, v_{k})$ for any pair of leaves $a,b$ in $F'$.
	However, if $\gca(u_{k}, x) = \gca(u_{k}, v_{k})$ then all leaves $a$ of $F'$ are in fact in $X$ and the claim follows readily.
	
	The bilabelled graphs $\boldsymbol{F}' \coloneqq (F', \vec{w}, \vec{v})$ and $\boldsymbol{F}'' \coloneqq (F'', \vec{u}, \vec{u})$ are in $\mathcal{TD}^k$ and have less leaves than $\boldsymbol{F}$. The claim follows inductively, observing that $\boldsymbol{F} = \boldsymbol{F}'' \cdot \boldsymbol{J}^\ell \cdot \boldsymbol{F}'$ for $0\leq \ell < k$ minimal such that $w_{\ell+1} \not\in \{u_1, \dots, u_k\}$.
\end{proof}

The above observations yield the following \cref{thm:treedepth-first}, which implies one of the equivalences of \cref{thm:treedepth}:

\begin{theorem} \label{thm:treedepth-first}
	Let $k \geq 1$.
	Let $G$ and $H$ be $n$-vertex simple graphs.
	Then the following are equivalent:
	\begin{enumerate}
		\item $G$ and $H$ are homomorphism indistinguishable over the class of graphs of treedepth at most $k$,\label{it:tdf1}
		\item $G$ and $H$ are homomorphism indistinguishable over the class of graphs of treedepth at most $k$ on at most $4kn^{k}$ vertices,\label{it:tdf2}
		\item there exists a doubly stochastic $X \in \mathbb{Q}^{V(H)^k \times V(G)^k}$ such that $X\boldsymbol{B}_G = \boldsymbol{B}_H X$ for all $\boldsymbol{B} \in \mathcal{TDB}^k$,\label{it:tdf3}
		\item there exists a pseudo-stochastic $X \in \mathbb{Q}^{V(H)^k \times V(G)^k}$ such that $X\boldsymbol{B}_G = \boldsymbol{B}_H X$ for all $\boldsymbol{B} \in \mathcal{TDB}^k$.\label{it:tdf4}
	\end{enumerate}
\end{theorem}
\begin{proof}
	In virtue of \cref{lem:tdk-generates,lem:tdk-gluing-closed,lem:tdk-involution-monoid},
	we apply \cref{thm:meta-treedepth} to the gluing-closed $\mathcal{TD}^k\boldsymbol{1}$ and the generating set $\mathcal{TDB}^k$.
	Write $N \coloneqq 4kn^{k}$. Consider the following additional assertions:
	\begin{enumerate}[start=5]
		\item $G$ and $H$ are homomorphism indistinguishable over the class of graphs which admit a rooted forest satisfying \cref{b1,b1a} of \cref{def:tdk},\label{it:tdf5}
		\item $G$ and $H$ are homomorphism indistinguishable over the class of graphs on at most~$N$ vertices which admit a rooted forest satisfying \cref{b1,b1a} of \cref{def:tdk}.\label{it:tdf6}
	\end{enumerate}
	Observe that for every graph $F$ admitting a rooted forest satisfying \cref{b1,b1a} of \cref{def:tdk} one can pick $\boldsymbol{u}, \boldsymbol{v} \in V(F)^k$ such that \cref{b2,b3} of \cref{def:tdk} are satisfied as well.
	Hence, by \cref{thm:meta-treedepth}, \cref{it:tdf3,it:tdf4,it:tdf5,it:tdf6} are equivalent.
	By containment of the respective graph classes, \cref{it:tdf1} implies \cref{it:tdf2}, which implies \cref{it:tdf6}.
	
	By adding isolated vertices, any graph of treedepth at most $k$ can be turned into a graph with rooted forest satisfying \cref{b1,b1a} of \cref{def:tdk}.
	Thus, by \cref{lem:padding}, \cref{it:tdf1,it:tdf5} are equivalent. This closes a cycle of implications.
\end{proof}

\subsection{Bounded Degree Trees: Inner-Product Compatibility}
\label{sec:bounded-degree-trees}

In this final subsection, we turn to a class of labelled graphs which is endowed with less algebraic structure than the graph classes considered above.
The class of suitably labelled trees of bounded degree trees is inner-product compatible.
This is the most general property which ensures amenability to our approach.

\begin{definition}
	Let $k \geq 1$.
	A set $\mathcal{R} \subseteq \mathcal{G}(k)$ is \emph{inner-product compatible} if for all $\boldsymbol{R}, \boldsymbol{R}' \in \mathcal{R}$
	there exists $\boldsymbol{R}'' \in \mathcal{R}$ such that $\left< \boldsymbol{R}, \boldsymbol{R}' \right> = \soe(\boldsymbol{R}'')$.
\end{definition}

Since $\left< \boldsymbol{R}, \boldsymbol{R}' \right> = \soe(\boldsymbol{R} \odot \boldsymbol{R}')$ for all $\boldsymbol{R}, \boldsymbol{R}' \in \mathcal{G}(k)$, all gluing-closed families are also inner-product compatible. Similarly, if $\mathcal{R} = \mathcal{S}\boldsymbol{1}$ for some involution monoid $\mathcal{S}$ then $\mathcal{R}$ is inner-product compatible.
This holds since $\left< \boldsymbol{S}_1 \boldsymbol{1}, \boldsymbol{S}_2 \boldsymbol{1} \right> = \left<  \boldsymbol{1}, \boldsymbol{S}_1^*\boldsymbol{S}_2 \boldsymbol{1} \right> = \soe(\boldsymbol{S}_1^*\boldsymbol{S}_2 \boldsymbol{1})$ for $\boldsymbol{S}_1, \boldsymbol{S}_2 \in \mathcal{S}$.
\begin{theorem} \label{thm:meta}
	Let $k \geq 1$.
	Let $\mathcal{R} \subseteq \mathcal{G}(k)$ be inner-product compatible containing $\boldsymbol{1}$. 
	Then for graphs $G$ and $H$, the following are equivalent:
	\begin{enumerate}
		\item $G$ and $H$ are homomorphism indistinguishable over $\soe(\mathcal{R})$,
		\item there exists a pseudo-stochastic $X \in \mathbb{Q}^{V(H)^k \times V(G)^k}$ such that $X \boldsymbol{R}_G =\boldsymbol{R}_H$ for all $\boldsymbol{R} \in \mathcal{R}$.
	\end{enumerate}
\end{theorem}
\begin{proof}
	The backward direction is immediate. For the forward direction, consider the spaces $\mathbb{Q}\mathcal{R}_G$ and $\mathbb{Q}\mathcal{R}_H$ spanned by the $\boldsymbol{R}_G$ and $\boldsymbol{R}_H$ respectively for $\boldsymbol{R} \in \mathcal{R}$. By inner-product compatibility of~$\mathcal{R}$, for any $\boldsymbol{R}, \boldsymbol{R}' \in \mathcal{R}$ there exists $\boldsymbol{R}'' \in \mathcal{R}$ such that $\left< \boldsymbol{R}_G, \boldsymbol{R}'_G \right> = \soe(\boldsymbol{R}''_G) = \soe(\boldsymbol{R}''_H) = \left< \boldsymbol{R}_H, \boldsymbol{R}'_H \right>$. 
	By \cref{lemma:gs}, there exists an orthogonal map $U \colon \mathbb{Q}\mathcal{R}_G \to \mathbb{Q}\mathcal{R}_H$ such that $U\boldsymbol{R}_G = \boldsymbol{R}_H$ for all $\boldsymbol{R} \in \mathcal{R}$. In particular, $U\boldsymbol{1}_G = \boldsymbol{1}_H$ and $U^T \boldsymbol{1}_H = \boldsymbol{1}_G$. Hence, $U$ is pseudo-stochastic. 
	
	Define $X \colon \mathbb{Q}^{V(G)^k} \to \mathbb{Q}^{V(H)^k}$ as the map coinciding with $U$ on $\mathbb{Q}\mathcal{R}_G$ and annihilating $(\mathbb{Q}\mathcal{R}_G)^\bot$.
	Then $X\boldsymbol{R}_G = U \boldsymbol{R}_G = \boldsymbol{R}_H$ for all $\boldsymbol{R} \in \mathcal{R}$. Furthermore, for all $v \in \mathbb{Q}\mathcal{R}_G$,
	$\left< v, X^T \boldsymbol{1}_H \right> = \left< Uv,  \boldsymbol{1}_H \right> = \left< v,  U^T\boldsymbol{1}_H \right> = \left< v,  \boldsymbol{1}_G \right>$.
	For all $v \in (\mathbb{Q}\mathcal{R}_G)^\bot$, $\left< v, X^T \boldsymbol{1}_H \right> = 0 = \left< v,  \boldsymbol{1}_G \right>$.
	Hence, $X^T \boldsymbol{1}_H = \boldsymbol{1}_G$.
\end{proof}

As noted above, all graph classes considered in previous sections enjoy properties stronger than inner-product compatibility.
The class of bounded degree trees however does not satisfy any of the aforementioned properties.

\begin{example}
	A \emph{$d$-ary tree} is a tree whose vertices have degree at most $d+1$.
	For $d \geq 1$, the family of $1$-labelled $d$-ary trees $\mathcal{T}^d$ with label at a vertex of degree at most one is inner-product compatible. 
\end{example}

The set $\mathcal{T}^d$ is closed under \emph{guarded Schur products}, i.e.\@ the $d$-ary operation~$\circledast^d$ defined as
$\circledast^d(\boldsymbol{R}^1,\dots,\boldsymbol{R}^d) \coloneqq \boldsymbol{A} \cdot (\boldsymbol{R}^1 \odot \cdots \odot \boldsymbol{R}^d)$ for $\boldsymbol{R}^1, \dots, \boldsymbol{R}^d \in \mathcal{T}^d$. This operation induces a $d$-ary multilinear map on $\mathbb{Q}^{V(G)}$ for every graph $G$, i.e.\@ $\circledast^d_G(u_1,\dots,u_d) \coloneqq \boldsymbol{A}_G ( u_1 \odot \cdots \odot u_d)$ for $u_1, \dots, u_d \in \mathbb{Q}^{V(G)}$.

\begin{theorem} \label{cor:dary}
	Let $d \geq 1$. For simple graphs $G$ and $H$, the following are equivalent:
	\begin{enumerate}
		\item $G$ and $H$ are homomorphism indistinguishable over the class of $d$-ary trees,\label{bt1}
		\item there exists a pseudo-stochastic matrix $X \in \mathbb{Q}^{V(H) \times V(G)}$ such that $X \boldsymbol{T}_G = \boldsymbol{T}_H$ for all $\boldsymbol{T} \in \mathcal{T}^d$,\label{bt2}
		\item there exists a pseudo-stochastic matrix $X \in \mathbb{Q}^{V(H) \times V(G)}$ such that $X$ preserves $\circledast^d$ on $\mathbb{Q}\mathcal{T}^d_G$, i.e.\@ $X(\circledast^d_G(u_1,\dots,u_d)) = \circledast^d_H (Xu_1, \dots, Xu_d)$ for all $u_1,\dots,u_d \in \mathbb{Q}\mathcal{T}^d_G$.\label{bt3}
	\end{enumerate}
\end{theorem}
\begin{proof}That \cref{bt1,bt2} are equivalent follows directly from \cref{thm:meta}. 
	
	Assuming \cref{bt2}, let $X \in \mathbb{Q}^{V(H) \times V(G)}$ be pseudo-stochastic such that $X \boldsymbol{T}_G = \boldsymbol{T}_H$ for all $\boldsymbol{T} \in \mathcal{T}^d$.
	Then for all $\boldsymbol{T}^1, \dots, \boldsymbol{T}^d \in \mathcal{T}^d$, $X (\circledast^d_G(\boldsymbol{T}^1_G, \dots, \boldsymbol{T}^d_G)) = X (\circledast^d(\boldsymbol{T}^1, \dots, \boldsymbol{T}^d))_G = (\circledast^d(\boldsymbol{T}^1, \dots, \boldsymbol{T}^d))_H = \circledast^d_H(X\boldsymbol{T}^1_G, \dots, X\boldsymbol{T}^d_G)$.
	
	Finally, \cref{bt1} follows inductively from \cref{bt3} observing that every $\boldsymbol{1} \neq \boldsymbol{T} \in \mathcal{T}^d$ can be written as $\boldsymbol{T} = \circledast^d(\boldsymbol{S}^1, \dots, \boldsymbol{S}^d)$ for some $ \boldsymbol{S}^1, \cdots, \boldsymbol{S}^d \in \mathcal{T}^d$ of lower depth.
\end{proof}

\section{Comparison to Known Systems of Equations}

\label{sec:lkiso}

In this section, the systems of equations obtained in \cref{thm:hom-pw,thm:trees,thm:treedepth-first} are compared to system of equations that have been previously studied.
This yields proofs of \cref{thm:pw-informal,thm:treedepth} and thereby a proof a conjecture from \cite{dell_lovasz_2018}.

We first recall Tinhofer's \textsmaller{LP} for graph isomorphism and its relaxations.
For graphs $G$ and $H$,
the standard \textsmaller{LP} relaxation for the graph isomorphism problem~\cite{tinhofer_graph_1986}, denoted by $\Fiso(G,H)$, is defined as follows. 
Its variables are $X_{wv}$ for every $v \in V(G)$ and $w \in V(H)$. 

\begin{center}
	\begin{fminipage}[.95\linewidth]
		$\Fiso(G,H)$
		\begin{align*}
			\sum_{v \in V(G)} X_{wv} &= 1 && \parbox[t]{.3\linewidth}{for all $w \in V(H)$,} \\
			\sum_{w \in V(H)} X_{wv} &= 1 && \parbox[t]{.3\linewidth}{for all $v \in V(G)$,} \\
			\sum_{v' \in V(G)} X_{wv'} \boldsymbol{A}_G(v',v) - \sum_{w' \in V(H)} \boldsymbol{A}_H(w,w') X_{w'v}&=0 && \parbox[t]{.3\linewidth}{for all $v \in V(G)$ \\ and $w \in V(H)$.}
		\end{align*}
	\end{fminipage}
\end{center}
The last equation can be concisely stated as a matrix equation $X \boldsymbol{A}_G = \boldsymbol{A}_H X$, where $\boldsymbol{A}_G$ and $\boldsymbol{A}_H$ are the adjacency matrices of $G$ and $H$ respectively. 

The existence of a solution of $\Fiso(G,H)$ over $\{0,1\}$  is equivalent to $G$ and $H$ being isomorphic. 
The existence of a non-negative rational solution in turn is equivalent to indistinguishability under the one-dimensional Weisfeiler--Leman algorithm~\cite{tinhofer_graph_1986}.

The system of equations $\Liso[k+1](G,H)$ is described next. Instead of variables $X_{wv}$ for vertices $v\in V(G)$, $w\in V(H)$, as in the system $\Fiso{}(G,H)$, the new system has variables $X_{\pi}$ for $\pi\subseteq V(H) \times V(G)$ of size $\abs{\pi} \le k$. 
We call $\pi=\{(w_1,v_1),\ldots,(w_\ell,v_\ell)\} \subseteq V(H)\times V(G)$ a
\emph{partial bijection} if $v_i=v_j\iff w_i=w_j$ for all $i,j$, and 
we call it a \emph{partial isomorphism} if in addition $v_iv_j\in
E(G)\iff w_iw_j\in E(H)$. Now consider the following
system of linear equations.

\begin{center}
	\begin{fminipage}[.95\linewidth]
		$\Liso[k+1](G,H)$
		\begin{align}
			\sum_{v\in V(G)} X_{\pi\cup\{(w, v)\}}&= X_\pi &&\parbox[t]{7.5cm}{for
				all $\pi\subseteq
				V(H)\times V(G)$ of size
				$\abs{\pi} \le k$ and
				all $w\in V(H)$,}\tag{L1}\label{sa1}\\
			\sum_{w\in V(H)} X_{\pi\cup\{(w, v)\}}&=X_\pi &&\parbox[t]{7.5cm}{for
				all $\pi\subseteq
				V(H)\times V(G)$ of size
				$\abs{\pi}\le k$ and
				all $v\in V(G)$,} \tag{L2}\label{sa2}\\
			X_\emptyset&=1 \tag{L3} \label{sa3}\\
			X_\pi&=0 &&\parbox[t]{7.5cm}{for all $\pi\subseteq
				V(H)\times V(G)$ of size
				$\abs{\pi}\le k+1$ such that
				$\pi$ is not a partial isomorphism.}\tag{L4}\label{sa4}
		\end{align}
	\end{fminipage}
\end{center}

Borrowing the terminology from \cite{grohe_pebble_2015},  \cref{sa1} and \cref{sa2} are referred to as \emph{continuity equations} while \cref{sa3} and \cref{sa4} are referred to as \emph{compatibility equations}. 

By \cite{malkin_sheraliadams_2014}, for $k \geq 1$,
$\Liso[k+1](G,H)$ has a non-negative rational solution if and only if
$G$ and $H$ are not distinguished by $k$\nobreakdash-dimensional Weisfeiler--Leman algorithm.
In particular, $\Fiso{}(G, H)$ has a non-negative rational solution if and only if $\Liso[2](G,H)$ has a non-negative rational solution.
By
\cite{dvorak_recognizing_2010,dell_lovasz_2018}, the existence of a non-negative rational solution to $\Liso[k+1](G,H)$ is thus equivalent to homomorphism indistinguishability over the class of graphs of treewidth at most~$k$. 

The system $\Liso[k+1](G,H)$ is closely related to the Sherali--Adams relaxations of
Tinhofer's~\textsmaller{LP} $\Fiso{}(G,H)$: every solution for the level-$(k+1)$ Sherali--Adams relaxation of $\Fiso(G,H)$ yields a solution to $\Liso[k+1](G,H)$, and every solution to $\Liso[k+1](G,H)$ yields a solution to the level-$k$ Sherali--Adams relaxation of $\Fiso(G,H)$ \cite{atserias_sherali-adams_2013,grohe_pebble_2015}.

\subsection{Sherali--Adams without Non-Negativity Constraints}

\label{sec:sa}

Dropping non-negativity constraints in $\Fiso(G,H)$ yields a system of linear equations whose feasibility characterises homomorphism indistinguishability over the class of paths~\cite{dell_lovasz_2018}. It was conjectured ibidem that dropping non-negativity constraints in $\Liso[k+1](G,H)$ analogously characterises homomorphism indistinguishability over graphs of pathwidth at most $k$.
One of the conjectured implications was already shown in \cite{dell_lovasz_2018}: the existence of a rational solution to $\Liso[k+1](G,H)$ implies homomorphism indistinguishability over graphs of pathwidth at most $k$. 

We resolve the aforementioned conjecture by showing that the system of equations $\Liso[k+1](G,H)$ is feasible if and only if the system of equations $\PW^{k+1}(G,H)$ stated in \cref{thm:hom-pw} is feasible.
The proof repeatedly makes use of the observation that the equations in $\Liso[k+1](G,H)$ can be viewed as equations in $\PW^{k+1}(G, H)$ where certain $(k+1,k+1)$-bilabelled graphs model the continuity and compatibility equations of $\Liso[k+1](G,H)$. 
Building on \cref{thm:hom-pw}, we obtain the following \cref{thm:sa} implying \cref{thm:pw-informal}.

\begin{theorem} \label{thm:sa}
	For $k \geq 1$ and simple graphs $G$ and $H$, the following are equivalent:
	\begin{enumerate}
		\item $G$ and $H$ are homomorphism indistinguishable over the class of graphs of pathwidth at most~$k$,\label{it:sa1}
		\item the system of equations $\mathsf{PW}^{k+1}(G, H)$ has a rational solution,\label{it:sa3}
		\item the system of equations $\Liso[k+1](G,H)$ has a rational solution.\label{it:sa2}
	\end{enumerate}
\end{theorem}
\begin{proof}
	Given \cref{thm:hom-pw}, it suffices to show that \cref{it:sa2,it:sa3} are equivalent.
	First consider the forward implication.
	Since $X$ already denotes the solution to the system $\PW^{k+1}(G,H)$ in \cref{thm:hom-pw}, the variables of $\Liso[k+1](G,H)$ will be denoted by $Y_\pi$ instead of $X_\pi$.
	Let $\mathfrak{S}_k$ denote the symmetric group acting on $k$ letters. For a vector $\vec{v} \in V(G)^k$ and $\sigma \in \mathfrak{S}_k$, write $\sigma(\vec{v})$ for the vector $v_{\sigma(1)} \dots v_{\sigma(k)}$.
	\begin{claim} \label{lem:symmetrisation}
		Let $k \geq 2$.
		Let $X$ denote a solution to $\PW^{k+1}(G,H)$. 
		Then $X(\sigma(\boldsymbol{w}), \sigma(\boldsymbol{v})) = X(\boldsymbol{w}, \boldsymbol{v})$ for all $\boldsymbol{v} \in V(G)^{k+1}$, $\boldsymbol{w} \in V(H)^{k+1}$, and $\sigma \in \mathfrak{S}_{k+1}$.
	\end{claim}
	\begin{claimproof}
		Recall the swap graph $\boldsymbol{S}^{ij}$ from \cref{lem:basal}.
		Let $\tau \in \mathfrak{S}_{k+1}$ denote the transposition~$(ij)$.
		The equation $\boldsymbol{S}^{ij}_HX = X\boldsymbol{S}^{ij}_H$ of $\PW^{k+1}(G, H)$ is equivalent to $X(\tau(\boldsymbol{w}), \boldsymbol{v}) = X(\boldsymbol{w}, \tau(\boldsymbol{v}))$ for all $\boldsymbol{v} \in V(G)^{k+1}$ and $\boldsymbol{w} \in V(H)^{k+1}$.
		Hence, writing $\sigma \in \mathfrak{S}_{k+1}$ as product of transpositions $\sigma = \tau_1 \cdots \tau_r$, it holds that
		$X(\sigma(\boldsymbol{w}), \boldsymbol{v}) = X(\tau_1 \cdots \tau_r(\boldsymbol{w}), \boldsymbol{v}) = X(\tau_2 \cdots \tau_r(\boldsymbol{w}), \tau_1(\boldsymbol{v})) = \dots = X(\boldsymbol{w}, \tau_r \cdots \tau_1(\boldsymbol{v})) = X(\boldsymbol{w}, \sigma^{-1}(\boldsymbol{v}))$, as desired.
	\end{claimproof}
	
	The following \cref{lem:induction} shows that \cref{sa1,sa2} hold.
	
	\begin{claim} \label{lem:induction}
		Let $\ell \geq 2$. Let $X$ denote a solution to $\PW^{\ell}(G,H)$. 
		Then
		\begin{equation}  \label{eq:continuity}
			\sum_{v' \in V(G)} X(\vec{w}w, \vec{v}v') =  \sum_{w' \in V(H)} X(\vec{w}w', \vec{v}v) \eqqcolon \breve{X}(\vec{w},\vec{v})
		\end{equation}
		for all $\boldsymbol{v} \in V(G)^\ell$, $\boldsymbol{w} \in V(H)^\ell$, $v \in V(G)$, and $w \in V(H)$.
		Furthermore, the matrix $\breve{X}$ defined above is a solution to $\PW^{\ell-1}(G, H)$.
	\end{claim}
	\begin{claimproof}
		The first equality in \cref{eq:continuity} is equivalent to $\boldsymbol{J}^{\ell}_H X = X \boldsymbol{J}^\ell_G$ where $\boldsymbol{J}^\ell$ denotes the forgetting graph from \cref{lem:basal}. 
		Clearly, $\breve{X}$ is pseudo-stochastic.
		It remains to argue that $\breve{X}$ is a solution to $\PW^{\ell-1}(G, H)$.
		
		Let $\boldsymbol{J}$ denote the $(1,1)$-bilabelled edgeless $2$-vertex graph whose labels reside on distinct vertices. For every $\boldsymbol{B} \in \mathcal{B}^{\ell-2}$, the $(\ell,\ell)$-bilabelled graph $\boldsymbol{B} \otimes \boldsymbol{J}$ can be written as series product of graphs in $\mathcal{B}^{\ell-1}$.
		Then for arbitrary $w \in V(H)$ and $v \in V(G)$
		\begin{align*}
			(\breve{X} \boldsymbol{B}_G)(\vec{w},\vec{v}) 
			= (X (\boldsymbol{B}_G \otimes \boldsymbol{J}_G))(\vec{w}w,\vec{v}v)
			= ((\boldsymbol{B}_H \otimes \boldsymbol{J}_H) X)(\vec{w}w,\vec{v}v)
			= (\boldsymbol{B}_H \breve{X})(\vec{w}, \vec{v}).
		\end{align*}
		This concludes the proof.
	\end{claimproof}
	
	It remains to consider \cref{sa3}:
	
	\begin{claim} \label{lem:iso}
		Let $\ell \geq 2$. Let $X$ be a solution to $\PW^\ell(G, H)$. Let $\pi = (\vec{w},\vec{v}) \subseteq V(H) \times V(G)$ be arbitrarily ordered of length $\ell$. Suppose that $\pi$ is not a partial isomorphism. Then $X(\vec{w},\vec{v}) = 0$.
	\end{claim}
	\begin{claimproof}
		If $\pi$ is not a partial isomorphism then there exists $i\neq j \in [\ell]$ such that wlog $\{v_i, v_j \} \in E(G)$ but $\{w_i, w_j \} \not\in E(H)$. This implies that $\mathbf{A}^{ij}_G(\vec{v},\vec{v}) = 1$ while $\mathbf{A}^{ij}_H(\vec{w},\vec{w}) = 0$. Moreover, $X(\vec{w},\vec{v})\mathbf{A}^{ij}_G(\vec{v},\vec{v}) =  (X\mathbf{A}^{ij}_G)(\vec{w},\vec{v}) = (\mathbf{A}^{ij}_HX)(\vec{w},\vec{v}) = \mathbf{A}^{ij}_H(\vec{w},\vec{w})X(\vec{w},\vec{v})$ since in- and out-labels coincide. Hence, $X(\vec{w},\vec{v}) = 0$.
	\end{claimproof}
	
	This concludes the preparations for proving that \cref{it:sa3} implies \cref{it:sa2}.
	Let $X$ denote a solution of $\PW^{k+1}(G, H)$.
	Construct via \cref{lem:induction} solutions $X^\ell$ of $\PW^\ell(G, H)$ for $1 \leq \ell \leq k+1$ satisfying \cref{eq:continuity}. 
	For $\pi = (\vec{w}, \vec{v}) \subseteq V(H) \times V(G)$ define $Y_\pi = X^{\abs{\pi}}(\vec{w}, \vec{v})$. 
	By \cref{lem:symmetrisation}, the entries of $X$ do not depend on the ordering of the indices. Thus, $Y$ is well-defined. 
	Let finally $Y_\emptyset = 1$. Then \cref{sa1,sa2} hold by \cref{eq:continuity}. 
	\Cref{sa4} holds by \cref{lem:iso}, and \cref{sa3} by definition. Thus, \cref{it:sa2} holds.
	
	That \cref{it:sa2} implies \cref{it:sa1} was shown by \cite{dell_lovasz_2018}.
	For the sake of completeness, we prove that \cref{it:sa2} implies \cref{it:sa3}.
	To that end, let $Y$ denote a solution of $\Liso[k+1](G,H)$.
	Define a candidate solution $X$ for $\PW^{k+1}(G, H)$ by letting $X(\vec{w}, \vec{v}) \coloneqq Y_{(\vec{w}, \vec{v})}$. By repeatedly applying \cref{sa1,sa2} it follows that $X$ is pseudo-stochastic. For commutation with the graphs from \cref{lem:basal}, four cases have to be considered: Let $\vec{v} \in V(G)^{k+1}$, $\vec{w} \in V(H)^{k+1}$. Recall \cref{sa3}.
	\begin{enumerate}
		\item Let $\boldsymbol{A}^{ij}$ be an adjacency graph. If $(\vec{w}, \vec{v})$ is a partial isomorphism then $\boldsymbol{A}^{ij}_G(\vec{v},\vec{v}) = \boldsymbol{A}^{ij}_H(\vec{w}, \vec{w})$ and thus $X$ and $\boldsymbol{A}^{ij}$ commute. If $(\vec{w}, \vec{v})$ is not a partial isomorphism then $X(\vec{w} , \vec{v}) = 0$ and the tensors commute as well.
		\item Let $\boldsymbol{J}^\ell$ be a forgetting graph. To ease notation, suppose $\ell = k+1$. Then by \cref{sa1,sa2}, writing $\vec{v} = v_1\dots v_{k+1}$, and $\vec{w} = w_1 \dots w_k$,
		\begin{align*}
			(X\boldsymbol{J}^{k+1}_G)(\vec{w},\vec{v})
			&= \sum_{x \in V(G)} X(\vec{w}, v_1 \dots v_k x)  
			= \sum_{x \in V(G)} Y_{\{(w_1, v_1), \dots, (w_k, v_k), (w_{k+1}, x)\}} \\
			&\stackrel{\eqref{sa1}}{=} Y_{\{(w_1, v_1), \dots, (w_k, v_k) \}} 
			\stackrel{\eqref{sa2}}{=} \sum_{y \in V(H)} Y_{\{(w_1, v_1), \dots, (w_k, v_k), (y, v_{k+1})\}} \\
			&=\sum_{y \in V(H)} X(w_1 \dots w_k y, \vec{v}) 
			= (\boldsymbol{J}^{k+1}_HX)(\vec{w},\vec{v}).
		\end{align*}
		\item For the swap graphs, the statement follows as in \cref{lem:symmetrisation} since the value of $X$ does not depend on the ordering of the indices.\qedhere
	\end{enumerate}
\end{proof}

As a corollary, we show that $\mathsf{PW}^{k+1}(G, H)$ has a non-negative rational solution if and only if $\Liso[k+1](G,H)$ has a non-negative rational solution. Consequently, the system of linear equations $\mathsf{PW}^{k+1}(G, H)$ has a non-negative rational solution if and only if $G$ and $H$ are homomorphism indistinguishable over graphs of treewidth at most $k$. Hence, the systems of equations 
$\mathsf{PW}^{k+1}(G, H)$, for $k \in \mathbb{N}$, form an alternative well-motivated hierarchy of linear programming relaxations of the graph isomorphism problem.
Examining the proof of \cref{thm:sa}, we obtain the following corollary. 

\begin{corollary} \label{coro:sa}
	Let $k \geq 1$.  Let $G$ and $H$ be simple graphs. Then the following are equivalent:
	\begin{enumerate}
		\item $G$ and $H$ are homomorphism indistinguishable over the class of graphs of treewidth at most~$k$,\label{corit:sa1}
		\item $\mathsf{PW}^{k+1}(G, H)$ has a non-negative rational solution,\label{corit:sa3}
		\item $\Liso[k+1](G,H)$ has a non-negative rational solution.\label{corit:sa2}
	\end{enumerate}
\end{corollary}
\begin{proof}The equivalence of \cref{corit:sa1,corit:sa3} follows from \cref{thm:trees}.
	A combination of results of \cite{atserias_sherali-adams_2013, grohe_pebble_2015} and \cite{dvorak_recognizing_2010, dell_lovasz_2018} yields that \cref{corit:sa1,corit:sa2} are equivalent.
	For a self-contained argument, observe that the transformations devised in \cref{lem:symmetrisation,lem:induction} preserve non-negativity.
\end{proof}

\subsection{An Ordered Variant of Sherali--Adams and Bounded Treedepth}

In this section, we reinterpret the systems of equations in \cref{thm:treedepth-first} as ordered variant of $\Liso[k+1](G,H)$.
In contrast to $\Liso[k+1](G,H)$, it is equivalent for this system to have an arbitrary rational solution and a non-negative rational solution.

Let $k \geq 1$. For graphs $G$ and $H$, consider the following system of equations $\mathsf{TD}^k(G, H)$ with variables $X(\vec{w}, \vec{v})$ for every pair of tuples $\vec{w} \in V(H)^\ell$ and $\vec{v} \in V(G)^\ell$ for $0 \leq \ell \leq k$. A length-$\ell$ pair $(\vec{w}, \vec{v})$ is said to be a \emph{partial strong homomorphisms} if $v_iv_j \in E(G) \iff w_iw_j \in E(H)$ for all $i, j \in [\ell]$. 

\begin{center}
	\begin{fminipage}[.95\linewidth]
		$\mathsf{TD}^k(G,H)$
		\begin{align}
			\sum_{v' \in V(G)} X(\vec{w}w, \vec{v}v') = X(\vec{w}, \vec{v}) &&
			\parbox[t]{6cm}{for all $w \in V(H)$ and $\vec{v} \in V(G)^\ell$, $\vec{w} \in V(H)^\ell$ where $0 \leq \ell < k$.} \tag{TD1}\label{eq:td1} \\
			\sum_{w' \in V(H)} X(\vec{w}w', \vec{v}v) = X(\vec{w}, \vec{v}) &&
			\parbox[t]{6cm}{for all $v \in V(G)$ and $\vec{v} \in V(G)^\ell$, $\vec{w} \in V(H)^\ell$ where $0 \leq \ell < k$.} \tag{TD2}\label{eq:td2} \\
			X((),()) = 1 && \tag{TD3}\label{eq:td3} \\
			X(\vec{w},\vec{v}) = 0 && \parbox[t]{6cm}{whenever $(\vec{w}, \vec{v}) \in V(H)^\ell \times V(G)^\ell$ for $1 \leq \ell \leq k$ is not a partial strong homomorphism} \tag{TD4}\label{eq:td4}
		\end{align}
	\end{fminipage}
\end{center}

The main result of this section is the following:

\treedepth*
\begin{proof}
	Given \cref{thm:treedepth-first}, it suffices to show that $\mathsf{TD}^k(G, H)$ has a (non-negative) rational solution if and only if there is a (doubly stochastic) pseudo-stochastic matrix $X$ such that  $X \boldsymbol{B}_G = \boldsymbol{B}_H X$ for all $\boldsymbol{B} \in \mathcal{TDB}^k$.
	
	We first consider the backward implication.
	Given $X \in \mathbb{Q}^{V(H)^k \times V(G)^k}$ such that $X \boldsymbol{B}_G = \boldsymbol{B}_H X$ for all $\boldsymbol{B} \in \mathcal{TDB}^k$,  observe that if $(\vec{w}, \vec{v}) \in V(H)^k \times V(G)^k$ is not a partial strong homomorphism then $X(\vec{w}, \vec{v}) = 0$.
	Indeed, in this case there exist $1 \leq i \neq j \leq k$ such that $w_iw_j \in E(F) \not\Leftrightarrow v_iv_j \in E(F)$. In particular, precisely one of $\boldsymbol{A}^{ij}_G(\vec{v}, \vec{v})$ and $\boldsymbol{A}^{ij}_H(\vec{w}, \vec{w})$ is zero. Then $\boldsymbol{A}^{ij}_H(\vec{w}, \vec{w}) X(\vec{w}, \vec{v}) = X(\vec{w},\vec{v}) \boldsymbol{A}^{ij}_G(\vec{v}, \vec{v})$ implies that $X(\vec{w},\vec{v}) = 0$. Hence \cref{eq:td4} holds for $\ell = k$.
	With this observation at hand, define a solution to $\mathsf{TD}^k(G, H)$ by invoking the following claim whose proof is analogous to the proof of \cref{lem:induction}.
	\begin{claim} \label{claim:treedepth-induction}
		Let $k \geq 1$.
		If $X \in \mathbb{Q}^{V(H)^{k+1} \times V(G)^{k+1}}$
		is doubly stochastic and such  that $X \boldsymbol{B}_G = \boldsymbol{B}_H X$ for all $\boldsymbol{B} \in \mathcal{TDB}^{k+1}$ then 
		\[
		\sum_{v' \in V(G)} X(\vec{w}w, \vec{v}v') =  \sum_{w' \in V(H)} X(\vec{w}w', \vec{v}v) \eqqcolon \breve{X}(\vec{w},\vec{v})
		\]
		for all $v \in V(G)$, $w \in V(H)$, and $\vec{v} \in V(G)^k$, $\vec{w} \in V(H)^k$. Furthermore, $\breve{X} \in \mathbb{Q}^{V(H)^k \times V(G)^k}$ is doubly stochastic and such that  $\breve{X} \boldsymbol{B}_G = \boldsymbol{B}_H \breve{X}$ for all $\boldsymbol{B} \in \mathcal{TDB}^k$. 
	\end{claim}
A solution to $\mathsf{TD}^k(G, H)$ can now be defined inductively invoking \cref{claim:treedepth-induction}.
	\cref{eq:td1,eq:td2} are immediate from \cref{claim:treedepth-induction}. \cref{eq:td3} follows since double stochasticity is preserved throughout the induction. \cref{eq:td4} follows as observed initially.

	Conversely, define a matrix $X \in \mathbb{Q}^{V(H)^k \times V(G)^k}$ by extracting the values on $V(H)^k \times V(G)^k$ from a solution $Y$ to $\mathsf{TD}^k(G, H)$. By \cref{eq:td1,eq:td2,eq:td3}, $X$ is pseudo-stochastic. It remains to consider commutation with the graphs from \cref{lem:tdk-generates}: Let $(\vec{w}, \vec{v}) \in V(H)^k \times V(G)^k$.
	\begin{enumerate}
		\item If $(\vec{w}, \vec{v})$ is a partial strong homomorphism then $\boldsymbol{A}_G^{ij}(\vec{v}, \vec{v})  = \boldsymbol{A}_H^{ij}(\vec{w}, \vec{w})$ for all $1 \leq i \neq j \leq k$. Hence, $\boldsymbol{A}^{ij}_H(\vec{w}, \vec{w}) X(\vec{w}, \vec{v}) = X(\vec{w},\vec{v}) \boldsymbol{A}^{ij}_G(\vec{v}, \vec{v})$.
		If $(\vec{w}, \vec{v})$ is not a partial strong homomorphism then the same assertions follows readily from \cref{eq:td4}.
		\item For $0 \leq \ell < k$, by repeatedly applying \cref{eq:td1,eq:td2},
		\[
		(X\boldsymbol{J}_G^\ell)(\vec{w},\vec{v})
		= \sum_{v_{\ell+1},\dots, v_k \in V(G)} X(\vec{w}, v_1\dots v_\ell v_{\ell+1}\dots v_k)
		= Y(w_1\dots w_\ell, v_1\dots v_\ell),
		\]
		which in turn is equal to $(\boldsymbol{J}_H^\ell X)(\vec{w},\vec{v})$. \qedhere
	\end{enumerate}
\end{proof}

\section{Homomorphism Counts from Trees Cannot Be Inferred from Homomorphism Counts from Bounded Degree Trees}

In this section, we utilise the techniques introduced in \cref{sec:bounded-degree-trees} to show that homomorphism indistinguishability over bounded degree trees is a strictly finer relation than homomorphism indistinguishability over all trees.
As a consequence \cite{cai_optimal_1992,dvorak_recognizing_2010}, it is not possible to simulate the $1$-dimensional Weisfeiler--Leman algorithm (also known as Colour Refinement)
by counting homomorphisms from trees of any fixed bounded degree.

\degreeGH*

After this work was first presented, Roberson~\cite{roberson_oddomorphisms_2022} showed the following stronger statement: For every $d$, the class of trees of degree at most $d$ is \emph{homomorphism distinguishing closed}, i.e.\@ for every graph $F$ not from this class there exists graphs $G$ and $H$ which are homomorphism indistinguishable over trees of degree at most $d$ but $\hom(F, G) \neq \hom(F, H)$.
Roberson's proof intricately exploits combinatorial properties of \textsmaller{CFI} graphs \cite{cai_optimal_1992}.
Despite being weaker, our \cref{thm:degreeGH} is proven by introducing a novel construction of the graphs $G$ and $H$ which is not akin to the ubiquitous \textsmaller{CFI} construction. 

Towards \cref{thm:degreeGH}, we first show that the nested subspaces $\mathbb{Q}\mathcal{T}_G^d$ for $d \geq 1$ by which the systems of equations in \cref{cor:dary} are parametrised form a strict chain.

\begin{theorem}\label{thm:degree}
	For every integer $d \geq 1$, there exists a simple graph $H$ such that 
	$\mathbb{Q}\mathcal{T}_{H}^d \neq \mathbb{Q}\mathcal{T}_{H}^{d+1}$.
\end{theorem}

\subsection{Proof of \cref{thm:degree}}
\label{sec:proof:thm:degree}

The graph in \cref{thm:degree} will be constructed from a vector of distinct integers. As a first step, a multigraph adjacency matrix is constructed in \cref{prop:degree}. This multigraph is then turned into a simple graph in \cref{lem:multigraph} yielding the object with the desired properties (\cref{lem:spaces}).

All constructions in this section take place in $\mathbb{Q}^n$ for some integer $n$ to be fixed. Let $\allones = \allones_n$ denote the all-ones vector in this space.  Recall from \cref{sec:linalg} that $\odot$ denotes the vector Schur product.
The following \cref{lem:principal} follows by applying arguments similar to \cref{lemma:gs,fact:vd}.

\begin{lemma}[Principal Sequence] \label{lem:principal}
	Let $n \geq 3$.
	Let $a \in \mathbb{N}^n$ be a vector with distinct entries. Let
	\begin{equation} \label{eq:principal-sequence1}
		u_0 \coloneqq \allones + a, \quad u_1 \coloneqq \allones -a, \quad u_i \coloneqq a^{\odot i},
	\end{equation}
	for $2 \leq i \leq n-1$. Define the \emph{principal sequence of $a$} as the sequence of the vectors
	\begin{equation} \label{eq:principal-sequence2}
		v_0 \coloneqq u_0, \quad 
		v_{i+1} \coloneqq u_{i+1} - \sum_{j = 0}^i \frac{\left< u_{i+1}, v_j \right>}{\left< v_j, v_j \right>} v_j
	\end{equation}
	for $0 \leq i \leq n - 2$. Then
	\begin{enumerate}
		\item for $1 \leq i \leq n-1$, the vectors $v_0, \dots, v_i$ span the same subspace as $u_0, \dots, u_i$,
		\item the vectors $v_0, \dots, v_{n-1}$ are mutually orthogonal,
		\item the vectors $v_0, \dots, v_{n-1}$ lie in $\mathbb{Q}^n$.
	\end{enumerate}
\end{lemma}

\begin{lemma}\label{prop:degree}
	Let $d \geq 1$ and $n > d+1$.
	Given a vector $a \in \NN^n$ with distinct entries, 
	there exists a symmetric matrix $M \in \mathbb{N}^{n \times n}$ satisfying the following properties:
	\begin{enumerate}
		\item \label{prop:degree:part1} if $x_1,\dots,x_d \in S_{\leq 1}$ then $x_1 \odot \cdots \odot x_d \in S_{\leq d}$,
		\item \label{prop:degree:part2} if $x_1,\dots,x_d \in S_{\leq 1}$ then $M (x_1 \odot \cdots \odot x_d) \in S_{\leq 1}$,
		\item \label{prop:degree:part3} it holds that $M((M \allones)^{\odot (d+1)}) \not\in S_{\leq d}$. 
	\end{enumerate}
	Here, $S_{\leq i}$ denotes the subspace of $\mathbb{Q}^n$ spanned by the first $i+1$ vectors of the principal sequence of $a$.
\end{lemma}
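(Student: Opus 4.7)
The plan is to take $M := K\,\allones\allones^T + \allones a^T + a\allones^T + v_{d+1}v_{d+1}^T$, where $v_{d+1} \in \mathbb{Z}^n$ is the $(d+1)$-st vector of the principal sequence of $a$ from \cref{lem:principal}, and $K \in \NN$ is chosen larger than $\max_{i,j} |(v_{d+1})_i (v_{d+1})_j|$. Then $M_{ij} = K + a_i + a_j + (v_{d+1})_i (v_{d+1})_j$, so $M$ is symmetric, has integer entries, and satisfies $M_{ij} \geq 1$, i.e.\@ $M \in \NN^{n \times n}$.

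The guiding intuition is that the first three summands $K\,\allones\allones^T$, $\allones a^T$, and $a\allones^T$ all have image contained in $\mathrm{span}\{\allones, a\} = S_{\leq 1}$, while the rank-one correction $v_{d+1}v_{d+1}^T$ annihilates $S_{\leq d}$ (since $v_{d+1} \perp v_0, \ldots, v_d$ and these span $S_{\leq d}$) yet acts non-trivially on $S_{\leq d+1} \setminus S_{\leq d}$. \Cref{prop:degree:part1} is then immediate from the fact that any Schur product of $d$ vectors in $S_{\leq 1}$ is a polynomial in $a$ of degree at most $d$, hence lies in $\mathrm{span}\{\allones, a, \ldots, a^{\circ d}\} = S_{\leq d}$ by the span property in \cref{lem:principal}.

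For \cref{prop:degree:part2}, for any $x \in S_{\leq d}$ orthogonality yields $v_{d+1}^T x = 0$, whence $Mx = (K\,\allones^T x + a^T x)\,\allones + (\allones^T x)\, a \in S_{\leq 1}$. For \cref{prop:degree:part3}, I would first compute $M\allones = (Kn + s_1)\,\allones + n\, a$ with $s_1 := \sum_i a_i$, again using $v_{d+1}^T \allones = 0$. Writing $p := Kn + s_1$ and $q := n \neq 0$, the binomial expansion
\[
(M\allones)^{\circ(d+1)} = \sum_{k=0}^{d+1}\binom{d+1}{k} p^{d+1-k} q^k\, a^{\circ k}
\]
has non-zero leading coefficient $q^{d+1} = n^{d+1}$ on $a^{\circ(d+1)}$. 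Since $a$ has distinct entries and $n > d+1$, the Vandermonde argument of \cref{fact:vd} gives $a^{\circ(d+1)} \notin S_{\leq d}$, so $v_{d+1}^T a^{\circ(d+1)} \neq 0$. Applying $M$ once more, all $k \leq d$ terms contribute only to $S_{\leq 1}$, while $M a^{\circ(d+1)}$ acquires a non-zero multiple of $v_{d+1}$, yielding $M((M\allones)^{\circ(d+1)}) = y + n^{d+1}(v_{d+1}^T a^{\circ(d+1)})\, v_{d+1}$ for some $y \in S_{\leq 1}$, which is manifestly not in $S_{\leq d}$.

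The main obstacle I anticipate is balancing strict positivity of the entries of $M$ against the precise eigenstructure required for \cref{prop:degree:part2,prop:degree:part3}. A naive rank-two $M$ built only from $\allones$ and $a$ would force $M((M\allones)^{\circ(d+1)}) \in S_{\leq 1}$, breaking \cref{prop:degree:part3}; meanwhile, using $v_{d+1}v_{d+1}^T$ alone generally introduces negative entries. Each summand in the proposed $M$ is therefore essential: the linear cross term $\allones a^T + a\allones^T$ forces $M\allones$ to have a non-trivial $a$-component (otherwise $(M\allones)^{\circ(d+1)}$ collapses into $\mathrm{span}\{\allones\}$), the cushion $K\,\allones\allones^T$ absorbs any negative entries from $v_{d+1}v_{d+1}^T$, and placing the correction at index exactly $d+1$ (rather than any smaller index) is what ensures $M$ still collapses $S_{\leq d}$ entirely into $S_{\leq 1}$.
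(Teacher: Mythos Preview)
Your proof is correct and follows essentially the same strategy as the paper: construct $M$ as a rank-one term $v_{d+1}v_{d+1}^T$ (orthogonal to $S_{\leq d}$, supplying the non-trivial action needed for \cref{prop:degree:part3}) plus a ``bulk'' term with image in $S_{\leq 1}$, then add a large positive multiple of a positive matrix to force all entries into $\NN$. The only difference is in the choice of bulk term: the paper takes $\lambda v_0 v_0^T + v_1 v_1^T$ and must separately argue that $\lambda$ can be chosen so that $\allones$ is not an eigenvector of $M$ (to guarantee the $a$-coefficient $\beta$ in $M\allones = \alpha\allones + \beta a$ is non-zero), whereas your choice $K\allones\allones^T + \allones a^T + a\allones^T$ makes that coefficient equal to $n$ outright. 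This is a minor but genuine simplification; otherwise the two arguments coincide, including the binomial expansion and Vandermonde step for \cref{prop:degree:part3}.
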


\begin{proof}
	Construct invoking \cref{lem:principal} the principal sequence $v_0, \dots, v_{n-1} \in \mathbb{Q}^n$ of the vector $a$.
	Define the matrix $M \in \mathbb{N}^{n\times n}$ as follows. First let 
	\begin{equation} \label{eq:matrixm}
		M_\lambda \coloneqq \lambda v_0 {v_0}^T + v_1 {v_1}^T + v_{d+1}{v_{d+1}}^T
	\end{equation} 
	where $\lambda \in \NN$ is chosen to be a sufficiently large positive integer such that $M_\lambda$ has only positive entries, and $\boldsymbol{1}$ is not an eigenvector of $M_\lambda$.
	This is possible because the matrix $v_0 {v_0}^T$ defined via $v_0 = a + \boldsymbol{1}$ has only positive entries, and $\boldsymbol{1}$ is none of its eigenvectors.
	Define $M \in \NN^{n\times n}$ as the matrix obtained from $M_\lambda$ for appropriate $\lambda$ by clearing all denominators, i.e.\ by multiplying with a sufficiently large $\lambda' \in \mathbb{N}$.
	Observe that $M$ is a symmetric matrix with positive integral entries. Moreover, the rank of $M$ is exactly $3$. 
	
	Having defined $M$, it remains to verify the assertions.
	For \cref{prop:degree:part1}, if $x_1,\dots,x_d \in S_{\leq 1}$ then each of these vectors is a linear 
	combination of $\boldsymbol{1}$ and $a$. By bilinearity of the Schur product, the product 
	$x_1 \odot \cdots \odot x_d$ must be a linear combination of the vectors $\boldsymbol{1}, \dots, a^{\odot d}$. Hence, $x_1 \odot \cdots \odot x_d \in S_{\leq d}$. 
	
	For \cref{prop:degree:part2},  if $x_1,\dots,x_d \in S_{\leq 1}$ then $x_1 \odot \cdots \odot x_d \in S_{\leq d}$. Since $M$ annihilates all vectors in $S_{\leq d}$ except for those in $S_{\leq 1}$, it holds that $M(x_1 \odot \cdots \odot x_d) \in S_{\leq 1}$. 
	
	For \cref{prop:degree:part3}, write $M \boldsymbol{1} = \alpha \boldsymbol{1} + \beta a$, for some $\alpha,\beta \in \mathbb{Q}$. It holds that $\beta \neq 0$ because $\boldsymbol{1}$ is not an eigenvalue of $M$ by construction. The resulting expansion of $(M\allones)^{\odot (d+1)}$ is a linear combination of the vectors $\boldsymbol{1}, a , a \odot a, \dots, a^{\odot (d+1)}$. Moreover, in this binomial expansion, the coefficient of $a^{\odot (d+1)}$ is $\beta^{d+1}$, which is non-zero. The remaining terms in this expansion lie in $S_{\leq d}$ already. Hence, $(M\allones)^{\odot (d+1)} \not\in S_{\leq d}$. Furthermore, the coefficient of $a^{\odot (d+1)}$ in $M(M\allones)^{\odot (d+1)}$ is also non-zero because $v_{d+1}$ is a non-vanishing eigenvector of $M$. Hence, $M(M\allones)^{\odot (d+1)} \not\in S_{\leq d}$.
\end{proof} 

Since the matrix $M$ is symmetric and has non-negative integral entries, it can be thought of as the adjacency matrix of a multigraph, i.e.\@ an undirected graph with multiedges. The objective is to construct a simple undirected graph~$H$ such that the eigensystem of the adjacency matrix $H$ is closely related to that of~$M$.

\begin{lemma}\label{lem:multigraph}
	For every symmetric matrix~$M \in \NN^{n \times n}$, there exists an integer $N$ and a simple graph $H$ with vertex set $[n] \times [N]$ such that $\boldsymbol{A}_H\widehat{z} = (Mz) \otimes \boldsymbol{1}_{N}$ for all $z \in \mathbb{Q}^n$ with $\widehat{z} \coloneqq z \otimes \allones_N$.
\end{lemma}
\begin{proof}
The graph $H$ is constructed from $M$ as follows. 
	Fix a large positive integer~$N$ such that it is divisible by every entry of~$M$. 
	The vertex set of $H$ is defined to be $[n] \times [N]$. 
	The edge set of $H$ is determined as follows.
	\begin{itemize}
		\item For each $u \in [n]$, the graph induced on the vertex subset $\{u\} \times [N]$ is set to be an arbitrary regular graph of degree $M_{uu}$.
		\item For distinct $u, v \in [n]$, the bipartite graph induced between the vertex subsets $\{u\} \times [N]$ and $\{v\} \times [N]$ is set to be an arbitrary bi-regular graph with left and right degree $M_{uv} = M_{vu}$.
	\end{itemize}
	The generous choice of $N$ ensures that the arbitrary regular and bi-regular graphs stipulated above exist, cf.\@ e.g.\@ \cite[Lemma~3.2]{kiefer_graphs_2022}.
	Write $A \coloneqq \boldsymbol{A}_H$ for the adjacency matrix of $H$.
	The entry of $\boldsymbol{A}_H(z \otimes \boldsymbol{1}_{N})$ corresponding to a vertex $(u,j) \in V(H)$ is equal to 
	\begin{align*}
		\sum_{(u',j') \in V(H)} A_{(u,j),(u',j')} (z \otimes \boldsymbol{1}_{N})_{(u',j')}  
		&= \sum_{u' \in [n]} \sum_{j' \in [N]}  A_{(u,j),(u',j')} (z \otimes \boldsymbol{1}_{N})_{(u',j')} \\
		&= \sum_{u' \in [n]} \sum_{j' \in [N]}  A_{(u,j),(u',j')} z_{u'}\\
		&= \sum_{u' \in [n]} z_{u'} \sum_{j' \in [N]}  A_{(u,j),(u',j')}  \\
		&= \sum_{u' \in [n]} M_{uu'} z_{u'}  \\
		&= (Mz)_u 
	\end{align*}
	which does not depend on $j$.
\end{proof}

Let $\boldsymbol{S}^{d+1} \coloneqq \boldsymbol{A} \cdot (\boldsymbol{A}\boldsymbol{1})^{\odot d+1}$ denote the labelled star graph with label at one of its $d+2$ leaves. Clearly, $\boldsymbol{S}^{d+1} \in \mathcal{T}^{d+1} \setminus \mathcal{T}^d$.

\begin{lemma}\label{lem:spaces}
	Let $d \geq 1$ and $n > d+1$.
	Let $a \in \NN^n$ be a vector with distinct entries and principal sequence $(v_0, \dots, v_{n-1})$.
	Let $H$ denote the graph constructed from $a$ via \cref{prop:degree,lem:multigraph}.
	Then
	\begin{enumerate}
		\item for every labelled tree $\boldsymbol{B} \in \mathcal{T}^d$, the vector $\boldsymbol{B}_{H}$ belongs to the span of $\{ \widehat{v_0}, \widehat{v_1} \}$.\label{claim:daryspan}
		\item the vector $\boldsymbol{S}^{d+1}_H$ does not belong to the span of $\{ \widehat{v_0}, \widehat{v_1} \}$.\label{claim:starspan}
	\end{enumerate}
\end{lemma}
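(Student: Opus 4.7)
The plan is to exploit the two intertwining rules that follow from the constructions: first, $\boldsymbol{A}_H \widehat{z} = \widehat{Mz}$ by \cref{lem:multigraph}, and second, $\widehat{z} \circ \widehat{w} = \widehat{z \circ w}$ because $\boldsymbol{1}_N^{\circ 2} = \boldsymbol{1}_N$. Together, these reduce combinatorial operations on homomorphism tensors of $H$ to algebraic operations on $\mathbb{C}^n$ via the injective hat map $z \mapsto z \otimes \boldsymbol{1}_N$. The strategy is then to track in what subspace of $\mathbb{C}^n$ the preimages $\boldsymbol{B}_H = \widehat{z_{\boldsymbol{B}}}$ live, using properties~\ref{prop:degree:part1}--\ref{prop:degree:part3} of $M$.

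For \cref{claim:daryspan}, I would argue by induction on the recursive construction of $\mathcal{T}^d$ as described after \cref{cor:dary}, namely that every $\boldsymbol{B} \in \mathcal{T}^d$ is either the one-vertex graph~$\boldsymbol{1}$ or of the form $\circledast^d(\boldsymbol{R}^1,\dots,\boldsymbol{R}^d) = \boldsymbol{A} \cdot (\boldsymbol{R}^1 \circ \cdots \circ \boldsymbol{R}^d)$ with $\boldsymbol{R}^i \in \mathcal{T}^d$ of strictly lower depth (with $\boldsymbol{1}$-padding if necessary). In the base case, $\boldsymbol{1}_H = \widehat{\boldsymbol{1}_n}$ and $\boldsymbol{1}_n = \tfrac12(v_0 + v_1) \in S_{\leq 1}$. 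For the step, the inductive hypothesis gives $\boldsymbol{R}^i_H = \widehat{z_i}$ with $z_i \in S_{\leq 1}$. Then
\[
\boldsymbol{B}_H = \boldsymbol{A}_H\bigl(\widehat{z_1} \circ \cdots \circ \widehat{z_d}\bigr) = \boldsymbol{A}_H\,\widehat{z_1 \circ \cdots \circ z_d} = \widehat{M(z_1 \circ \cdots \circ z_d)}.
\]
Property~\ref{prop:degree:part1} of \cref{prop:degree} puts $z_1 \circ \cdots \circ z_d$ into $S_{\leq d}$, and property~\ref{prop:degree:part2} then forces $M(z_1 \circ \cdots \circ z_d) \in S_{\leq 1} = \operatorname{span}\{v_0, v_1\}$, completing the induction.

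For \cref{claim:starspan}, I would just unfold the definition of $\boldsymbol{S}^{d+1}$ and apply the same two rewriting rules:
\[
\boldsymbol{S}^{d+1}_H = \boldsymbol{A}_H\bigl((\boldsymbol{A}_H \boldsymbol{1}_{V(H)})^{\circ(d+1)}\bigr) = \boldsymbol{A}_H\bigl(\widehat{(M\boldsymbol{1}_n)^{\circ (d+1)}}\bigr) = \widehat{M\bigl((M\boldsymbol{1}_n)^{\circ(d+1)}\bigr)}.
\]
Since the hat map is injective, $\boldsymbol{S}^{d+1}_H \in \operatorname{span}\{\widehat{v_0},\widehat{v_1}\}$ would force $M((M\boldsymbol{1}_n)^{\circ(d+1)}) \in S_{\leq 1} \subseteq S_{\leq d}$, contradicting property~\ref{prop:degree:part3}.

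The conceptual heart of the argument is really \cref{prop:degree}, which has already been proved; here we only need to verify that the hat map commutes with the two graph-algebraic operations (series composition with $\boldsymbol{A}$ and Schur product) in the right way, which is mostly bookkeeping. The only mild care needed is to observe that the induction in \cref{claim:daryspan} covers all of $\mathcal{T}^d$, so one must allow some of the $\boldsymbol{R}^i$ in the $\circledast^d$ decomposition to be the one-vertex graph in order to accommodate vertices of root degree less than $d+1$; since $\boldsymbol{1} \in \mathcal{T}^d$, the inductive hypothesis still applies.
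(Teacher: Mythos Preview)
Your argument is essentially the same as the paper's: both reduce to $\mathbb{C}^n$ via the hat map, induct over the $\circledast^d$-structure of $\mathcal{T}^d$ using \cref{prop:degree}\ref{prop:degree:part2} for part~\ref{claim:daryspan}, and unfold $\boldsymbol{S}^{d+1}_H$ to invoke \cref{prop:degree}\ref{prop:degree:part3} for part~\ref{claim:starspan}. One small slip: the identity $\boldsymbol{1}_n = \tfrac12(v_0+v_1)$ is generally false, since $v_1 = u_1 - \langle u_1, v_0\rangle v_0$ differs from $u_1$; what you need (and what the paper uses) is simply $\boldsymbol{1}_n = \tfrac12(u_0+u_1) \in \operatorname{span}\{u_0,u_1\} = \operatorname{span}\{v_0,v_1\} = S_{\leq 1}$ by \cref{lem:principal}(1).
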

\begin{proof}
	Observe that for $x, y \in \mathbb{Q}^n$, $\widehat{x} \odot \widehat{y} = (x \otimes \boldsymbol{1}_N) \odot (y \otimes \boldsymbol{1}_N) = (x \odot y) \otimes \boldsymbol{1}_N$. So the assertions of \cref{prop:degree} on the layout of the eigenspaces of $M$ carry over to the eigenspaces of $\boldsymbol{A}_H$.
	
	First \cref{claim:daryspan} is proved by structural induction on $\boldsymbol{B}$.
	For the base case, $\boldsymbol{B}$ is the $1$-labelled one-vertex graph, and hence, $\boldsymbol{B}_{H} = \allones_{H} = \allones_{n} \otimes \allones_{N}$.
	Since $\allones_n$ lies in the span of $\{ v_0, v_1 \}$, $\boldsymbol{B}_{H}$ lies in the span of $\{ \widehat{v_0}, \widehat{v_1}\}$. For the inductive step, suppose $\boldsymbol{B} = \circledast^d(\boldsymbol{C}^1, \dots, \boldsymbol{C}^d) = \boldsymbol{A}(\boldsymbol{C}^1 \odot \cdots \odot \boldsymbol{C}^d)$ for smaller $\boldsymbol{C}^1, \dots,\boldsymbol{C}^d \in \mathcal{T}^d$. By \cref{prop:degree,lem:multigraph}, and the inductive hypothesis, the $\boldsymbol{C}^i_H$ lie in the span of 
	$\{ \widehat{v_0}, \widehat{v_1}\}$, and $\boldsymbol{A}_H(\boldsymbol{C}^1_H \odot \cdots \odot\boldsymbol{C}^d_H)$ also lies in the span of $\{ \widehat{v_0}, \widehat{v_1}\}$.

	For \cref{claim:starspan}, observe that by \cref{lem:multigraph},
	\begin{align}
		\boldsymbol{S}^{d+1}_H &= \boldsymbol{A}_H (\boldsymbol{A}_H \allones_H)^{\odot (d+1)}
		= \boldsymbol{A}_H( M\allones_n \otimes \allones_N)^{\odot (d+1)}
		= \boldsymbol{A}_H ( (M\allones_n)^{\odot (d+1)} \otimes \allones_N ) \nonumber \\
		&= (M(M\allones_n)^{\odot (d+1)}) \otimes \allones_N. \label{eq:homstar}
	\end{align}
	By \cref{prop:degree:part3} of \cref{prop:degree}, $\boldsymbol{S}^{d+1}_H$ is not contained in the span of $\{ \widehat{v_0}, \widehat{v_1} \}$.
\end{proof}

This completes the preparations for the proof of \cref{thm:degree}.

\begin{proof}[Proof of \Cref{thm:degree}]
	For the given degree $d$, choose $n > d+1$ and an arbitrary $a \in \NN^n$ with distinct entries. Construct the desired graph $H$ via \cref{prop:degree,lem:multigraph}. \Cref{thm:degree} then immediately follows from \cref{lem:spaces}.
\end{proof}

\subsection{Proof of \Cref{thm:degreeGH}}
\label{sec:proof:thm:degreeGH}

For a vector $x \in \mathbb{Q}^n$ and $p \in \NN$ write $\norm{x}_p = \left(\sum_{i=1}^n \abs{x_i}^p \right)^{1/p}$ for the \emph{$p$-norm} of $x$.

\begin{lemma} \label{lem:2graphs}
	Let $d \geq 1$ and $n > d+1$.
	Let $a, b \in \NN^n$ be vectors with distinct entries. Suppose that $\norm{a}_i = \norm{b}_i$ for all $1 \leq i \leq 2d$, and that there exists $\ell$ such that $\norm{a}_\ell \neq \norm{b}_\ell$. Then there exist simple graphs $G$ and $H$ such that
	\begin{enumerate}
		\item $G$ and $H$ are homomorphism indistinguishable over the class of $d$-ary trees.\label{as:similar}
		\item $G$ and $H$ are not homomorphism indistinguishable over the class of all trees.\label{as:nottoosimilar}
	\end{enumerate}
\end{lemma}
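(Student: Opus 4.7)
I would construct $G$ from $a$ and $H$ from $b$ by the same recipe as in the proof of \cref{thm:degree}: form the symmetric positive integer matrices $M_a, M_b \in \mathbb{N}^{n\times n}$ via the rank-three formula~\eqref{eq:matrixm}, with a common constant $\lambda$ chosen large enough to ensure positivity of both matrices and that $\boldsymbol{1}_n$ is an eigenvector of neither, and then lift both to simple graphs via \cref{lem:multigraph} using one common inflation factor $N$ divisible by every entry of $M_a$ and $M_b$. This produces $\lvert V(G)\rvert = \lvert V(H)\rvert = nN$.

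For part~(\ref{as:similar}), by \cref{lem:spaces} every $\boldsymbol{T}\in\mathcal{T}^d$ satisfies $\boldsymbol{T}_G = \widehat{x^a_T}$ with $x^a_T \in S_{\le 1}^a = \mathrm{span}(\boldsymbol 1_n, a)$, and analogously for $H$. Writing $x^a_T = \alpha_T(a)\,\boldsymbol 1_n + \beta_T(a)\,a$, I would prove by structural induction on the guarded construction $\boldsymbol T = \circledast^d(\boldsymbol{T}_1,\ldots,\boldsymbol{T}_d)$ that $\alpha_T$ and $\beta_T$ are rational polynomials in the ``low moments'' $\|a\|_1^1, \|a\|_2^2,\ldots, \|a\|_{2d}^{2d}$ of $a$. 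The two ingredients are: (a) a $d$-fold Schur product of vectors in $\mathrm{span}(\boldsymbol 1, a)$ lies in $\mathrm{span}(\boldsymbol 1, a, \ldots, a^{\circ d})$ by bilinear expansion; (b) the subsequent application of $M_a$ re-projects onto $S_{\le 1}^a$ using only the Gram--Schmidt change-of-basis between $\{\boldsymbol 1, a, \ldots, a^{\circ d}\}$ and $\{v_0^a, \ldots, v_d^a\}$ together with the squared norms $\|v_0^a\|^2$ and $\|v_1^a\|^2$, all of which are rational functions of the inner products $\langle a^{\circ i}, a^{\circ j}\rangle = \|a\|_{i+j}^{i+j}$ with $i+j\le 2d$. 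Since the polynomials are identical in form for $a$ and $b$, the hypothesis $\|a\|_i = \|b\|_i$ for $i \le 2d$ forces $\alpha_T(a)=\alpha_T(b)$ and $\beta_T(a)=\beta_T(b)$, whence $\hom(T, G) = N\bigl(\alpha_T \cdot n + \beta_T \|a\|_1\bigr) = \hom(T, H)$.

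For part~(\ref{as:nottoosimilar}), the distinguishing trees are the stars $K_{1,m}$ with $m > d+1$, which are not $d$-ary. By \cref{lem:multigraph} every vertex $(u, j)\in V(G)$ has degree $(M_a \boldsymbol 1_n)_u$, so $\hom(K_{1,m}, G) = N \cdot \|M_a \boldsymbol 1_n\|_m^m$. Writing $M_a \boldsymbol 1_n = p\,\boldsymbol 1_n + q\,a \in S_{\le 1}^a$, the choice of $\lambda$ ensures $q \ne 0$, and the scalars $p, q$ depend only on $n, \|a\|_1, \|a\|_2^2$, hence equal the corresponding scalars for $b$. Binomial expansion gives
\begin{align*}
\hom(K_{1,m}, G) - \hom(K_{1,m}, H) = N \sum_{k > 2d} \binom{m}{k} p^{m-k} q^k \bigl(\|a\|_k^k - \|b\|_k^k\bigr),
\end{align*}
where the moments with $k\le 2d$ have cancelled by assumption. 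Choosing $m = \ell_0 \coloneqq \min\{\ell : \|a\|_\ell \ne \|b\|_\ell\}$, which satisfies $\ell_0 > 2d$ by hypothesis, collapses the sum to the single non-zero term $N q^{\ell_0}\bigl(\|a\|_{\ell_0}^{\ell_0} - \|b\|_{\ell_0}^{\ell_0}\bigr) \ne 0$, establishing the claim.

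The main obstacle is the careful bookkeeping of moment orders in step~(b): one must verify that no inner product $\langle a^{\circ i}, a^{\circ j}\rangle$ with $i + j > 2d$ ever arises during the recursive computation of $x^a_T$. This ultimately reflects that the guarded operator $\circledast^d$ caps the degree in $a$ at $d$ between consecutive applications of $M_a$, which immediately projects back to degree at most one; no moment beyond order $2d$ is therefore produced, and this is what pins down the bound $2d$ in the hypothesis of the lemma.
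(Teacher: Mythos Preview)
Your argument is correct, and for part~(\ref{as:nottoosimilar}) it coincides with the paper's. For part~(\ref{as:similar}), however, you take a genuinely different route. The paper does not track the coefficients $\alpha_T,\beta_T$ explicitly; instead it observes that the hypothesis $\|a\|_i=\|b\|_i$ for $i\le 2d$ forces $\langle a^{\circ i},a^{\circ j}\rangle=\langle b^{\circ i},b^{\circ j}\rangle$ for all $0\le i,j\le d$, so by a Gram--Schmidt argument (\cref{lemma:gs}) there is a unitary $U$ from $\mathrm{span}(\boldsymbol 1,a,\ldots,a^{\circ d})$ to $\mathrm{span}(\boldsymbol 1,b,\ldots,b^{\circ d})$ sending $a^{\circ i}\mapsto b^{\circ i}$. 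This $U$ then carries the initial segments of the principal sequences to one another, whence its lift $\widehat U$ intertwines $\boldsymbol A_G$ with $\boldsymbol A_H$ on the relevant subspace (the eigenvalues $\lambda,1,0$ are the same by construction). A short structural induction shows $\widehat U\boldsymbol T_G=\boldsymbol T_H$ for all $\boldsymbol T\in\mathcal T^d$, and \cref{thm:meta} (via \cref{cor:dary}) finishes the argument.

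The two arguments are essentially dual: your ``the coefficients are the same rational function of the low moments'' is exactly what makes the paper's Gram matrices match and hence the unitary exist. Your approach is more self-contained and makes the role of the bound $2d$ fully explicit in the bookkeeping; the paper's approach is shorter because it offloads the induction to the framework of \cref{sec:onelabel}, and it illustrates the paper's thesis that equational characterisations (here, the existence of $U$) mediate homomorphism indistinguishability. One minor point of phrasing: your $\alpha_T,\beta_T$ are rational \emph{functions}, not polynomials, since the Gram--Schmidt step introduces the denominator $\|v_0^a\|^2$; this does not affect the argument since that quantity is positive and depends only on $n,\|a\|_1,\|a\|_2^2$.
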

\begin{proof}
	Construct symmetric matrices $M$ and $L$ with non-negative integral entries from $a$ and $b$, respectively, invoking \cref{prop:degree}, and subsequently convert these into the adjacency matrices of simple graphs $G$ and $H$, respectively, via \cref{lem:multigraph}. In this process, the integers $\lambda$, $\lambda'$, and $N$ are chosen to be respectively equal for $G$ and $H$, which is always feasible.
	
	For \cref{as:similar}, let $V \leq \mathbb{Q}^{n}$ denote the space spanned by $\allones, a, \dots, a^{\odot d}$, and analogously $W \leq \mathbb{Q}^n$ the space spanned by $\allones, b, \dots, b^{\odot d}$. Observe that for $0 \leq i, j \leq d$,
	\[
	\left< a^{\odot i}, a^{\odot j} \right>
	= \norm{a}_{i+j}^{i+j}
	= \norm{b}_{i+j}^{i+j}
	= \left< b^{\odot i}, b^{\odot j} \right>
	\]
	Hence, \cref{lemma:gs} guarantees the existence of an orthogonal $U \colon V \to W$ such that $Ua^{\odot i} = b^{\odot i}$ for all $0 \leq i \leq d$.
	Let $v_0, \dots, v_d$ and $w_0, \dots, w_d$ denote the length-$(d+1)$ initial segment of the principal sequence of $a$ and of $b$, respectively, cf.\ \cref{lem:principal}.
	\begin{claim} \label{cl:maps-principal}
		For all $0 \leq i \leq d$, $Uv_i = w_i$.
	\end{claim}
	\begin{claimproof}
		The claim is shown by induction on the definition of the principal sequence in \cref{lem:principal}.
		Let $u_0, \dots, u_d$ and $u'_0, \dots, u'_d$ be for $a$ and $b$ as in \cref{eq:principal-sequence1}.
		Clearly, $Uu_i = u'_i$ for all $0 \leq i \leq d$. 
		Thus, for $i = 0$, $Uv_0 = U(\boldsymbol{1} + a) = \boldsymbol{1} + b = w_0$.
		Furthermore, for $i \geq 0$,
		\begin{align*}
			Uv_{i+1} & = Uu_{i+1} +  \sum_{j = 0}^i \frac{\left< u_{i+1}, v_j \right>}{\left< v_j, v_j \right>} U v_j
			= u'_{i+1} + \sum_{j = 0}^i \frac{\left< Uu_{i+1}, Uv_j \right>}{\left< Uv_j, Uv_j \right>} w_j
			= u'_{i+1} + \sum_{j = 0}^i \frac{\left< u'_{i+1}, w_j \right>}{\left< w_j,w_j \right>} w_j,
		\end{align*}
		by orthogonality of $U$, as desired.
	\end{claimproof}
	Recall from \cref{lem:multigraph}, that $\widehat{z} = z \otimes \allones_N$ denotes the lift of a vector $z \in \mathbb{Q}^n$ to a vector in $\mathbb{Q}^{V(G)} = \mathbb{Q}^{V(H)}$. Write $\widehat{V}$ and $\widehat{W}$ for the lifts of all vectors in $V$ and in $W$, respectively. The map $U$ can be lifted to a map $\widehat{U} \colon \widehat{V} \to \widehat{W}$ via $\widehat{U} \widehat{z} \coloneqq (Uz) \otimes \boldsymbol{1}_N$, $z \in V$. Checking that $\widehat{U}$ is well-defined and unitary amounts to a straightforward calculation.
	
	Towards proving in \cref{cl:uhatsolution} that $\widehat{U}$ is a solution to the system of equations in \cref{cor:dary}, we show the following claim:
	
	\begin{claim}\label{claim:uhatcommuteswitha}
		$\widehat{U}\boldsymbol{A}_G = \boldsymbol{A}_H \widehat{U}$ as maps $\widehat{V} \to \widehat{W}$. 
	\end{claim}
	\begin{claimproof}
		By \cref{lem:multigraph,cl:maps-principal}, for $0 \leq i \leq d$,
		\[
		\widehat{U}\boldsymbol{A}_G \widehat{v_i}
		= \widehat{U} (Mv_i \otimes \allones_N)
		= \mu_i \widehat{U} (v_i \otimes \allones_N)
		= \mu_i(Uv_i) \otimes \allones_N
		= \mu_i w_i \otimes \allones_N
		= L w_i \otimes \allones_N
		= \boldsymbol{A}_H \widehat{U} \widehat{v_i}
		\]
		for $\mu_i \in \{0, 1, \lambda\}$ the $i$-th eigenvalue of $M$ and $L$, cf.\@ \cref{eq:matrixm}.
	\end{claimproof}
	
	\begin{claim} \label{cl:uhatsolution}
		For all $\boldsymbol{B} \in \mathcal{T}^{2d}$, $\widehat{U} \boldsymbol{B}_G = \boldsymbol{B}_H$.
	\end{claim}
	\begin{claimproof}
		By structural induction on $\boldsymbol{B}$. In the base case $\boldsymbol{B} = \boldsymbol{1}$, the claim holds by construction observing that $\boldsymbol{1}_G = \boldsymbol{1}_n \otimes \boldsymbol{1}_N = \widehat{\boldsymbol{1}_n}$.
		For the inductive step, suppose that $\boldsymbol{B} = \circledast^d(\boldsymbol{C}^1, \dots, \boldsymbol{C}^d)$.
		By \cref{claim:daryspan} of \cref{lem:spaces}, write $\boldsymbol{C}^i_G = \alpha_i \allones_G + \beta_i \widehat{a}$ for some $\alpha_i, \beta_i \in \mathbb{Q}$. By induction, $\alpha_i \allones_H + \beta_i \widehat{b} = \widehat{U}\boldsymbol{C}^i_G = \boldsymbol{C}^i_H$.
		The vector $\boldsymbol{C}^1_G \odot \dots \odot \boldsymbol{C}^d_G$ can be written as linear combination of the vectors $\allones_G, \widehat{a}, \dots, \widehat{a}^{\odot d}$ since each individual vector is a linear combination of $\allones_G$ and $\widehat{a}$.
		Hence, since $U a^{\odot i} = b^{\odot i}$ for $0 \leq i \leq d$, $\widehat{U}(\boldsymbol{C}^1_G \odot \dots \odot \boldsymbol{C}^d_G) = \boldsymbol{C}^1_H \odot \dots \odot \boldsymbol{C}^d_H$. By \cref{claim:uhatcommuteswitha}, $\widehat{U}$ preserves $\circledast^d$ and thus $\widehat{U}\boldsymbol{B}_G = \boldsymbol{B}_H$.
	\end{claimproof}
	
	By \cref{cor:dary}, $G$ and $H$ are homomorphism indistinguishable over the class of $d$-ary trees.
	It remains to verify \cref{as:nottoosimilar}. Let $\ell$ be the least positive integer such that $\norm{a}_\ell \neq \norm{b}_\ell$.
	
	\begin{claim} \label{cl:same-coeff}
		There exist coefficients $\alpha, \beta \in \mathbb{Q}$ such that  $M \boldsymbol{1}_n = \alpha \boldsymbol{1}_n + \beta a$ and $L \boldsymbol{1}_n = \alpha \boldsymbol{1}_n + \beta b$.
	\end{claim}
	\begin{claimproof}
		For legibility, we drop the index~$n$.
		By definition in \cref{prop:degree}, 
		\begin{align*}
			M\boldsymbol{1} & = \lambda \left< v_0 , \boldsymbol{1} \right> v_0 + \left< v_1, \boldsymbol{1} \right> v_1 \\
			&= \lambda \left< v_0 , \boldsymbol{1} \right> (\boldsymbol{1} + a) + \left< v_1, \boldsymbol{1} \right> \left(\boldsymbol{1} - a - \frac{\left< u_1, v_0 \right> }{\left< v_0, v_0 \right>} (\boldsymbol{1} - a) \right).
		\end{align*}
		The same equality holds when $M$ is replaced by $L$, $v_0$ by $w_0$, $v_1$ by $w_1$, and $a$ by $b$. 
		By \cref{cl:maps-principal}, $Uv_0 = w_0$ and $Uv_1 = w_1$.
		By construction, $U \boldsymbol{1} = U a^{\odot 0} = b^{\odot 0} = \boldsymbol{1}$.
		Since $U$ is orthogonal, the inner-products between $\boldsymbol{1}$, $v_0$, and $v_1$ are the same as the respective inner-products between $\boldsymbol{1}$, $w_0$, and $w_1$.
		This implies the claim.
	\end{claimproof}
	Let $S$ denote the star with $\ell$ leaves underlying the $1$-labelled graph $(\boldsymbol{A}\boldsymbol{1})^{\odot \ell}$. In contrary to the $1$-labelled star considered in \cref{lem:spaces}, this graph has its label at the central vertex. By \cref{cl:same-coeff}, as in \cref{eq:homstar},
	\begin{align*}
		\hom(S, G)
		= \soe (\boldsymbol{A}_G \boldsymbol{1}_G)^{\odot \ell}
		= N \soe (M\boldsymbol{1}_n)^{\odot \ell}
		= N \sum_{i = 0}^\ell \alpha^{\ell -i} \beta^{i} \soe(a^{\odot i})
		= N \sum_{i = 0}^\ell \alpha^{\ell-i} \beta^{i} \norm{a}_{i}^{i}.
	\end{align*}
	Replacing $a$ by $b$ yields an expression which equals $\hom(S, H)$. Hence,
	\[
	\hom(S, G) - \hom(S, H) = N \beta^\ell \left( \norm{a}_\ell^\ell - \norm{b}_\ell^\ell \right) \neq 0
	\]
	since $\beta \neq 0$ as $\allones_n$ is neither an eigenvector of $M$ nor of $L$, and $\ell$ was chosen to be minimal.
\end{proof}

Given \cref{lem:2graphs}, it remains to construct vectors $a, b \in \NN^n$ such that $\norm{a}_i = \norm{b}_i$ for all $i \leq 2d$ but $\norm{a}_\ell \neq \norm{b}_\ell$ for some $\ell$. 
By Newton's identities, this amounts to constructing vectors $a, b \in \NN^n$ satisfying the former condition such that their multisets of entries are not the same.
The fact that such pairs of vectors exist was established in the following number-theoretic result resolving the Prouhet--Tarry--Escott problem.

\begin{theorem}[{Prouhet--Thue--Morse Sequence~\cite{allouche_ubiquitous_1999}}] \label{thm:ptm}
	Let $d \geq 1$. 
	If the numbers in $\{0, \dots, 2^{d+1}-1\}$ are partitioned  into two sets $S_0 = \{x_1,\dots,x_n\}$ and $S_1 = \{y_1,\dots,y_n\}$ of size $n = 2^{d}$ according to the parity of their binary representations then the following equations are satisfied:
	\begin{align*}
		x_1 + \cdots + x_n &= y_1 + \cdots + y_n, \\ 
		x_1^2 + \cdots + x_n^2 &= y_1^2 + \cdots + y_n^2, \\
		\cdots & \quad \cdots \\   
		x_1^d + \cdots + x_n^d &= y_1^d + \cdots + y_n^d.
	\end{align*}
\end{theorem}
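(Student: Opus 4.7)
The plan is a compact generating-function argument. I would begin by introducing the polynomial
\[
P(x) = \prod_{i=0}^{d} \left(1 - x^{2^i}\right).
\]
Expanding the product, every choice of summand from each factor corresponds to a subset $S \subseteq \{0, 1, \dots, d\}$ recording the positions at which $-x^{2^i}$ is taken. The associated monomial is $(-1)^{|S|} x^{k}$, where $k = \sum_{i \in S} 2^{i}$ is the integer whose binary expansion has $1$'s precisely in the positions $S$. Since $S \mapsto k$ is a bijection from subsets of $\{0, \dots, d\}$ onto $\{0, \dots, 2^{d+1}-1\}$, I would obtain
\[
P(x) = \sum_{k=0}^{2^{d+1}-1} (-1)^{s(k)} x^{k},
\]
where $s(k)$ denotes the number of $1$'s in the binary representation of $k$. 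Thus the sign attached to $k$ is $+1$ exactly when $k \in S_0$ and $-1$ exactly when $k \in S_1$.

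Next, I would exploit the factored form: each factor $1 - x^{2^i}$ vanishes at $x = 1$, so $(1-x)^{d+1}$ divides $P(x)$. Consequently $P$ and its first $d$ derivatives all vanish at $x = 1$, which means
\[
\sum_{k=0}^{2^{d+1}-1} (-1)^{s(k)} k(k-1)(k-2) \cdots (k - j + 1) = 0 \qquad \text{for } j = 0, 1, \dots, d.
\]
The falling factorials $k^{\underline{0}}, k^{\underline{1}}, \dots, k^{\underline{j}}$ form a basis, triangular in degree, of the space of polynomials in $k$ of degree at most $j$. Hence the identity above transfers, by a triangular change of basis, to the ordinary powers, yielding $\sum_{k} (-1)^{s(k)} k^{j} = 0$ for $j = 0, 1, \dots, d$. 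Separating terms according to sign produces $\sum_{x \in S_0} x^{j} = \sum_{y \in S_1} y^{j}$ for $j = 1, \dots, d$, exactly as claimed.

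The main obstacle is really just bookkeeping: verifying that the expansion of $\prod(1 - x^{2^i})$ produces precisely the Thue--Morse signs, and making the triangular change of basis from falling factorials to ordinary powers entirely explicit. Both steps are routine. A slightly more elementary alternative would run an induction on $d$ using the recursion $S_0^{(d)} = S_0^{(d-1)} \cup (S_1^{(d-1)} + 2^{d})$, and symmetrically for $S_1^{(d)}$, expanding $(y + 2^{d})^{j}$ binomially to show that each moment difference $\sum_{S_0^{(d)}} x^{j} - \sum_{S_1^{(d)}} x^{j}$ vanishes for $j \leq d$; but the generating-function proof is shorter and more illuminating, so I would favour it.
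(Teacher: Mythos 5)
Your generating-function argument is correct and complete: the identity $\prod_{i=0}^{d}(1-x^{2^i})=\sum_{k=0}^{2^{d+1}-1}(-1)^{s(k)}x^{k}$ (with $s(k)$ the binary digit sum) is exactly right, the divisibility by $(1-x)^{d+1}$ gives the vanishing of the first $d$ derivatives at $x=1$, and the passage from falling factorials to ordinary powers via the triangular change of basis is sound. Note, however, that the paper does not prove this theorem at all --- it is imported as a known number-theoretic fact with a citation to Allouche and Shallit --- so there is no in-paper argument to compare against; your proof is the standard one for the Prouhet--Tarry--Escott/Thue--Morse identity and would serve as a self-contained justification of the cited result.
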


\begin{proof}[Proof of \cref{thm:degreeGH}]
	Given $d \geq 1$, form sets $S_0 = \{x_1, \dots, x_n\}$ and $S_1 = \{y_1, \dots, y_n\}$ as in \cref{thm:ptm}, each of size $n = 2^{2d} > d+1$. Let $a$ and $b$ be vectors of length $n$ formed by ordering the elements of $S_0$ and $S_1$, respectively, in an arbitrary fashion.
	
	Then, by construction, $\norm{a}_i^i = \sum_{j=1}^n x_j^i = \sum_{j=1}^n y_j^i = \norm{b}_i^i$ for all $1 \leq i \leq 2d$ but $\norm{a}_\ell \neq \norm{b}_\ell$ for some $\ell$ by Newton's identities since $S_0 \neq S_1$. Consequently, \cref{lem:2graphs} yields the two desired graphs.
\end{proof}

\section{Conclusion}

Using a the correspondence between (bi)labelled graphs and homomorphism tensors, we reprove known results in a unified way and derive new characterisations of homomorphism indistinguishability over bounded degree trees, graphs of bounded treedepth, graphs of bounded cyclewidth, and graphs of bounded pathwidth. The latter answers an open question from \cite{dell_lovasz_2018}.
The algebraic core of our results are two new variants of a classical theorem by Specht and Wiegmann.

Homomorphism indistinguishabilities over various graph classes can be viewed as similarity measures for graphs, and our new results as well as many previous results show that these are natural and robust. 
Yet homomorphism indistinguishability only yields equivalence relations, or families of equivalence relations, and not a ``quantitative'' distance measure. For many applications of graph similarity, such quantitative measures are needed. Interestingly, we can derive distance measure both from homomorphism indistinguishability and from the equational characterisations we study here. For a class $\mathcal F$ of graphs, we can consider the \emph{homomorphism embedding} that maps graphs $G$ to the vector in $\mathbb R^{\mathcal F}$ whose entries are the numbers  $\hom(F,G)$ for graphs $F\in\mathcal F$. Then a norm on the space $\mathbb R^{\mathcal F}$ induces a graph (pseudo)metric. Such metrics give a generic family of graph kernels (see \cite{Grohe-x2vec}). On the equational side, a notion like fractional isomorphism induces a (pseudo)metric on graphs where the distance between graphs $G$ and $H$ is $\min_X\left\|X \boldsymbol{A}_G-\boldsymbol{A}_HX\right\|$, where $X$ ranges over all doubly stochastic matrices. It is a very interesting question whether the correspondence between the equivalence relations for homomorphism indistinguishability and feasibility of the systems of equations can be extended to the associated metrics. In the special case of isomorphism and homomorphism indistinguishability over all graphs, the theory of graph limits provides some answers~\cite{lovasz_large_2012}. This has recently been extended to fractional isomorphism and homomorphism indistinguishability over trees~\cite{Boker21}.

\section*{Acknowledgements}

We thank Andrei Bulatov for many fruitful discussions on the foundations of theory developed here and its relation to the algebraic theory of valued constraint satisfaction problems.  Moreover, we thank Jan Böker for discussions about linear systems of equations and bilabelled graphs. 
Finally, we thank the anonymous reviewers for suggestions for improvement.

\providecommand{\bysame}{\leavevmode\hbox to3em{\hrulefill}\thinspace}
\providecommand{\MR}{\relax\ifhmode\unskip\space\fi MR }
\providecommand{\MRhref}[2]{\href{http://www.ams.org/mathscinet-getitem?mr=#1}{#2}
}
\providecommand{\href}[2]{#2}

\bibliographystyle{amsplain}

\begin{aicauthors}
	\begin{authorinfo}[magr]
		Martin Grohe \orcidlink{0000-0002-0292-9142}\\
		RWTH Aachen University\\
		Ahornstra\ss{}e 55\\
		52074 Aachen\\
		Germany \\
		grohe\imageat informatik\imagedot rwth-aachen\imagedot de
	\end{authorinfo}
	\begin{authorinfo}[gara]
		Gaurav Rattan \orcidlink{0000-0002-5095-860X}\\
		University of Twente\\
		Zilverling 4029\\
		Hallenweg 19\\
		Enschede\\
		7522 NH\\
		The Netherlands\\
		g\imagedot rattan\imageat utwente\imagedot com
	\end{authorinfo}
	\begin{authorinfo}[tise]
		Tim Seppelt \orcidlink{0000-0002-6447-0568}\\
		IT-Universitetet i K\o{}benhavn\\
		Rued Langgaards Vej 7\\
		2300 K\o{}benhavn S\\
		Denmark\\
		tise\imageat itu\imagedot dk
	\end{authorinfo}
\end{aicauthors}

\end{document}